\numberwithin{equation}{section}
\newtheorem{assumption}[theorem]{Assumption}
 \newtheorem{remark}[theorem]{Remark}
\newcommand{\ba}{\begin{array}}
\newcommand{\ea}{\end{array}}
\newcommand{\bit}{\begin{itemize}}
\newcommand{\eit}{\end{itemize}}
\newcommand{\be}{\begin{equation}}
\newcommand{\ee}{\end{equation}}
\newcommand{\bea}{\begin{eqnarray}}
\newcommand{\eea}{\end{eqnarray}}
\newcommand{\st}{\mathrm{s.t.}}
\newcommand{\argmin}{\mathop{\mathrm{arg\,min}}}
\newcommand{\by}{\mathrm{\bf y}}
\newcommand{\bx}{\mathbf{x}}
\newcommand{\bW}{\mathbf{W}}
\newcommand{\Rmn}[1]{\uppercase\expandafter{\romannumeral#1}}
\newcommand{\Pcal}{\mathcal{P}}
\numberwithin{equation}{section}
\newcommand{\Mcal}{\mathcal{M}}
\newcommand{\grad}{\mathrm{grad}}
\newcommand{\R}{\mathbb{R}}
\numberwithin{theorem}{section}
\newcommand{\iprod}[2]{\left \langle #1, #2 \right \rangle }
\title{Achieving consensus over compact submanifolds}
\author{Jiang Hu\thanks{Massachusetts General Hospital and Harvard Medical School, Harvard University, Boston, MA 02114
(\email{hujiangopt@gmail.com}).}
    \and Jiaojiao Zhang\thanks{Division of Decision and Control Systems, KTH Royal Institute of Technology Stockholm, Sweden (\email{jiaoz@kth.se}).}
  \and  Kangkang Deng\thanks{Corresponding author. Department of Mathematics,  National University of Defense Technology, Changsha, 410073,
China (\email{freedeng1208@gmail.com}).}
}
\begin{document}

\maketitle

\begin{abstract}
We consider the consensus problem in a decentralized network, focusing on a compact submanifold that acts as a nonconvex constraint set. By leveraging the proximal smoothness of the compact submanifold, which encompasses  the local singleton property and the local Lipschitz continuity of the projection operator on the manifold, and establishing the connection between the projection operator and general retraction, we show that the Riemannian gradient descent with a unit step size has locally linear convergence if the network has a satisfactory level of connectivity. 
Moreover, based on the geometry of the compact submanifold, we prove that a convexity-like regularity condition, referred to as the restricted secant inequality, always holds in an explicitly characterized neighborhood around the solution set of the  nonconvex consensus problem. By leveraging this restricted secant inequality and imposing a weaker connectivity requirement on the decentralized network, we present a comprehensive analysis of the linear convergence of the Riemannian gradient descent, taking into consideration appropriate initialization and step size. 
Furthermore, if the network is well connected, we demonstrate that the local Lipschitz continuity endowed by proximal smoothness is a sufficient condition for the restricted secant inequality, thus contributing to the local error bound. We believe that our established results will find more application in the consensus problems over a more general proximally smooth set. Numerical experiments are conducted to validate our theoretical findings.

\end{abstract}
\begin{keywords}
Consensus, compact submanifold, 
restricted secant inequality, proximal smoothness, linear convergence
\end{keywords}

\begin{AMS}
  90C06, 90C22, 90C26, 90C56
\end{AMS}

\section{Introduction}
In a decentralized system, a group of agents collaborates to minimize a global loss function through neighborhood communication. Consensus serves as a fundamental aspect of decentralized optimization, aiming to make all agents agree on a common state.
Consider an undirected graph denoted as $\mathcal{G} = [\mathcal{V}, \mathcal{E}]$, where $\mathcal{V}$ represents the set of agents in the graph and $\mathcal{E}$ represents the set of edges. We define two agents as neighbors if they are connected by an edge. The problem of decentralized consensus strives to achieve a state of agreement among all nodes through local computation and communication with neighbors. To facilitate this communication among nodes, we introduce a mixing matrix $W$, where $W_{ij} > 0$ indicates the existence of an edge between nodes $i$ and $j$. We assume that $W_{ii} > 0$ holds true, implying that agent $i$ can always communicate with itself.

Consensus over a manifold has gained substantial attention in the optimization and control community over the past decade \cite{markdahl2020high, sarlette2009consensus, tron2012riemannian}. Mathematically, it can be formulated as follows:
\be \label{prob:original}
\begin{aligned} 
  \min_{\bx} \quad & \varphi^t(\bx):= \frac{1}{4}\sum_{i=1}^N \sum_{j=1}^N W_{ij}^t \|x_i - x_j\|^2, \\
  \st \quad & x_i \in \Mcal, \;\; \forall i=1,2,\ldots, N,
\end{aligned}  
\ee 
where $N$ is the number of agents, $\bx = [x_1^\top, x_2^\top, \ldots, x_N^\top]^\top \in \R^{(Nd) \times r}$, $W \in \R^{N\times N}$ is the mixing matrix, $t>0$ is an integer, $W^t$ is the $t$-th power of $W$, $W_{ij}^t$ is the $(i,j)$-th element of $W^t$, and $\Mcal$ is a submanifold of $\R^{d\times r}$. 

The consensus over a manifold has garnered significant attention as a crucial element in decentralized manifold optimization methods, such as  decentralized principal component analysis \cite{shah2017distributed,ye2021deepca,chen2021decentralized,gang2022linearly}, decentralized low-rank matrix completion \cite{mishra2019riemannian,hu2023decentralized,deng2023decentralized}, and decentralized low-dimension subspace learning \cite{mishra2019riemannian,hu2023decentralized,deng2023decentralized}.
Besides, it finds direct applications in diverse areas, including the sensor network \cite{tron2009distributed,paley2009stabilization}, the Lohe model of quantum synchronization \cite{ha2018relaxation,markdahl2018geometric}, and the Kuramoto models \cite{rodrigues2016kuramoto,markdahl2020high}.

\subsection{Literature review}
In the Euclidean setting (i.e., $\Mcal = \R^{d\times r}$), the problem \eqref{prob:original} is convex,  and the gradient descent method exhibits a globally linear convergence rate when the second-largest singular value of $W^t$ is strictly less than $1$ \cite{nedic2018network}. Such convexity and globally linear convergence of consensus algorithms  play a crucial role in the design of various decentralized optimization algorithms \cite{tsitsiklis1986distributed,nedic2009distributed,shi2015extra,yuan2016convergence,xu2015augmented,qu2017harnessing,scutari2019distributed}.

In the case of submanifold (i.e., $\Mcal \ne \R^{d\times r}$  ), there are two formulations for the consensus problem. One follows the formulation in \eqref{prob:original}, while the other is based on the geodesic distance on the manifold, replacing $\|x_i - x_j \|^2$ with ${\rm dist}^2(x_i,x_j)$, where ${\rm dist}$ represents the geodesic distance \cite{tron2012riemannian}.
According to \cite{tron2012riemannian}, these two formulations are referred to as the extrinsic and intrinsic approaches, respectively. By leveraging the geometric properties of the manifold, the Riemannian gradient descent \cite{absil2009optimization,hu2020brief,boumal2023introduction} is a popular method for solving the consensus problem. However, for the intrinsic consensus problem, the computational cost is high due to the involvement of both exponential and logarithmic mappings on the manifold in the Riemannian gradient update \cite{tron2012riemannian}. Although the Riemannian gradient descent can achieve consensus where all agents converge to the same point, it is unclear whether it guarantees a linear convergence rate.
On the other hand, the extrinsic consensus is initially considered in \cite{sarlette2009consensus,markdahl2020high} for Stiefel-like manifolds, such as the special orthogonal group and the Grassmann manifold. In comparison to the intrinsic consensus, the computational cost of performing one-step Riemannian gradient descent for the extrinsic consensus problem is significantly reduced as there is no need to calculate the logarithmic mapping. By utilizing the proximal smoothness of the compact submanifold, the projected gradient descent with a unit step size has been proven to converge linearly \cite{deng2023decentralized} in a neighborhood of optimal solution. However, it is not yet clear whether the Riemannian gradient descent exhibits a (local) linear convergence rate in this context.

For the case of $\Mcal$ being the Stiefel manifold, the work \cite{chen2021local} shows that Riemannian gradient descent converges linearly when solving the extrinsic consensus \eqref{prob:original} with proper initialization and suitable step size. They establish the restricted secant inequality in a neighborhood around the global optima by digging into the geometry of the Stiefel manifold and the doubly stochastic property of $W$. It should be noticed that their analysis relies on the specific structure of the Stiefel manifold and is not directly applicable to general submanifolds. Besides, it remains unclear whether the unit step size is acceptable. 

\subsection{Contribution}
We study the regularity conditions around the global optima and the locally linear convergence of Riemannian gradient descent for solving \eqref{prob:original}. The comparisons of our results with the existing works are summarized in Table \ref{tab:comparison}. Specifically, our contributions are as follows: 
\begin{itemize}
    \item With the local Lipschitz continuity of the projection on $\Mcal$ endowed by the proximal smoothness of the compact submanifold and the connection between projection and general retractions in Lemma \ref{lem:lip-proj}, we show that the Riemannian gradient descent with a unit step size has locally linear convergence. In particular, we give the explicit characterization of the local neighborhood and show all the iterates stay in the neighborhood under certain conditions, which enables us to use the local Lipschitz continuity to obtain the convergence. 
    
    \item We establish in Lemma \ref{lem:conseus-bound} the isometric property between the Euclidean consensus error and the manifold consensus error, which are defined by the distances to the Euclidean mean and the induced arithmetic mean, respectively. By leveraging a combination of the Lipschitz-type inequalities of the projection operator and the retraction operator, the normal inequality, and the spectrum of $W$, we show the validity of the restricted secant inequality within a precisely characterized neighborhood for problem \eqref{prob:original} with any $t \geq 1$ in Theorem \ref{thm:rsi}. By such restricted secant inequality, we then present the linear convergence analysis of Riemannian gradient descent with appropriate initialization and step size.

    \item We also show that the restricted secant inequality of \eqref{prob:original} can be derived directly from the local Lipschitz continuity of the projection on $\Mcal$ when $t$ is large enough. Although the restricted secant inequality of \eqref{prob:original} holds for all $t \geq 1$, such a result provides a more streamlined analysis of the restricted secant property. Additionally, the local error bound of \eqref{prob:original} holds naturally as it is weaker than the restricted secant condition.
    The relations between these regularity conditions are summarized in Figure \ref{fig:compare-regular}, and we anticipate broader applications of this result in the context of consensus over a more general proximally smooth set.
\end{itemize}

\begin{table}[t] 
	\caption{Comparison of methods for solving consensus problem \eqref{prob:original}. Here, RGD and PGD represent Riemannian gradient descent and projected gradient descent, respectively. The ``Convexity" column indicates whether the corresponding consensus problem is convex.  The ``Unit Step Size" column assesses whether the unit step size is acceptable for the linear convergence of the associated method.} \vspace{0.1cm}
	\centering
        \setlength{\tabcolsep}{1mm}{} 
	\begin{tabular}{|p{3cm}|c|c|c|c|}
        \cline{1-5}
         & Constraint set & Method  & Convexity & Unit step size \\
        \cline{1-5}
        Nedi\'{c}, Ozdaglar, and Parrilo \cite{nedic2010constrained}  & convex set & PGD & convex & yes \\  
        \cline{1-5}
        Chen et al. \cite{chen2021local} & Stiefel manifold & RGD & nonconvex & no \\
        \cline{1-5}
        Deng and Hu \cite{deng2023decentralized} & Compact submanifold & PGD & nonconvex & yes \\ 
        \cline{1-5}
        \textbf{This work} & Compact submanifold & RGD & nonconvex & yes \\ 
        \cline{1-5}		
\end{tabular}
\label{tab:comparison}
\end{table}

\usetikzlibrary{matrix}
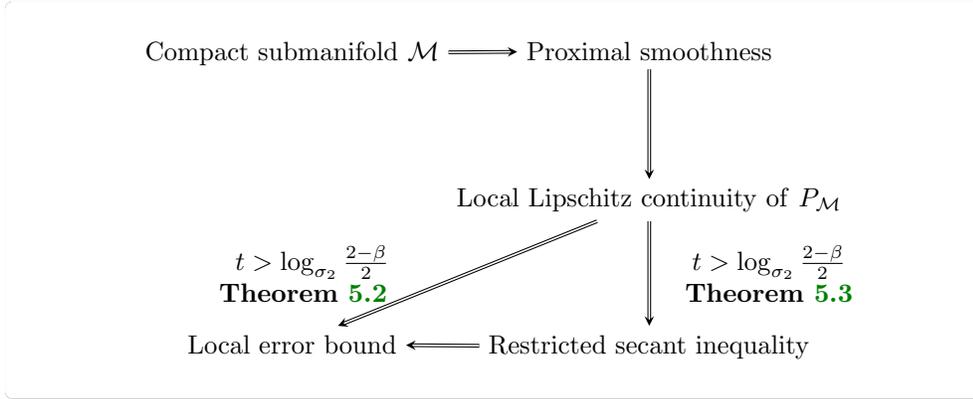
\begin{figure}[t]
\begin{center}
	\begin{tcolorbox}[colback = white, boxrule = {0.01pt}]
            \centering
		\begin{tikzpicture}
			\matrix (m) [matrix of math nodes,row sep=4em,column sep=0em,minimum width=0.5em]
			{
                {\rm Compact~submanifold~}\Mcal  & {\rm Proximal~ smoothness}  \\ 
                & {\rm Local~Lipschitz~continuity~of~} P_{\mathcal{M}} \\
			{\rm Local~error~bound} &  {\rm Restricted~secant~inequality}  \\};
			\path[-stealth]
			(m-3-2) edge [double] node [right] {} (m-3-1)
			(m-2-2) edge [double] node [left, align=center] {$t > \log_{\sigma_2} \frac{2-\beta}{2} \qquad$ \\ \textbf{Theorem \ref{lem:eb2} \qquad}
   } (m-3-1)
			(m-2-2)	edge [double] node [right, align=center] {$\quad t > \log_{\sigma_2} \frac{2-\beta}{2}$ \\
   \quad \textbf{Theorem \ref{thm:Lip-rsi}}} (m-3-2)
                (m-1-1)	edge [double] node [right] {} (m-1-2)
                (m-1-2)	edge [double] node [right] {} (m-2-2);
		\end{tikzpicture}
	\end{tcolorbox}
\end{center} 
\caption{Implications of different concepts of regularity conditions in terms of problem \eqref{prob:original}. Here, $\beta \in (0,2)$ is a given constant and $\sigma_2 < 1$ is the second largest singular value of $W$.}
\label{fig:compare-regular}
\end{figure}

\subsection{Notation} For a positive integer $N$, we denote $[N]=\{1,2,\ldots, N\}$ and $J= \frac{1}{N} \mathbf{1}_N\mathbf{1}_N^\top$ with $\mathbf{1}_N \in \R^N$ being a vector of all entries equal to $1$. For a matrix $x \in \R^{d\times r}$, we denote its Euclidean norm as $\|x\|$. Let $\bW^t= W^t \otimes I_d \in \R^{(nd) \times (nd)}$ for a positive integer $t$, where $\otimes$ is the Kronecker product. For the submanifold $\Mcal \subset \R^{d\times r}$, we always set the Euclidean metric as the Riemannian metric. We denote the tangent space  and the normal space of $\Mcal$ at a point $x$ as $T_x\Mcal$ and $N_x\Mcal$, respectively. For a differentiable function $f:\R^{d\times r} \rightarrow \R$, we denote its Euclidean gradient and Riemannian gradient as $\nabla f(x)$ and $\grad f(x)$, respectively. We denote the $n$-fold Cartesian product of $\Mcal$ as $\Mcal^N = \underbrace{\Mcal \times \cdots \times \Mcal}_{N}$.

\section{Preliminary}
\subsection{Manifold optimization}
Manifold optimization has attracted much attention in the past few decades, as evident in works such as   \cite{absil2009optimization,hu2020brief,boumal2023introduction}. The goal of manifold optimization is to minimize a real-valued function over a manifold, i.e.,
\[ \min_{x \in \Mcal} \quad f(x), \]
where $\Mcal$ is a Riemannian manifold and $f:\Mcal \rightarrow \R$ is a real-valued function. If $\Mcal$ is a submanifold embedded in $\R^{d\times r}$ and the function $f$ can be extended to $\R^{d\times r}$, then the Riemannian gradient of $f$ at $x$ can be computed as $\grad f(x) = P_{T_{x}\Mcal}(\nabla f(x))$, where $P_{T_x\Mcal}$ represents the orthogonal projection onto $T_x \Mcal$. In the design of Riemannian algorithms, an essential concept  is the so-called retraction operator.  A retraction operator $R$ at $x$, denoted as $R_x$, is a mapping from $T_x\Mcal$ to $\Mcal$ that satisfies the following two conditions:
\begin{itemize}
    \item $R_x(0_x) = x$, where $0_x$ is the zero element of $T_x \Mcal$.
    \item $\frac{\rm d}{{\rm d}t}R_{x}(t\xi_x)\mid_{t=0} = \xi_x$ for any $\xi_x \in T_x \Mcal$. 
\end{itemize}
It is well-known that the retraction operator is a generalization of the exponential map \cite{absil2009optimization}. The iterative scheme of a Riemannian algorithm is usually given by
\[ x_{k+1} = R_{x_k}(t_k \eta_k), \]
where $\eta_k \in T_{x_k} \Mcal$ is a descent direction and $t_k > 0$ is a step size. 

For a compact submanifold $\Mcal$, the following Lipschitz-like property on the retraction operators is useful to establish the convergence of Riemannian algorithms. 
\begin{proposition}[\cite{boumal2019global}] \label{prop:lip-retr}
    Let $\Mcal$ be a compact submanifold and $R$ be any retraction operator. Then, there exists a constant $M_0 > 0$ such that, for any $x\in \Mcal$ and $u \in T_{x}\Mcal$, the following property holds 
    \be \label{eq:retr-lip} \| R_{x}(u) - x -u \|\leq M_0\|u\|^2. \ee
\end{proposition}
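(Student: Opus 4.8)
The plan is to reduce the global inequality to a uniform bound on the second-order Taylor remainder of $R$, and then to obtain that uniform bound by combining the smoothness of the retraction with the compactness of $\Mcal$. First I would fix $x \in \Mcal$ and $u \in T_x\Mcal$ and set $g(t) = R_x(tu)$ for $t \in [0,1]$. The two defining properties of a retraction give exactly $g(0) = R_x(0_x) = x$ and $g'(0) = \frac{\rm d}{{\rm d}t}R_x(tu)\mid_{t=0} = u$, so the differential of $R_x$ at the origin is the identity on $T_x\Mcal$. Taylor's theorem with integral remainder then yields
\[ R_x(u) - x - u = g(1) - g(0) - g'(0) = \int_0^1 (1-t)\, g''(t)\, {\rm d}t, \]
where $g''(t) = {\rm D}^2 R_x(tu)[u,u]$. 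Taking norms and using $\int_0^1 (1-t)\,{\rm d}t = \tfrac12$ gives $\|R_x(u) - x - u\| \le \tfrac12 \sup_{t\in[0,1]} \|{\rm D}^2 R_x(tu)[u,u]\|$, reducing everything to a second-derivative estimate.

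The second step is to control $\|{\rm D}^2 R_x(v)[u,u]\|$ uniformly in the base point. The key observation is that $(x,v) \mapsto R_x(v)$ is smooth on the tangent bundle of $\Mcal$, so its second derivative in the fiber variable is continuous. Restricting to the compact set $\Kcal = \{(x,v) : x \in \Mcal,\ \|v\| \le 1\}$, which is compact because $\Mcal$ is compact and hence the closed unit disk bundle is a closed bounded subset of $\Mcal \times \R^{d\times r}$, the continuous function $(x,v) \mapsto \|{\rm D}^2 R_x(v)\|$ attains a finite maximum $C$. Therefore, for every $x$ and every $u$ with $\|u\| \le 1$, we get $\|{\rm D}^2 R_x(tu)[u,u]\| \le C\|u\|^2$ for all $t \in [0,1]$, which combined with the previous bound yields \eqref{eq:retr-lip} with constant $C/2$ on the unit ball.

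It remains to treat $\|u\| > 1$, and here compactness re-enters through boundedness rather than smoothness. Since $\Mcal$ is compact it has finite diameter $\delta = \sup_{y,z\in\Mcal}\|y-z\|$, so $\|R_x(u) - x\| \le \delta$ and consequently
\[ \|R_x(u) - x - u\| \le \|R_x(u) - x\| + \|u\| \le \delta + \|u\| \le (\delta + 1)\|u\|^2, \]
using $\|u\| \le \|u\|^2$ and $1 \le \|u\|^2$ in the regime $\|u\| \ge 1$. Setting $M_0 = \max\{C/2,\ \delta + 1\}$ then establishes the claim for all $x \in \Mcal$ and $u \in T_x\Mcal$. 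I expect the main obstacle to be precisely the uniformity of the constant: the pointwise second-order expansion is immediate from the retraction axioms, but passing from an $x$-dependent remainder bound to one valid simultaneously for all $x$ is exactly what the compactness argument of the second step supplies, and one must ensure that $R$ is genuinely $C^2$ as a joint function of base point and tangent vector so that $\|{\rm D}^2 R_x(v)\|$ is a legitimate continuous function on the compact set $\Kcal$.
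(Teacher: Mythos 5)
The paper does not prove this proposition itself; it is quoted directly from \cite{boumal2019global}, and your argument (second-order Taylor expansion of $t \mapsto R_x(tu)$ plus compactness of the unit disk bundle to get a uniform bound on the remainder, with a separate diameter bound for $\|u\| > 1$) is essentially the standard proof given in that reference. The proposal is correct, and you rightly flag the only real point of care, namely that $R$ must be smooth jointly in $(x,v)$ on the tangent bundle so that the fiberwise second derivative is a continuous function on the compact set $\Kcal$.
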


\subsection{Proximal smoothness} 

The notion of proximal smoothness, as introduced by \cite{clarke1995proximal}, refers to the characteristic of a closed set whereby the nearest-point projection becomes a singleton when the point is in close enough to the set.  This attribute is valuable in algorithmic design and theoretical analysis,  as it imbues nonconvex closed sets with a structure resembling that of convex sets.
 For any positive real number $\gamma$, we define the $\gamma$-tube around $\mathcal{M}$ as $
 U_{\mathcal{M}}(\gamma): = \{x:{\rm dist}(x,\mathcal{M}) <  \gamma\}.$ 
We say a closed set $\mathcal{M}$ is $\gamma$-proximally smooth if the projection operator $\Pcal_{\mathcal{M}}(x):=\argmin_{y \in \Mcal} \|y -x\|^2$ is a singleton whenever $x\in U_{\Mcal}(\gamma)$. Any closed and convex set is proximally smooth for arbitrary $\gamma \in [0, \infty)$. According to \cite[Corollary 4.6]{clarke1995proximal}, a closed set $\Mcal$ is convex if and only if it is proximally smooth with a radius of $\gamma$ for every $\gamma > 0$.
It is worth noting that that any compact $C^2$-submanifold of $\mathbb{R}^{d\times r}$ is a proximally smooth set \cite{clarke1995proximal,balashov2021gradient,davis2020stochastic}. For instance, the Stiefel manifold is a set that is $1$-proximally smooth. Throughout this paper, we assume that $\Mcal$ is $2\gamma$-proximally smooth. By  following the proof in \cite[Theorem 4.8]{clarke1995proximal}, a $2\gamma$-proximally smooth set $\mathcal{M}$ satisfies the following property: for any $\beta \in (0,2)$,
\be \label{eq:lip-proj-alpha}
\left\| \Pcal_{\mathcal{M}} (x) -\Pcal_{\mathcal{M}} (y)\right\| \leq \frac{2}{2 - \beta} \|x - y\|,~~ \forall x,y \in \bar{U}_{\mathcal{M}}(\beta \gamma), 
\ee
where $\bar{U}_{\Mcal}(\beta\gamma):=\{x: {\rm dist}(x,\Mcal) \leq \beta \gamma\}$ is the closure of $U_{\Mcal}(\beta\gamma)$. Moreover, for any point $x \in \mathcal{M}$ and a normal $v \in$ $N_{x} \mathcal{M}$, it holds that
\be \label{eq:normal-bound}
\iprod{v}{y-x} \leq \frac{\|v\|}{4\gamma} \|y-x\|^2, \quad \forall y \in \mathcal{M}, 
\ee
This is often referred to as the normal inequality \cite{clarke1995proximal,davis2020stochastic}. 
It is worth noting that for any closed convex set $\Mcal \subset \R^{d\times r}$, the projection operator $P_{\Mcal}$ is 1-Lipschitz continuous over $\R^{d\times r}$. Additionally,  the inequality \eqref{eq:normal-bound} holds with $\iprod{v}{y-x} \leq 0$. Therefore, the inequalities \eqref{eq:lip-proj-alpha} and \eqref{eq:normal-bound} can be considered as  generalizations from the closed convex set to the proximally smooth set.

\subsection{Euclidean consensus}
Observing that $\varphi^t(\bx) \geq 0$ for any $\bx \in \Mcal^N$, we can determine that the optimal solution set of \eqref{prob:original} is given by
\[ \mathcal{X}:=\{\bx \in \Mcal^N: x_1 =\cdots = x_N\}. \]
When the constraint set $\Mcal$ is convex, the problem \eqref{prob:original} reduces to the classic Euclidean consensus problem, which involves reaching consensus over a convex set. In this case, the projection of a point $\bx\in \Mcal^N$ onto $\mathcal{X}$ can be expressed as:
\[ \hat{\bx} := \argmin_{\by \in \mathcal{X}}\; \|\by - \bx\|^2 = [\hat{x}^\top, \ldots, \hat{x}^\top ]^\top, \]
where $\hat{x} = \frac{1}{N} \sum_{i=1}^N x_i$. 
Note that the Euclidean gradient of $\varphi^t(\bx)$ can be represented as $\nabla \varphi^t(\bx) = [\nabla \varphi^t_1(\bx)^\top, \ldots, \nabla \varphi^t_N(\bx)^\top]^\top =(I - \bW^t)\bx$, where $\nabla \varphi^t_i(\bx)=x_i - \sum_{j=1}W_{ij}^t x_j$.

Throughout the paper, we adopt the following assumptions on the mixing matrix $W$, which are commonly found in the literature, such as \cite{chen2021decentralized, zeng2018nonconvex}.

\begin{assumption} \label{assum-w}
     We assume that the mixing matrix $W$ satisfies the following conditions:
\begin{itemize}
    \item[(i)] $W_{ij}\geq 0$ for any $i, j\in [N]$ and $W_{ij}=0$  if and only if  $(i,j)\not\in {\mathcal E}$.
    \item[(ii)] $W = W^\top$ and $W \mathbf{1}_N = \mathbf{1}_N$.
    \item[(iii)] The null space of $(I-W)$ is $\operatorname{span}(\mathbf{1}_N)$.    
\end{itemize}
\end{assumption}
It should be noted that Assumption \ref{assum-w} implies that the second largest singular value $\sigma_2$ of $W$ lies in the interval $ [0,1)$ \cite{shi2015extra}. 

Consider the projected gradient descent with a unit step size, i.e., $\bx_{k+1} = P_{\Mcal^N}(\bx_k - \nabla \varphi^t(\bx_k)) = P_{\Mcal^N}(\bW^t \bx_k)$ with $P_{\Mcal^N}(\bx_k) := [P_{\Mcal}(\bx_{1,k})^\top, \ldots, P_{\Mcal}(\bx_{N,k})^\top]^\top$. Then, it holds that:
\be \label{eq:linear-ecu-lip} \begin{aligned}
\| \bx_{k+1} - \hat{\bx}_{k+1} \| & \leq \| \bx_{k+1} - \hat{\bx}_{k} \| \\
& = \| P_{\Mcal^N}(\bW^t \bx_k) - \hat{\bx}_k \| \\
& \leq \| \bW^t \bx_k - \hat{\bx}_k \| \\
& = \| (W^t - J)\otimes I_d (\bx_k - \hat{\bx}_k) \| \\
& \leq \sigma_2^t \|\bx_k - \hat{\bx}_k \|,
\end{aligned}
  \ee
where the first inequality follows directly from the definition of $\hat{x}_k$, the second inequality is from the 1-Lipschitz continuity of $P_{\Mcal^N}$, and the last inequality is obtained by the doubly stochastic property of $W$ with $\sigma_2$ being the second-largest singular value of $W$. Hence, the projected gradient descent with the unit step size has a linear convergence  with a rate of $\sigma_2^t$. 

The above analysis for establishing the linear convergence of the projected gradient descent is based on the 1-Lipschitz continuity of the projection operator. Another approach to construct the linear convergence rate in convex optimization is to  establish a certain regularity condition of \eqref{prob:original}, such as the restricted secant inequality, and then use this condition to  prove the convergence rate. In the paper \cite{chen2021local}, the authors establish the following restricted secant inequality:
\be \label{eq:rsi-euc} \iprod{\bx-\hat{\bx}}{\nabla \varphi^t(\bx)} \geq \frac{\mu_t L_t}{\mu_t + L_t}\|\bx - \hat{\bx}\|^2 + \frac{1}{\mu_t + L_t} \| \nabla \varphi^t(\bx) \|^2, \ee
where $L_t = 1 - \lambda_N(W^t)$ and $\mu_t = 1 -\lambda_2 (W^t)$ with $\lambda_N(W^t)$ and $\lambda_2(W^t)$ being the smallest eigenvalue and the second largest eigenvalue, respectively. Then, for the projected gradient descent with the constant step size $\alpha = 2/(\mu_t + L_t)$, i.e., 
 $\bx_{k+1} = P_{\Mcal^N}(\bx_k - \alpha \nabla \varphi^t(\bx_k))$, we have
\be \label{eq:2.6}
\begin{aligned}
    \| \bx_{k+1} - \hat{\bx}_{k+1} \|^2 & \leq \| \bx_{k+1} - \hat{\bx}_k  \|^2 \\
    & = \| P_{\Mcal^N}(\bx_{k} - \alpha \nabla \varphi^t(\bx_k)) -\hat{\bx}_k \|^2 \\ 
    & \leq \| \bx_{k} - \alpha \nabla \varphi^t(\bx_k) - \hat{\bx}_k \|^2 \\
    & = \| \bx_{k} - \hat{\bx}_k \|^2 + \alpha^2  \|\nabla \varphi^t(\bx_k)\|^2 - 2\alpha \iprod{\bx_{k} - \hat{\bx}_k}{\nabla \varphi^t(\bx_k)} \\
    & \leq \left( 1- \frac{2\alpha \mu_t L_t}{\mu_t + L_t} \right) \|\bx_k - \hat{\bx}_k \|^2 + \left( \alpha^2 - \frac{2\alpha}{\mu_t + L_t} \right) \| \nabla \varphi^t(\bx_k) \|^2 \\
    & =  \left(\frac{L_t - \mu_t}{L_t + \mu_t}\right)^2 \| \bx_k - \hat{\bx}_k \|^2.
\end{aligned} \ee
Here, the second inequality follows from the 1-Lipschitz continuity of $P_{\Mcal^N}$, and the last inequality is derived from the restricted secant inequality \eqref{eq:rsi-euc}. From \eqref{eq:2.6}, we conclude that the projected gradient descent converges linearly with a rate of $(L_t-\mu_t)/(L_t+\mu_t)$. If we set the step size to $1$, it can be shown that \cite[Appendix]{chen2021local}
\[ \| \bx_{k+1} - \hat{\bx}_{k+1} \| \leq \sigma_2^t \| \bx_{k} - \hat{\bx}_k \|. \]
This is consistent with the linear convergence  rate of $\sigma_2$ given by \eqref{eq:linear-ecu-lip}. It is worth noting that $(L_t-\mu_t)/(L_t+\mu_t) \leq \sigma_2^t$, indicating that
the analysis based on the restricted secant inequality provides a faster convergence rate with a step size of $\alpha = 2/(L_t + \mu_t)$. 

\section{Locally linear convergence of Riemannian gradient descent by local Lipschitz continuity} \label{sec:Lip}
The presence of a nonlinear and nonconvex manifold constraint in problem \eqref{prob:original} presents challenges when attempting to establish the global 1-Lipschitz continuity of $P_{\Mcal}$ and the restricted secant inequality, as depicted in \eqref{eq:rsi-euc}. Additionally, we have opted to employ the widely used Riemannian gradient descent instead of the projected gradient descent. It is worth noting that the projection operator can be viewed as a specific type of retraction \cite{absil2012projection}. Consequently, further exploration of the connections between general retraction operators and the projection operator is necessary.

Consider the Riemannian gradient descent method with a unit step size given by 
\be \label{eq:rgd} \bx_{k+1} = R_{\bx_k}(- \grad \varphi^t(\bx_k)), \ee
where $R_{\bx_k}(- \grad \varphi^t(\bx_k)):= [R_{x_{1,k}}(- \grad \varphi_1^t(\bx_k))^\top,\cdots,R_{x_{N,k}}(- \grad \varphi_N^t(\bx_k))]^\top$. 
For a set of $N$ points ${ x_1, \ldots, x_N } \subset \Mcal$, the induced arithmetic mean on the manifold is defined as
\[ \bar{x} := \argmin_{y \in \Mcal} \quad \sum_{i=1}^N \|y - x_i\|^2. \]
Through direct calculation, it can be shown that $\bar{x} = P_{\Mcal}(\hat{x})$ and $\bar{x}$ is a singleton if $\hat{x} \in U_{\Mcal} (2\gamma)$. Moreover, the projection of a point $\bx \in \mathbb{R}^{(Nd) \times r}$ onto $\mathcal{X}$ is given by
\[ \bar{\bx} = [\bar{x}^\top, \ldots, \bar{x}^\top]^\top.   \]

In \cite{deng2023decentralized}, it has been demonstrated that the projected gradient descent with a unit step size exhibits linear convergence under suitable initialization and for sufficiently large $t$. Let us now revisit the linear convergence result of the projected gradient descent as presented in \cite{deng2023decentralized}. The projected gradient update with the unit step size is given by 
\be\label{eq:consensus-pg-iter}
\bx_{k+1} = P_{\Mcal^N }(\bx_k - \nabla \varphi^t(\bx_k)).
\ee
 According to \cite[Theorem 3.1]{deng2023decentralized}, it is established that for any $\bx_k$ satisfying $\|\bar{x}_k - \hat{x}_k \| \leq \gamma/2$ and an appropriate value of $t$, the following inequality holds:
\be \label{eq:linear-pgd} \begin{aligned}
    \|\bx_{k+1} - \bar{\bx}_{k+1}\|  & \leq \|\bx_{k+1} - \bar{\bx}_k \|  = \| P_{\Mcal^N}(\bW^t \bx_k ) - \bar{\bx}_k \| \\ & \leq 2 \| \bW^t \bx_k - \hat{\bx}_k \| \leq 2 \sigma_2^t \| \bx_k - \bar{\bx}_k \|,
\end{aligned} \ee
where the first inequality is from the definition of $\bar{x}_k$, the second inequality is a consequence of the 2-Lipschitz continuity of  $P_{\Mcal}$ within $\bar{U}_{\Mcal} (\gamma)$, and the final equality results from the assumption regarding $W$. Consequently, if $2\sigma_2^t <1$, the sequence $\{x_k\}$ exhibits linear convergence towards the optimal solution set of problem \eqref{prob:original}.

The crux of the aforementioned analysis lies in the Lipschitz continuity of the projection operator $P_{\Mcal}$. To analyze the Riemannian gradient descent with such Lipschitz continuity, it is crucial to establish a relationship between a general retraction operator and the projection operator.
 
\begin{lemma} \label{lem:lip-proj}
    Let $R$ be any retraction on $\Mcal$. For any $x \in \mathcal{M}$ and $u \in \mathbb{R}^{d \times r}$, there exists a positive constant $M_1$ such that
    \be \label{eq:lip-proj}
    \left\| P_{\mathcal{M}}(x+u)- R_x(P_{T_x \mathcal{M}}(u))\right\| \leq M_1 \|u\|^2. 
    \ee
\end{lemma}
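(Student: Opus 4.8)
The plan is to decompose the perturbation at $x$, writing $u = u_T + u_N$ with $u_T := P_{T_x\Mcal}(u)$ and $u_N := P_{N_x\Mcal}(u)$, and then to interpose the point $x + u_T$ through the triangle inequality
\[ \|P_{\Mcal}(x+u) - R_x(u_T)\| \le \underbrace{\|P_{\Mcal}(x+u) - P_{\Mcal}(x+u_T)\|}_{\text{(I)}} + \underbrace{\|P_{\Mcal}(x+u_T) - R_x(u_T)\|}_{\text{(II)}}. \]
I would bound each term by $O(\|u\|^2)$. I work in the regime where $\|u\|$ is small enough that all relevant points lie in the tube $\bar U_{\Mcal}(\beta\gamma)$, so that the projections are singletons and \eqref{eq:lip-proj-alpha} applies; since $\Mcal$ is a compact $C^2$ submanifold, every geometric constant invoked below (the reach $2\gamma$, the Lipschitz modulus $L$ of the map $x \mapsto P_{T_x\Mcal}$, the local second-order bound on $\Mcal$, and the retraction constant $M_0$ of Proposition \ref{prop:lip-retr}) can be taken uniform in $x$, yielding a single $M_1$.

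Term (II) compares two retractions of the \emph{same} tangent vector $u_T$. Proposition \ref{prop:lip-retr} gives $\|R_x(u_T) - (x+u_T)\| \le M_0\|u_T\|^2$. For the projective retraction $u_T \mapsto P_{\Mcal}(x+u_T)$ I would use the local graph representation of $\Mcal$ over $T_x\Mcal$: near $x$ one has $\Mcal = \{x + \tau + \phi(\tau) : \tau \in T_x\Mcal\}$ with $\phi(0)=0$, $D\phi(0)=0$, and $\|\phi(\tau)\| \le c\|\tau\|^2$. Minimizing $\|u_T - \tau\|^2 + \|\phi(\tau)\|^2$ shows the minimizer obeys $\tau^\ast = u_T + O(\|u_T\|^3)$, so $\|P_{\Mcal}(x+u_T) - (x+u_T)\| = O(\|u_T\|^2)$. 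Adding the two estimates bounds (II) by $C\|u\|^2$.

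Term (I) is the main obstacle: the inputs $x+u$ and $x+u_T$ differ by $u_N$, which is only first order, so the bare Lipschitz estimate \eqref{eq:lip-proj-alpha} yields merely $O(\|u\|)$. The key is to exploit that $u_N$ is normal at $x$ and that the nearby projection point is almost on its normal fiber. Let $p := P_{\Mcal}(x+u_T)$ and $r := (x+u_T) - p \in N_p\Mcal$, and note $\|p - x\| \le \tfrac{2}{2-\beta}\|u_T\| = O(\|u\|)$. Because $x \mapsto P_{T_x\Mcal}$ is Lipschitz on the compact $C^2$ manifold, the tangential-at-$p$ part of the normal-at-$x$ vector $u_N$ is second order: writing $\delta := P_{T_p\Mcal}(u_N) = (P_{T_p\Mcal} - P_{T_x\Mcal})(u_N)$ we get $\|\delta\| \le L\|p-x\|\,\|u_N\| = O(\|u\|^2)$. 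Setting $n := P_{N_p\Mcal}\big((x+u)-p\big) = r + P_{N_p\Mcal}(u_N)$, one has $(x+u)-p = n + \delta$ with $n \in N_p\Mcal$, $\|n\| = O(\|u\|)$, and $\|\delta\| = O(\|u\|^2)$. For $q' := p + n$, proximal smoothness forces $P_{\Mcal}(q') = p$ (a point on the normal fiber within the reach projects to its base point), so \eqref{eq:lip-proj-alpha} gives $\|P_{\Mcal}(x+u) - p\| = \|P_{\Mcal}(x+u) - P_{\Mcal}(q')\| \le \tfrac{2}{2-\beta}\|(x+u)-q'\| = \tfrac{2}{2-\beta}\|\delta\| = O(\|u\|^2)$. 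This controls (I), and summing (I) and (II) establishes \eqref{eq:lip-proj}.
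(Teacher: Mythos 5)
Your proof is correct, and it shares the paper's high-level skeleton --- both arguments interpose the point $x + P_{T_x\Mcal}(u)$ and invoke Proposition \ref{prop:lip-retr} to handle the retraction --- but it differs in a substantive way: the paper's entire proof of the hard half is a citation of \cite[Lemma 4.3]{deng2023decentralized}, which asserts $\| P_{\Mcal}(x+u) - x - P_{T_x\Mcal}(u)\| \le Q\|u\|^2$ as a black box, whereas you re-derive exactly this estimate from first principles. Your term (II) (via the local graph representation $\Mcal = \{x+\tau+\phi(\tau)\}$ with $\|\phi(\tau)\| \le c\|\tau\|^2$, which immediately gives $\mathrm{dist}(x+u_T,\Mcal) \le c\|u_T\|^2$) together with your term (I) (the normal-fiber argument: the tangential-at-$p$ component of $u_N$ is second order because $y \mapsto P_{T_y\Mcal}$ is Lipschitz, and a normal vector within the reach projects back to its base point) reconstitutes the cited lemma in full. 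What your route buys is self-containedness and a clear picture of \emph{why} the projection agrees with the tangent-space projection to second order; what the paper's route buys is brevity. Two minor remarks: first, you only need the crude bound $\|P_{\Mcal}(x+u_T)-(x+u_T)\| \le \|\phi(u_T)\|$, so the refinement $\tau^\ast = u_T + O(\|u_T\|^3)$ is unnecessary (and as stated it requires the first-order optimality condition, not just comparison of objective values); second, you restrict to $\|u\|$ small enough that all points lie in $\bar U_{\Mcal}(\beta\gamma)$ and projections are single-valued --- for the lemma as stated (arbitrary $u$) one should add the trivial observation that for $\|u\| \ge \epsilon$ the left-hand side is bounded by $\mathrm{diam}(\Mcal) \le (\mathrm{diam}(\Mcal)/\epsilon^2)\|u\|^2$ by compactness, a gap the paper's citation-based proof shares implicitly.
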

\begin{proof}
    According to \cite[Lemma 4.3]{deng2023decentralized}, there exists a positive constant $Q > 0$ such that for any $x \in \Mcal$ and $u \in \R^{d\times r}$, the following inequality holds:
    \[ \| P_{\Mcal}(x + u) - x - P_{T_x \Mcal}(u) \| \leq Q \|u\|^2. \]
   Taking into account the Lipschitz-like result of the retraction operator stated in Proposition \ref{prop:lip-retr}, we can derive the following:
    \[ \begin{aligned}
        \left\| P_{\mathcal{M}}(x+u)- R_x(\mathcal{P}_{T_x \mathcal{M}}(u))\right \| & \leq \left\| x - P_{T_x \Mcal}(u) - R_x(\mathcal{P}_{T_x \mathcal{M}}(u))\right \| + Q\|u\|^2  \\
        & \leq M_0 \| P_{T_x\Mcal} (u)\|^2 + Q\|u\|^2 \\
        & = (M_0 + Q) \| u \|^2.
    \end{aligned}
      \]
    This completes the proof by setting $M_1 = M_0 + Q$. 
\end{proof}

In contrast to the Euclidean consensus, the Lipschitz continuity of $P_{\Mcal}$, as described by \eqref{eq:lip-proj}, is only locally valid around $\Mcal$.  
Therefore, it becomes necessary to impose constraints on the initialization and $t$ to ensure that the Riemannian gradient descent with a unit step size remains within a small neighborhood.
For $\bx =[x_1^\top, \ldots, x_N^\top]^\top \in \R^{(Nd) \times r}$, let $\|\bx\|_{F, \infty} = \max_{i\in [N]} \|x_i\|$.  We begin by introducing the following lemma, which establishes a connection between the Riemannian gradient  $\sum_{i=1}^N \grad \varphi^t_i(\bx)$ and the consensus error $\|\bx_k - \bar{\bx}_k\|$. 

\begin{lemma} \label{lem:grad-consensus-sum2}
For any $\bx \in \Mcal^n$, it holds that
\be \label{eq:consensus-grad-sum2}
\left\| \sum_{i=1}^N \grad \varphi_i^t(\bx) \right \| \leq 2 \sqrt{N} L_p \|\bx - \bar{\bx} \|_{F,\infty} \|\bx - \bar{\bx}\|,  \ee
where $L_p := \max_{x \in {\rm conv}(\Mcal)} \| D P_{T_x\Mcal}(\cdot) \|_{\rm op}$ and $\|\cdot \|_{\rm op}$ is the operator norm.
\end{lemma}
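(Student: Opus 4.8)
The plan is to exploit the key algebraic fact that the \emph{Euclidean} gradients sum to zero. Since Assumption~\ref{assum-w} makes $W$ symmetric with $W\mathbf{1}_N = \mathbf{1}_N$, the matrix $W^t$ is doubly stochastic as well, so that $\sum_{i=1}^N \nabla \varphi_i^t(\bx) = \sum_i x_i - \sum_j \big(\sum_i W_{ij}^t\big) x_j = 0$. This vanishing is what lets the sum of the \emph{Riemannian} gradients be controlled by the consensus error rather than by the raw gradient magnitude.

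Building on this, I would first rewrite the target quantity as a sum of projector differences. Using $\grad \varphi_i^t(\bx) = P_{T_{x_i}\Mcal}(\nabla \varphi_i^t(\bx))$ together with the linearity of the orthogonal projector $P_{T_{\bar{x}}\Mcal}$ and the identity $\sum_i P_{T_{\bar{x}}\Mcal}(\nabla \varphi_i^t(\bx)) = P_{T_{\bar{x}}\Mcal}\big(\sum_i \nabla \varphi_i^t(\bx)\big) = P_{T_{\bar{x}}\Mcal}(0) = 0$, I can subtract this zero term to obtain
\be
\sum_{i=1}^N \grad \varphi_i^t(\bx) = \sum_{i=1}^N \left( P_{T_{x_i}\Mcal} - P_{T_{\bar{x}}\Mcal}\right)\!\left(\nabla \varphi_i^t(\bx)\right).
\ee
This replaces the global sum by $N$ localized terms, each measuring how much the tangent-space projector at $x_i$ differs from the one at the manifold mean $\bar{x}$.

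Next I would bound each summand using two ingredients. For the projector difference, I would apply the mean value inequality to the operator-valued map $x \mapsto P_{T_x\Mcal}$ along the Euclidean segment from $\bar{x}$ to $x_i$ (which lies in ${\rm conv}(\Mcal)$), which gives $\left\|(P_{T_{x_i}\Mcal} - P_{T_{\bar{x}}\Mcal})(v)\right\| \leq L_p \|x_i - \bar{x}\|\,\|v\|$ directly from the definition of $L_p$. For the gradient factor, I would use row stochasticity $\sum_j W_{ij}^t = 1$ to write $\nabla \varphi_i^t(\bx) = (x_i - \bar{x}) - \sum_j W_{ij}^t (x_j - \bar{x})$ and conclude, by the triangle inequality and the convexity combination $\sum_j W_{ij}^t = 1$, that $\|\nabla \varphi_i^t(\bx)\| \leq 2\|\bx - \bar{\bx}\|_{F,\infty}$. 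Combining the two estimates and then applying Cauchy--Schwarz in the form $\sum_i \|x_i - \bar{x}\| \leq \sqrt{N}\,\|\bx - \bar{\bx}\|$ yields exactly the claimed bound $2\sqrt{N}\,L_p \,\|\bx - \bar{\bx}\|_{F,\infty}\,\|\bx - \bar{\bx}\|$.

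The main obstacle is the projector Lipschitz estimate: it requires that the map $x \mapsto P_{T_x\Mcal}$ admit a $C^1$ extension to the compact convex hull ${\rm conv}(\Mcal)$, so that $L_p$ is finite and the mean value inequality may be invoked along the straight segment joining $\bar{x}$ and $x_i$ (note both endpoints lie on $\Mcal$, hence the segment lies in ${\rm conv}(\Mcal)$). This is precisely where the compactness and the $C^2$ (proximally smooth) structure of $\Mcal$ enter; once this regularity is secured, the remaining steps are elementary norm manipulations.
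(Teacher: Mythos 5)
Your proposal is correct and follows essentially the same route as the paper's proof: both exploit $\sum_i \nabla\varphi_i^t(\bx)=0$ to rewrite the sum as $\sum_i (P_{T_{x_i}\Mcal}-P_{T_{\bar{x}}\Mcal})(\nabla\varphi_i^t(\bx))$, bound each projector difference by $L_p\|x_i-\bar{x}\|$, and finish with the triangle inequality, row stochasticity, and $\sum_i\|x_i-\bar{x}\|\leq\sqrt{N}\|\bx-\bar{\bx}\|$. The only (immaterial) difference is that you bound $\|\nabla\varphi_i^t(\bx)\|$ by $2\|\bx-\bar{\bx}\|_{F,\infty}$ and keep $\|x_i-\bar{x}\|$ inside the sum, whereas the paper does the reverse; both yield the identical constant.
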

\begin{proof}
    Note that $\sum_{i=1}^N \nabla \varphi_i^t(\bx) = \sum_{i=1}^N \left[  x_i - \sum_{j=1}^N W_{ij}^t x_j \right] = 0$. It holds that
    \[ \begin{aligned}
        \left\| \sum_{i=1}^N \grad \varphi_i^t(\bx) \right \| & = \left \|  \sum_{i=1}^N \left[ P_{T_{x_i}\Mcal}(\nabla \varphi_i^t(\bx)) - P_{T_{\bar{x}}\Mcal}(\nabla \varphi_i^t(\bx)) \right]   \right \| \\
        & \leq L_p \sum_{i=1}^N \left[ \| x_i - \bar{x}\| \|\nabla \varphi_i^t (\bx)\| \right] \\
        & \leq L_p \|\bx - \bar{\bx} \|_{F,\infty} \sum_{i=1}^N \| x_i - \sum_{j=1} W_{ij}^tx_j \| \\
        & \leq L_p \|\bx - \bar{\bx} \|_{F,\infty} \left[ \sum_{i=1}^N \| x_i -\bar{x}\| + \| \sum_{j=1} W_{ij}^tx_j -\bar{x} \| \right]\\
        & \leq 2 L_p \|\bx - \bar{\bx} \|_{F,\infty}\sum_{i=1}^N \| x_i - \bar{x} \| \\
        & \leq 2 \sqrt{N} L_p \|\bx - \bar{\bx} \|_{F,\infty} \|\bx - \bar{\bx}\|,
    \end{aligned}\]
    where the first inequality is due to the Lipschitz continuity of $P_{T_{x}\Mcal}(\cdot)$ over $x \in \Mcal$, the third inequality follows from the triangle inequality of $\|\cdot \|$, the fourth inequality uses the convexity property of $\|\cdot\|$, and the final inequality is derived from the inequality $\|a\|_1 \leq \sqrt{N}\|a\|$ that holds for any $a \in \R^{N}$. 
\end{proof}

It is worth noting that a tighter result of \eqref{eq:consensus-grad-sum2}, namely $\left\| \sum_{i=1}^N \grad \varphi_i^t(\bx) \right \| \leq L_t \|\bx - \bar{\bx}\|^2$, has been demonstrated in the specific scenario where $\Mcal$ corresponds to the Stiefel manifold, as shown in \cite[Lemma 10]{chen2021local}.   In contrast, in the general case of $\Mcal$, the explicit formulation of the projection operator remains unknown.

In addition, we require the utilization of the following lemma that pertains to the control of the Euclidean mean and the manifold mean, as presented in \cite[Lemma 4.4]{deng2023decentralized}.
\begin{lemma}[{\cite[Lemma 4.4]{deng2023decentralized}}] \label{lem:dist-consensus}
    There exists $M_2 > 0$ such that for any $\bx \in \Mcal^N$, 
    \be \| \hat{x} - \bar{x} \| \leq M_2 \frac{\| \bx - \bar{\bx} \|^2}{N}. \ee
\end{lemma}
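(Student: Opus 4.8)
The plan is to exploit the normal inequality \eqref{eq:normal-bound}, which is precisely the structural consequence of $2\gamma$-proximal smoothness tailored to this kind of estimate. Write $v := \hat{x} - \bar{x}$. Since $\bar{x} = P_{\Mcal}(\hat{x})$, the first-order optimality of the nearest-point projection onto the proximally smooth set $\Mcal$ guarantees that the displacement $v$ lies in the normal space $N_{\bar{x}}\Mcal$; this is the standard characterization of the projection, valid once $\hat{x}$ lies in the tube $U_{\Mcal}(2\gamma)$ where $\bar{x}$ is a singleton.

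With $v \in N_{\bar{x}}\Mcal$ in hand, I would apply the normal inequality \eqref{eq:normal-bound} at the base point $\bar{x}$ with the test point $y = x_i \in \Mcal$ for each $i \in [N]$, and then sum over $i$. The left-hand side collapses cleanly: since $\hat{x}$ is the Euclidean mean, $\sum_{i=1}^N (x_i - \bar{x}) = N\hat{x} - N\bar{x} = Nv$, so $\sum_{i=1}^N \iprod{v}{x_i - \bar{x}} = N\|v\|^2$. The summed right-hand side equals $\frac{\|v\|}{4\gamma}\sum_{i=1}^N \|x_i - \bar{x}\|^2 = \frac{\|v\|}{4\gamma}\|\bx - \bar{\bx}\|^2$, using $\bar{\bx} = [\bar{x}^\top, \ldots, \bar{x}^\top]^\top$. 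Combining yields $N\|v\|^2 \leq \frac{\|v\|}{4\gamma}\|\bx - \bar{\bx}\|^2$, and dividing by $\|v\|$ (the case $v=0$ being trivial) gives $\|\hat{x} - \bar{x}\| \leq \frac{1}{4\gamma}\cdot\frac{\|\bx - \bar{\bx}\|^2}{N}$, i.e.\ the claim with $M_2 = 1/(4\gamma)$.

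The one point requiring care, and the main obstacle, is justifying that $\hat{x}\in U_{\Mcal}(2\gamma)$ so that $\bar{x}$ is a genuine singleton and $v$ is truly normal. I would control $\mathrm{dist}(\hat{x},\Mcal)$ by the consensus error: since $\hat{x}$ minimizes $\sum_i\|x_i - p\|^2$ over $p$, one has $\mathrm{dist}(\hat{x},\Mcal) \leq \|\hat{x}-x_i\|$ for each $i$, and averaging together with Cauchy--Schwarz gives $\mathrm{dist}(\hat{x},\Mcal) \leq N^{-1/2}\big(\sum_i\|\hat{x}-x_i\|^2\big)^{1/2} \leq N^{-1/2}\|\bx - \bar{\bx}\|$. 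Hence whenever the consensus error is below the threshold $2\gamma\sqrt{N}$, the summation argument above runs verbatim. For consensus errors above this threshold the estimate instead follows from compactness: $\|\hat{x}-\bar{x}\|$ is bounded by the diameter of $\Mcal$, while $\|\bx-\bar{\bx}\|^2/N$ is bounded below by $4\gamma^2$, so enlarging $M_2$ to the maximum of $1/(4\gamma)$ and a diameter-dependent constant absorbs this regime. I expect the proximal-smoothness bookkeeping in this boundary case, rather than the clean summation, to be where the modest technical effort concentrates.
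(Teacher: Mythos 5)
Your argument is correct. Note first that the paper does not actually prove this lemma: it is imported verbatim as \cite[Lemma 4.4]{deng2023decentralized}, so there is no in-paper proof to compare against; your derivation is a self-contained alternative that uses only machinery already present in Section 2. The core of your proof is sound: since $\bar{x}$ is a nearest point of $\hat{x}$ in $\Mcal$ (which follows from $\sum_i\|y-x_i\|^2 = N\|y-\hat{x}\|^2 + \mathrm{const}$), first-order optimality places $v=\hat{x}-\bar{x}$ in $N_{\bar{x}}\Mcal$, and summing the normal inequality \eqref{eq:normal-bound} over $y=x_i$ collapses the left side to $N\|v\|^2$ because $\sum_i(x_i-\bar{x})=Nv$, yielding the bound with the explicit constant $M_2=1/(4\gamma)$ --- arguably a sharper statement than the lemma itself, which only asserts existence of some $M_2$. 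One remark: the case split on whether $\hat{x}\in U_{\Mcal}(2\gamma)$ is not actually needed. The tube condition governs \emph{uniqueness} of the projection, not normality of the displacement; for a compact $C^2$ submanifold, any selection $\bar{x}\in P_{\Mcal}(\hat{x})$ satisfies $\hat{x}-\bar{x}\in N_{\bar{x}}\Mcal$ (the displacement to any nearest point is a proximal normal, and the proximal normal cone coincides with the normal space), and \eqref{eq:normal-bound} as stated holds for all normals $v$ with no restriction on where $\hat{x}$ sits. So the summation argument runs unconditionally and the diameter/compactness fallback, while harmless, is superfluous. The only caveat worth keeping is that when $\hat{x}$ lies outside the tube the notation $\bar{x}$ is set-valued and one must fix a selection, which is exactly what the rest of the paper implicitly does.
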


With the above preparation, we are able to show that the iterates generated by \eqref{eq:rgd} will always stay in $\mathcal{N}_L:= \{\bx: \|\bx - \bar{\bx} \|_{F, \infty} \leq \delta_0 \}$ with $\delta_0 := \min\{\gamma, \sqrt{\gamma/(2M_2)}, 1/(16M_0 + 8M_1 + 4L_p), (1-2\sigma_2^t)/(8M_1\sqrt{N})\}$ if $x_0 \in \mathcal{N}_L$ and $t \geq \log_{\sigma_2} \frac{1}{4\sqrt{N}}$. 

\begin{lemma}\label{lemma:stay-neigb}
    Let $R$ be a retraction operator on $\Mcal$ and $\{ \bx_{k}\}$ be generated by \eqref{eq:rgd}. If $t \geq \log_{\sigma_2} \frac{1}{4\sqrt{N}} $  and $\bx_0 \in \mathcal{N}_L$, 
    then $\bx_{k} \in \mathcal{N}_L$ for any $k$.  
\end{lemma}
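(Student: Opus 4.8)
The plan is to argue by induction on $k$. The base case $\bx_0\in\mathcal{N}_L$ is the hypothesis, so it suffices to prove that $\bx_k\in\mathcal{N}_L$ forces $\bx_{k+1}\in\mathcal{N}_L$, i.e. that $\|x_{i,k+1}-\bar{x}_{k+1}\|\le\delta_0$ for every block $i$. First I would record the elementary consequences of $\|\bx_k-\bar{\bx}_k\|_{F,\infty}\le\delta_0$ that feed every later estimate: since $\sum_jW_{ij}^t=1$ we may write $\nabla\varphi_i^t(\bx_k)=(x_{i,k}-\bar{x}_k)-\sum_j W_{ij}^t(x_{j,k}-\bar{x}_k)$, whence $\|\nabla\varphi_i^t(\bx_k)\|\le 2\delta_0$ and therefore $\|\grad\varphi_i^t(\bx_k)\|\le 2\delta_0$; moreover $\|\bx_k-\bar{\bx}_k\|\le\sqrt{N}\delta_0$ and, by Lemma \ref{lem:dist-consensus}, $\|\hat{x}_k-\bar{x}_k\|\le M_2\delta_0^2$. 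The constraints $\delta_0\le\gamma$ and $\delta_0\le\sqrt{\gamma/(2M_2)}$ then place $\hat{x}_k$ and each block $(\bW^t\bx_k)_i$ inside $\bar U_{\Mcal}(\gamma)$, so every projection below is a singleton and the $2$-Lipschitz estimate \eqref{eq:lip-proj-alpha} (with $\beta=1$) is available.

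The core of the argument is to compare the Riemannian step with the projected step. Setting $u_i:=-\nabla\varphi_i^t(\bx_k)$, so that $x_{i,k}+u_i=(\bW^t\bx_k)_i$ and $P_{T_{x_{i,k}}\Mcal}(u_i)=-\grad\varphi_i^t(\bx_k)$, Lemma \ref{lem:lip-proj} gives $\|x_{i,k+1}-z_i\|\le M_1\|u_i\|^2\le 4M_1\delta_0^2$, where $z_i:=P_{\Mcal}((\bW^t\bx_k)_i)$ is precisely the projected-gradient iterate. I would then bound $\|x_{i,k+1}-\bar{x}_k\|$ through $z_i$: the projected part is handled exactly as in \eqref{eq:linear-pgd}, combining the $2$-Lipschitz continuity of $P_{\Mcal}$ with the spectral bound $\|\bW^t\bx_k-\hat{\bx}_k\|=\|(W^t-J)\otimes I_d\,(\bx_k-\bar{\bx}_k)\|\le\sigma_2^t\|\bx_k-\bar{\bx}_k\|$, yielding $\|z_i-\bar{x}_k\|\le 2\sigma_2^t\sqrt{N}\delta_0$. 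The hypothesis $t\ge\log_{\sigma_2}\frac{1}{4\sqrt{N}}$ gives $\sigma_2^t\le\frac{1}{4\sqrt{N}}$, so this linear term is at most $\tfrac12\delta_0$, while $\delta_0\le(1-2\sigma_2^t)/(8M_1\sqrt{N})$ absorbs the quadratic error $4M_1\delta_0^2$; together they keep $\|x_{i,k+1}-\bar{x}_k\|$ strictly below $\delta_0$.

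The remaining and most delicate point is that the manifold mean $\bar{x}_{k+1}$ is defined implicitly from the new iterates, so it drifts from $\bar{x}_k$, and I must show this drift is only second order. Writing $\bar{x}_{k+1}=P_{\Mcal}(\hat{x}_{k+1})$ and using the $2$-Lipschitz continuity of the projection reduces the task to bounding $\|\hat{x}_{k+1}-\hat{x}_k\|=\frac1N\bigl\|\sum_i(x_{i,k+1}-x_{i,k})\bigr\|$. The naive per-block estimate only gives $O(\delta_0)$, which would be fatal; the decisive structural fact is the exact identity $\sum_i\nabla\varphi_i^t(\bx_k)=0$. Replacing each $x_{i,k+1}-x_{i,k}$ by $-\grad\varphi_i^t(\bx_k)$ up to an $O(M_0\|\grad\varphi_i^t(\bx_k)\|^2)$ error via Proposition \ref{prop:lip-retr}, and then invoking Lemma \ref{lem:grad-consensus-sum2} to control $\bigl\|\sum_i\grad\varphi_i^t(\bx_k)\bigr\|\le 2\sqrt{N}L_p\,\delta_0\cdot\sqrt{N}\delta_0$, yields $\|\hat{x}_{k+1}-\hat{x}_k\|=O((L_p+M_0)\delta_0^2)$ and hence $\|\bar{x}_{k+1}-\bar{x}_k\|=O((L_p+M_0)\delta_0^2)$. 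This is exactly where the bound $\delta_0\le 1/(16M_0+8M_1+4L_p)$ is used, so that the second-order drift fits inside the slack left by the previous step.

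Finally I would combine $\|x_{i,k+1}-\bar{x}_{k+1}\|\le\|x_{i,k+1}-\bar{x}_k\|+\|\bar{x}_{k+1}-\bar{x}_k\|$, where the linear $\sigma_2^t$ term contributes at most $\tfrac12\delta_0$ and all the second-order contributions, tallied against the specific constants defining $\delta_0$, account for the remaining slack, giving $\|x_{i,k+1}-\bar{x}_{k+1}\|\le\delta_0$ and thus $\bx_{k+1}\in\mathcal{N}_L$. I expect the main obstacle to be Step three: controlling the motion of the implicitly defined manifold mean, since this is the one place where a first-order estimate does not suffice and one must exploit both the exact cancellation $\sum_i\nabla\varphi_i^t=0$ and the second-order accuracy of the retraction, as captured by Lemma \ref{lem:grad-consensus-sum2}.
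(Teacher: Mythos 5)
Your proposal is correct and follows essentially the same route as the paper's proof: induction, placing $\hat{x}_k$ and each $(\bW^t\bx_k)_i$ in $\bar{U}_{\Mcal}(\gamma)$, comparing the retraction step to the projected step via Lemma \ref{lem:lip-proj} plus the spectral bound $\sigma_2^t\le 1/(4\sqrt{N})$, and controlling the drift of the manifold mean through the cancellation $\sum_i\nabla\varphi_i^t=0$, Lemma \ref{lem:grad-consensus-sum2}, and Proposition \ref{prop:lip-retr}, with the same final tally (linear term $\le\tfrac12\delta_0$, all quadratic terms absorbed by $\delta_0\le 1/(16M_0+8M_1+4L_p)$). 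The only cosmetic difference is that you bound $\|(\bW^t\bx_k)_i-\bar{x}_k\|\le\delta_0$ directly by convexity rather than via the paper's $\sqrt{N}\sigma_2^t\delta_0+M_2\delta_0^2$ estimate, which is if anything slightly cleaner.
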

\begin{proof}
  We prove it by induction on $\|\bx_k - \bar{\bx}_k\|_{F,\infty}$. Suppose for some $k\geq 0$ that $\bx_k\in \mathcal{N}_L$.  Note that for any $i \in [N]$,
    \[  \begin{aligned}
        \| \sum_{j=1}^N W_{ij}^t x_{j,k} - \bar{x}_k  \| & \leq \| \sum_{j=1}^N W_{ij}^t x_{j,k} - \hat{x}_k \| + \| \hat{x}_k - \bar{x}_k \| \\
        & = \| \sum_{j=1}^N (W_{ij}^t - \frac{1}{N})(x_{j,k} - \hat{x}_k) \| + \| \hat{x}_k - \bar{x}_k \| \\
        & \leq \sum_{j=1}^N|W_{ij}^t - \frac{1}{N}| \|\bx_k - \bar{\bx}_k \|_{F, \infty} + \frac{M_2 \|\bx_k - \bar{\bx}_k\|^2}{N} \\
        & \leq \sqrt{N}\sigma_2^t \delta_0 + M_2 \delta_0^2 \\
        & \leq \gamma,
    \end{aligned} \]
    where the first inequality is from the triangle inequality, the second equality is due to the Cauchy inequality, and the third inequality comes  from the bound on the distance between the row of $W^t$ and $\mathbf{1}/N$ \cite{diaconis1991geometric,boyd2004fastest}. This implies that $\sum_{j=1}^N W_{ij}^t x_{j,k} \in \bar{U}_{\Mcal}(\gamma)$ for any $i\in [N]$. Moreover, it follows from $\bx_k\in \mathcal{N}_L$ and the definition of $\mathcal{N}_L$ that $\|x_{i,k} - \bar{x}_k\| \leq \gamma, \forall i\in [N]$ and  
    \be \label{eq:xhat} \|\hat{x}_k - \bar{x}_k \| \leq \frac{1}{N}\sum_{i=1}^N \|x_{i,k} - \bar{x}_k\| \leq \gamma. \ee
    This gives $\hat{x}_k \in \bar{U}_{\Mcal}(\gamma)$. Then, we have for any $i \in [N]$,
    \be \label{eq:estimate-xkp1-xk}
    \begin{aligned}
         \| x_{i, k+1} - \bar{x}_{k} \|  = &  \| R_{x_{i,k}}(-\grad \varphi^t_i(\bx_k)) - \bar{x}_k \| \\
        \leq & \| P_{\Mcal}(\sum_{j=1}^N W_{ij}^t x_{j,k}) - \bar{x}_k \| +  M_1 \|\nabla \varphi^t_i(\bx_k)\|^2  \\
        \leq & 2 \| \sum_{j=1}^N W_{ij}^t x_{j,k} - \hat{x}_k \| + M_1\| x_{i,k} - \sum_{j=1}^N W_{ij}^t x_{j,k} \|^2 \\
        \leq & 
        2\sqrt{N}\sigma_2^t \| \bx_k - \bar{\bx}_k \|_{F,\infty} + 4 M_1 \| \bx_k - \bar{\bx}_k \|_{F,\infty}^2 \\
        \leq & \frac{1}{2} \|\bx_k - \bar{\bx}_k \|_{F, \infty} + \frac{1}{2}\|\bx_k - \bar{\bx}_k \|_{F, \infty} \\
        =&  \|\bx_k - \bar{\bx}_k \|_{F, \infty},
    \end{aligned}
     \ee
    where the first inequality is from Lemma \ref{lem:lip-proj}, the second inequality is due to the 2-Lipschitz continuity of $P_{\Mcal}$ over $\bar{U}_{\Mcal}(\gamma)$, the third inequality is from the assumptions on $W$, and the last inequality is from the definition of $\mathcal{N}_L$. This gives 
    \[ {\rm dist}(\hat{x}_{k+1}, \Mcal) \leq \frac{1}{N} \sum_{i=1}^N \| x_{i,k+1} - \bar{x}_k \| \leq \gamma. \]
    Furthermore, we have 
    \[ \begin{aligned}
        & \| x_{i, k+1} - \bar{x}_{k+1} \|\\
        \leq& \| x_{i,k+1} - \bar{x}_k \| + \| \bar{x}_k - \bar{x}_{k+1} \| \\
         \leq & 2\sqrt{N}\sigma_2^t \| \bx_k - \bar{\bx}_k \|_{F,\infty} + 4 M_1 \| \bx_k - \bar{\bx}_k \|_{F,\infty}^2  + 2 \|\hat{x}_k - \hat{x}_{k+1}\|  \\
        = & 2\sqrt{N}\sigma_2^t \| \bx_k - \bar{\bx}_k \|_{F,\infty} + 4 M_1 \| \bx_k - \bar{\bx}_k \|_{F,\infty}^2  + 2 \| \frac{1}{N} \sum_{i=1}^N R_{x_{i,k}}(- \grad \varphi^t_i(\bx_k)) - \hat{x}_k \| \\
        \leq & 2\sqrt{N}\sigma_2^t \| \bx_k - \bar{\bx}_k \|_{F,\infty} + 4 M_1 \| \bx_k - \bar{\bx}_k \|_{F,\infty}^2 + 2 \|\frac{1}{N}\sum_{i=1}^N \grad \varphi^t_i(\bx_k)\| \\
         & + \frac{2M_0}{N} \|\grad \varphi^t(\bx_k) \|^2 \\
        \leq & 2\sqrt{N}\sigma_2^t \| \bx_k - \bar{\bx}_k \|_{F,\infty} + 4M_1\| \bx_k - \bar{\bx}_k \|_{F,\infty}^2  + 2L_p /\sqrt{N}\|\bx_k - \bar{\bx}_k\|_{F, \infty} \|\bx_{{k}} -\bar{\bx}_{{k}}\| \\
        & + \frac{8M_0}{N}\|\bx_{{k}} -\bar{\bx}_{{k}}\|^2 \\
        \leq & \frac{1}{2}\|\bx_k - \bar{\bx}_k\|_{F,\infty} + (4M_1  + 2L_p  + 8M_0 )\|\bx_k - \bar{\bx}_k\|_{F,\infty}^2 
        \\
        \leq & \|\bx_k - \bar{\bx}_k \|_{F, \infty},
    \end{aligned}
     \]
     where the first inequality is from the triangle inequality, the second inequality is due to \eqref{eq:estimate-xkp1-xk} and the 2-Lipschitz continuity of $P_{\Mcal}$ over $\bar{U}_{\Mcal}(\gamma)$, the third inequality comes from Lemma \ref{lem:lip-proj} and Lemma \ref{prop:lip-retr}, the fourth inequality is from Lemma \ref{lem:grad-consensus-sum2}, the 2-Lipschitz continuity of $P_{\Mcal}$ over $\bar{U}_{\Mcal}(\gamma)$ and the assumptions on $W$, and the last inequality is from the definition of $\mathcal{N}_L$. Hence, if $\bx_k \in \mathcal{N}_L$, then $\bx_{k+1} \in \mathcal{N}_L$. We complete the proof.
\end{proof}

The aforementioned lemma provides a sufficient condition, namely $\bx_0 \in \mathcal{N}_L$, to ensure that $\sum_{j=1} W_{ij}^t x_{jk} \in \bar{U}_{\Mcal}(\gamma)$ and $\hat{x}_k \in \bar{U}_{\Mcal}(\gamma)$ for any $k \geq 0$. 
Consequently, the Riemannian gradient descent with the unit step size \eqref{eq:rgd} exhibits linear convergence if
 $x_0 \in \mathcal{N}_L$ and $t \geq \log_{\sigma_2} \frac{1}{4\sqrt{N}}$.

\begin{theorem} \label{thm:linear-lip}
    Let $R$ be a retraction operator on $\Mcal$ and $\{\bx_k\}$ be generated by \eqref{eq:rgd}. If $t \geq \log_{\sigma_2} \frac{1}{4\sqrt{N}} $ and $\bx_0 \in \mathcal{N}_L$, 
    then $\{\bx_{k}\}$ converges Q-linearly to the solution set $\mathcal{X}$ with rate $(1+2\sigma_2^t)/2$,  which is characterized by the inequality   
    \[ \| \bx_{k+1} - \bar{\bx}_{k+1} \| \leq \frac{1+2\sigma_2^t}{2}  \| \bx_k - \bar{\bx}_k \| \leq \left(\frac{1+2\sigma_2^t}{2}\right)^{k+1}\|\bx_0 - \bar{\bx}_0 \|. \]  
\end{theorem}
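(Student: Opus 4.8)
The plan is to prove the one-step contraction $\|\bx_{k+1} - \bar{\bx}_{k+1}\| \le \frac{1+2\sigma_2^t}{2}\|\bx_k - \bar{\bx}_k\|$ and then iterate it. By Lemma~\ref{lemma:stay-neigb}, the hypotheses $t \ge \log_{\sigma_2}\frac{1}{4\sqrt N}$ and $\bx_0 \in \mathcal{N}_L$ already guarantee that every iterate stays in $\mathcal{N}_L$, and (from its proof) that each $\hat{x}_k$ and each averaged point $\sum_{j} W_{ij}^t x_{j,k}$ lies in $\bar{U}_{\Mcal}(\gamma)$. This is exactly what makes the singleton property of $\bar{x}_k = P_{\Mcal}(\hat{x}_k)$ and the $2$-Lipschitz bound~\eqref{eq:lip-proj-alpha} for $P_{\Mcal}$ available at every step, so the whole argument may proceed as if we were locally in the convex regime. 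The first reduction is immediate: since $\bar{\bx}_{k+1}$ is the projection of $\bx_{k+1}$ onto $\mathcal{X}$ and $\bar{\bx}_k \in \mathcal{X}$, we have $\|\bx_{k+1} - \bar{\bx}_{k+1}\| \le \|\bx_{k+1} - \bar{\bx}_k\|$, so it suffices to bound the latter.

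For the per-component estimate I would invoke Lemma~\ref{lem:lip-proj} with $x = x_{i,k}$ and $u = -\nabla\varphi_i^t(\bx_k) = \sum_j W_{ij}^t x_{j,k} - x_{i,k}$, so that $x+u = \sum_j W_{ij}^t x_{j,k}$ and $R_{x_{i,k}}(P_{T_{x_{i,k}}\Mcal}(u)) = x_{i,k+1}$. This gives $\|x_{i,k+1} - P_{\Mcal}(\sum_j W_{ij}^t x_{j,k})\| \le M_1\|\nabla\varphi_i^t(\bx_k)\|^2$; combining with the triangle inequality and the $2$-Lipschitz continuity of $P_{\Mcal}$ (applied to $\sum_j W_{ij}^t x_{j,k}$ and $\hat x_k$, whose image is $\bar{x}_k$) yields the key componentwise bound $\|x_{i,k+1} - \bar{x}_k\| \le 2\|\sum_j W_{ij}^t x_{j,k} - \hat{x}_k\| + M_1\|\nabla\varphi_i^t(\bx_k)\|^2$. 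Taking the $\ell^2$ norm over $i$ and splitting with the triangle inequality separates a linear and a quadratic contribution.

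The linear term is spectral: stacking the residuals $\sum_j W_{ij}^t x_{j,k} - \hat{x}_k = \sum_j (W_{ij}^t - \tfrac1N) x_{j,k}$ over $i$ gives $(W^t - J)\otimes I_d (\bx_k - \hat{\bx}_k)$, whose norm is at most $\sigma_2^t\|\bx_k - \hat{\bx}_k\| \le \sigma_2^t\|\bx_k - \bar{\bx}_k\|$, the last step because $\hat{x}_k$ is the unconstrained Euclidean mean so $\|\bx_k - \hat{\bx}_k\| \le \|\bx_k - \bar{\bx}_k\|$. This contributes $2\sigma_2^t\|\bx_k - \bar{\bx}_k\|$. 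For the quadratic term I would use $\max_i\|\nabla\varphi_i^t(\bx_k)\| \le 2\|\bx_k - \bar{\bx}_k\|_{F,\infty}$ (from row-stochasticity of $W^t$) together with $\|\nabla\varphi^t(\bx_k)\| = \|(I-\bW^t)(\bx_k - \hat{\bx}_k)\| \le 2\|\bx_k - \bar{\bx}_k\|$ (using $\|I - \bW^t\|_{\rm op} \le 2$), and the elementary inequality $\sum_i \|\nabla\varphi_i^t(\bx_k)\|^4 \le (\max_i\|\nabla\varphi_i^t(\bx_k)\|^2)\|\nabla\varphi^t(\bx_k)\|^2$, to conclude that the quadratic contribution is at most $4M_1\|\bx_k - \bar{\bx}_k\|_{F,\infty}\|\bx_k - \bar{\bx}_k\| \le 4M_1\delta_0\|\bx_k - \bar{\bx}_k\|$.

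Finally, the choice $\delta_0 \le (1-2\sigma_2^t)/(8M_1\sqrt N)$ makes $4M_1\delta_0 \le (1-2\sigma_2^t)/2$, so the two contributions add to exactly $\big(2\sigma_2^t + \tfrac{1-2\sigma_2^t}{2}\big)\|\bx_k - \bar{\bx}_k\| = \frac{1+2\sigma_2^t}{2}\|\bx_k - \bar{\bx}_k\|$; since $\sigma_2^t \le \frac{1}{4\sqrt N} < \frac12$ the factor is strictly below $1$, and iterating yields the stated geometric decay. I expect the main obstacle to be the bookkeeping of the quadratic term: one has to pass from the per-component fourth powers of the gradient to the clean product $\|\bx_k - \bar{\bx}_k\|_{F,\infty}\|\bx_k - \bar{\bx}_k\|$ and then absorb it into the rate using precisely the chosen $\delta_0$, all while keeping the linear part's constant at exactly $2\sigma_2^t$ so that the two pieces telescope to $(1+2\sigma_2^t)/2$ rather than to something merely comparable.
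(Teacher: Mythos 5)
Your proposal is correct and follows essentially the same route as the paper's proof: reduce to $\|\bx_{k+1}-\bar{\bx}_k\|$, compare the retraction step to the projection step via Lemma~\ref{lem:lip-proj}, apply the $2$-Lipschitz continuity of $P_{\Mcal}$ and the spectral bound to get the $2\sigma_2^t$ linear term, and absorb the quadratic remainder using $\delta_0 \le (1-2\sigma_2^t)/(8M_1\sqrt{N})$. The only difference is that you carry out the estimate componentwise before aggregating (keeping an $\|\cdot\|_{F,\infty}\|\cdot\|$ product for the quadratic term where the paper uses $\|\cdot\|^2$ directly), which is just bookkeeping and yields the same rate.
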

\begin{proof}
   With  Lemmas \ref{lem:lip-proj} and \ref{lemma:stay-neigb}, we can derive the following results:
    \be \label{eq:linear-lip-ana} \begin{aligned}
        \| \bx_{k+1} - \bar{\bx}_{k+1} \| & \leq \| R_{\bx_k}(- \grad \varphi^t(\bx_k)) - \bar{\bx}_k  \| \\
        & \leq \| P_{\mathcal{M}^N}(\bx_k - \nabla \varphi^t(\bx_k)) - \bar{\bx}_k \| + 4 M_1 \|\bx_k  - \bar{\bx}_k\|^2 \\
        & \leq 2 \| \bW^t \bx_k - \hat{\bx}_k \|  + 4M_1 \|\bx_k  - \bar{\bx}_k\|^2 \\
        & \leq 2 \sigma_2^t \| \bx_k - \bar{\bx}_k \| + 4M_1 \|\bx_k  - \bar{\bx}_k\|^2 \\
        & \leq (1 + 2\sigma_2^t)/2 \| \bx_k  - \bar{\bx}_k  \|,
    \end{aligned} \ee
    where the third inequality is from the $2$-Lipschitz continuity of $P_\Mcal$ over $\bar{U}_{\Mcal}(\gamma)$, $\sum_{j=1} W_{ij}^t x_{jk} \in \bar{U}_{\Mcal}(\gamma)$, and $\hat{x}_k \in \bar{U}_{\Mcal}(\gamma)$,  and the last inequality is due to $ 4M_1 \|\bx_k  - \bar{\bx}_k\| \leq 4M_1 \sqrt{N} \|\bx_k  - \bar{\bx}_k\|_{F,\infty} \leq (1-2\sigma_2^t)/2$. The proof is completed. 
\end{proof}

\begin{remark}
    The rate of convergence $(1+ 2\sigma_2^t)/2$ in comparison to its Euclidean counterpart $\sigma_2^t$ may seem worse.   However, as $\bx_k$ converges to $\mathcal{X}$,  we can still achieve the rate of $\sigma_2^t$ for the Riemannian gradient descent with the unit step size. 
    Specifically, by utilizing \eqref{eq:lip-proj-alpha}, the $2$-Lipschitz continuity used in the third inequality of \eqref{eq:linear-lip-ana} can be improved to $2/(2-\beta_k)$ with $\beta_k \rightarrow 0$. Consequently, we obtain the following inequality:
    \[ \| \bx_{k+1} - \bar{\bx}_{k+1}\| \leq \left(\frac{2}{2-\beta_k} \sigma_2^t + 4M_1 \|\bx_k -\bar{\bx}_k\|\right) \| \bx_k -\bar{\bx}_k\| \|\rightarrow \sigma_2^t\|\bx_k - \bar{\bx}_k\|. \]
    
\end{remark}

\section{Restrict secant inequality and locally linear convergence of Riemannian gradient descent} \label{sec:rsi}

In this section, we will begin by examining the restricted secant inequality of \eqref{prob:original}. Subsequently, we will use such restricted secant inequality to analyze the convergence behavior of the Riemannian gradient descent with a constant step size $\alpha > 0$, given by:
\be \label{eq:rgd-alpha} \bx_{k+1} = R_{\bx_k}(-\alpha \grad \varphi^t(\bx_k)). \ee

\subsection{Restricted secant inequality}
Firstly, we can establish the following inequality relating the Euclidean consensus error, denoted as $\|\bx - \hat{\bx}\|$, to the manifold consensus error, denoted as $\| \bx - \bar{\bx} \| $.
\begin{lemma} \label{lem:conseus-bound}
For any $\bx\in \Mcal^N$ with $\| \bx - \bar{\bx} \|^2 \leq N/(4M_2^2)$, we have
   \be \label{eq:consensus-ineq} \frac{1}{4}\| \bx - \bar{\bx} \|^2 \leq \| \bx - \hat{\bx}  \|^2 \leq \| \bx - \bar{\bx} \|^2. \ee
\end{lemma}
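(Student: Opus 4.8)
The plan is to reduce everything to an exact orthogonal decomposition of the consensus error around the Euclidean mean, and then to absorb the resulting gap term using Lemma~\ref{lem:dist-consensus} together with the smallness hypothesis on $\|\bx - \bar{\bx}\|$.

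First I would record the key identity. Since $\hat{x} = \frac{1}{N}\sum_{i=1}^N x_i$ is the arithmetic mean, for any $y \in \R^{d\times r}$ the stacked vector $\by = [y^\top,\ldots,y^\top]^\top$ satisfies the bias-variance decomposition $\|\bx - \by\|^2 = \sum_{i=1}^N \|x_i - y\|^2 = \|\bx - \hat{\bx}\|^2 + N\|\hat{x} - y\|^2$, because the cross term $\sum_{i=1}^N \iprod{x_i - \hat{x}}{\hat{x} - y}$ vanishes (the summed inner product telescopes to $\iprod{\sum_i x_i - N\hat{x}}{\hat{x}-y} = 0$). Specializing to $y = \bar{x}$ gives the central identity
\[ \|\bx - \bar{\bx}\|^2 = \|\bx - \hat{\bx}\|^2 + N\|\hat{x} - \bar{x}\|^2. \]
The upper bound in \eqref{eq:consensus-ineq} is then immediate, since the gap term $N\|\hat{x} - \bar{x}\|^2$ is nonnegative.

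For the lower bound I would control that same gap term. Lemma~\ref{lem:dist-consensus} gives $\|\hat{x} - \bar{x}\| \leq M_2 \|\bx - \bar{\bx}\|^2/N$, so squaring and multiplying by $N$ yields $N\|\hat{x} - \bar{x}\|^2 \leq M_2^2 \|\bx - \bar{\bx}\|^4 / N$. The hypothesis $\|\bx - \bar{\bx}\|^2 \leq N/(4M_2^2)$ then bounds this by $\tfrac14 \|\bx - \bar{\bx}\|^2$. Substituting into the central identity and rearranging gives $\tfrac34 \|\bx - \bar{\bx}\|^2 \leq \|\bx - \hat{\bx}\|^2$, which is in fact stronger than the claimed factor $\tfrac14$; I would state the weaker clean bound to match \eqref{eq:consensus-ineq}.

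There is no real obstacle here: the argument is elementary once the orthogonality of $\bx - \hat{\bx}$ to the consensus subspace is used to make the decomposition exact. The only point requiring care is that the gap term is \emph{quartic} in $\|\bx - \bar{\bx}\|$, so its absorbability into the quadratic terms is precisely what the smallness assumption $\|\bx - \bar{\bx}\|^2 \leq N/(4M_2^2)$ guarantees; without it the lower bound could fail.
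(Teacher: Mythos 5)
Your proof is correct, and it sharpens the paper's argument slightly by using a different decomposition. The paper bounds the lower estimate by expanding $\|\bx-\bar{\bx}\|^2=\sum_i\|x_i-\hat{x}+\hat{x}-\bar{x}\|^2$ and applying the lossy inequality $(a+b)^2\leq 2(a^2+b^2)$, which, after invoking Lemma~\ref{lem:dist-consensus} and the hypothesis $\|\bx-\bar{\bx}\|^2\leq N/(4M_2^2)$, yields $\|\bx-\bar{\bx}\|^2\leq 2\|\bx-\hat{\bx}\|^2+\tfrac12\|\bx-\bar{\bx}\|^2$ and hence the constant $\tfrac14$. You instead use the exact bias--variance (Pythagorean) identity $\|\bx-\bar{\bx}\|^2=\|\bx-\hat{\bx}\|^2+N\|\hat{x}-\bar{x}\|^2$, which makes the upper bound immediate (the paper gets it equivalently from the minimizing property of $\hat{x}$) and, combined with the very same use of Lemma~\ref{lem:dist-consensus} and the same smallness hypothesis, gives the stronger constant $\tfrac34$ in place of $\tfrac14$. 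Both arguments hinge on the identical key ingredients --- the quartic control of $N\|\hat{x}-\bar{x}\|^2$ and its absorption into the quadratic term --- so the routes differ only in the decomposition step; what yours buys is a cleaner identity and a tighter constant at no extra cost.
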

\begin{proof} According to the definitions of $\bar{x}$ and $\hat{x}$, we have $\| \bx - \hat{\bx} \|^2 \leq \|\bx - \bar{\bx} \|^2$. Further, we have
    \be \label{eq:est-con}
    \begin{aligned}
        \| \bx - \bar{\bx}  \|^2 & = \sum_{i=1}^N \|x_i -\hat{x} + \hat{x} -\bar{x}\|^2 \\
        & \leq 2\sum_{i=1}^N \left( \|x_i - \hat{x}\|^2 + \|\hat{x} - \bar{x}\|^2 \right) \\
        & \leq 2 \|\bx - \hat{\bx}\|^2 + 2N \cdot  M_2^2 \frac{\|\bx - \bar{\bx}\|^4}{N^2} \\
        & \leq 2 \|\bx - \hat{\bx}\|^2 + \frac{1}{2}\| \bx - \bar{\bx} \|^2,
    \end{aligned} \ee
    where the first inequality is due to $(a+b)^2 \leq 2(a^2 + b^2)$, the second inequality is from Lemma \ref{lem:dist-consensus}, and the last inequality comes from $\| \bx - \bar{\bx} \|^2 \leq N/(4M_2^2)$. This gives $\|\bx - \bar{\bx} \| / 4 \leq \|\bx - \hat{\bx}\|^2$. We complete the proof.
\end{proof}

By utilizing the orthogonality structure of the Stiefel manifold, we can establish a stronger inequality than \eqref{eq:est-con}, namely $\|\bx - \bar{\bx}\|^2/2 \leq \| \bx - \hat{\bx}\|^2 \leq \|\bx - \bar{\bx} \|^2$, which holds for any $\bx \in {\rm St}(d,r)^N$ as shown in \cite[Lemma 1]{chen2021local}. Additionally, by \cite[Lemma 8]{chen2021local}, we have the following lemma regarding the quadratic growth of the objective function $\varphi^t(\bx)$. 
\begin{lemma} \label{lemma:quad-growth}
For any $t \geq 1$ and $\bx \in \mathcal{M}^N$, it holds that
\be \label{eq:qg-1}
\varphi^t(\bx)-\varphi^t(\bar{\bx}) \geq \frac{\mu_t}{2}\|\bx-\hat{\bx}\|^2.
\ee
Moreover, if $\|\bx-\bar{\bx}\|^2 \leq \frac{N}{4M_1^2}$, we have
\be \label{eq:qg-2}
\varphi^t(\bx)-\varphi^t(\bar{\bx}) \geq \frac{\mu_t}{4}\|\bx - \bar{\bx}\|^2. 
\ee
\end{lemma}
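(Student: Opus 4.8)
The plan is to recognize that $\varphi^t$ is, up to a factor, the quadratic form associated with the ``graph Laplacian'' $I - \bW^t$, and then to read off both inequalities directly from the spectrum of $W^t$. First I would expand the squared norms in $\varphi^t(\bx) = \frac14\sum_{i,j} W_{ij}^t\|x_i - x_j\|^2$ and invoke the double stochasticity of $W^t$ (Assumption \ref{assum-w}, giving $\sum_j W_{ij}^t = \sum_i W_{ij}^t = 1$) to collapse the cross terms. This should yield the clean identity
\[
\varphi^t(\bx) = \tfrac12\, \iprod{\bx}{(I - \bW^t)\bx},
\]
where $\bW^t = W^t \otimes I_d$. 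In particular $\varphi^t(\bar{\bx}) = 0$, since $\bar{\bx}$ is a consensus point and every summand $\|x_i - x_j\|^2$ vanishes, so the left-hand side of both \eqref{eq:qg-1} and \eqref{eq:qg-2} is simply $\varphi^t(\bx)$.

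For \eqref{eq:qg-1} I would exploit the spectral structure of $I - \bW^t$. Its kernel is exactly the consensus space $\{\by : y_1 = \cdots = y_N\}$: since $W^t \mathbf{1}_N = \mathbf{1}_N$ we have $(I - W^t)J = 0$, hence $(I - \bW^t)\hat{\bx} = 0$ because $\hat{\bx} = (J\otimes I_d)\bx$. By symmetry of $I - \bW^t$ the cross terms then drop and
\[
\iprod{\bx}{(I - \bW^t)\bx} = \iprod{\bx - \hat{\bx}}{(I - \bW^t)(\bx - \hat{\bx})}.
\]
Now $\bx - \hat{\bx}$ lies in the orthogonal complement of the kernel, where the smallest eigenvalue of $I - W^t$ is $1 - \lambda_2(W^t) = \mu_t$; the Rayleigh-quotient bound then gives $\iprod{\bx - \hat{\bx}}{(I - \bW^t)(\bx - \hat{\bx})} \geq \mu_t \|\bx - \hat{\bx}\|^2$, which is \eqref{eq:qg-1}.

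The second claim \eqref{eq:qg-2} follows by transferring the Euclidean consensus error $\|\bx - \hat{\bx}\|$ to the manifold consensus error $\|\bx - \bar{\bx}\|$. Under the smallness hypothesis $\|\bx - \bar{\bx}\|^2 \leq N/(4M_1^2)$, which places $\bx$ in the regime where Lemma \ref{lem:conseus-bound} applies, the left inequality of \eqref{eq:consensus-ineq} gives $\|\bx - \hat{\bx}\|^2 \geq \tfrac14\|\bx - \bar{\bx}\|^2$; substituting into \eqref{eq:qg-1} immediately yields $\varphi^t(\bx) - \varphi^t(\bar{\bx}) \geq \frac{\mu_t}{4}\|\bx - \bar{\bx}\|^2$.

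I expect the core of the argument to be entirely linear-algebraic and routine once the quadratic-form identity is in hand; the only places requiring care are verifying that the kernel of $I - \bW^t$ is precisely the consensus space (so that subtracting $\hat{\bx}$ is exactly the orthogonal projection onto its complement) and confirming the eigenvalue gap $\mu_t$ via Assumption \ref{assum-w}(iii). Notably, the genuinely geometric content of the problem---comparing the Euclidean mean $\hat{x}$ with the manifold mean $\bar{x}$---is already encapsulated in Lemmas \ref{lem:dist-consensus} and \ref{lem:conseus-bound}, so for \eqref{eq:qg-2} no new manifold estimate is needed, and the main subtlety is simply ensuring that the stated smallness threshold is consistent with the hypothesis of Lemma \ref{lem:conseus-bound}.
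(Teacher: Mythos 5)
Your proof is correct and follows essentially the same route as the paper: for \eqref{eq:qg-1} the paper simply cites \cite[Lemma 8]{chen2021local} for the bound $2\varphi^t(\bx)\geq\mu_t\|\bx-\hat{\bx}\|^2$, which is precisely the Laplacian quadratic-form/spectral-gap argument you write out (using $\varphi^t(\bx)=\tfrac12\iprod{\bx}{(I-\bW^t)\bx}$, $(I-\bW^t)\hat{\bx}=0$, and the Rayleigh bound on the orthogonal complement of the consensus space, where the kernel identification relies on $\sigma_2<1$ from Assumption \ref{assum-w}), and \eqref{eq:qg-2} is obtained exactly as you do by applying Lemma \ref{lem:conseus-bound}. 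The one subtlety you rightly flag is real but merely notational: the hypothesis in the statement should read $\|\bx-\bar{\bx}\|^2\leq N/(4M_2^2)$ (with $M_2$, not $M_1$) so that it matches the hypothesis of Lemma \ref{lem:conseus-bound}.
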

\begin{proof}
    By \cite[Lemma 8]{chen2021local}, it holds that $2\varphi^t(\bx) \geq \mu_t\|\bx-\hat{\bx}\|^2.$
    By taking into account the fact that $\varphi^t(\bar{\bx}) =0$, we can conclude that \eqref{eq:qg-1} holds. The inequality \eqref{eq:qg-2} is derived by applying Lemma \ref{lem:conseus-bound} to \eqref{eq:qg-1}.
\end{proof}

With the aforementioned preparation, we are now ready to establish the following type of restricted secant inequality.
\begin{lemma}
    Suppose that $\mathbf{x} \in \Mcal^N$ with $\|\bx - \bar{\bx}\|_{F, \infty}^2 \leq \gamma $ and $ \|\bx - \bar{\bx}\|^2 \leq \frac{N}{4M_2^2}$. For any $t\geq 1$, 
    the following holds:
    \be \label{eq:RSI-R}
    \left\langle\bx-\bar{\bx}, \operatorname{grad} \varphi^t(\bx)\right\rangle \geq (1 - \frac{\|\bx - \bar{\bx}\|_{F, \infty}^2}{\gamma})\frac{\mu_t}{4}\|\bx - \bar{\bx}\|^2.
    \ee
\end{lemma}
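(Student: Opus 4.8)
The plan is to split the Riemannian inner product into a Euclidean part plus a normal correction, evaluate the Euclidean part exactly as $2\varphi^t(\bx)$, show the correction is of higher order using proximal smoothness, and finish with the quadratic growth of Lemma~\ref{lemma:quad-growth}.

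First I would decompose componentwise. Writing $v_i := P_{N_{x_i}\Mcal}(\nabla\varphi_i^t(\bx))$, the identity $\grad\varphi_i^t(\bx) = P_{T_{x_i}\Mcal}(\nabla\varphi_i^t(\bx)) = \nabla\varphi_i^t(\bx) - v_i$ gives
\[ \iprod{\bx-\bar{\bx}}{\grad\varphi^t(\bx)} = \iprod{\bx-\bar{\bx}}{\nabla\varphi^t(\bx)} - \sum_{i=1}^N \iprod{x_i-\bar x}{v_i}. \]
Because $\bar{\bx}$ is a consensus point and $W^t\mathbf{1}_N = \mathbf{1}_N$, we have $(I-\bW^t)\bar{\bx}=0$, so $\nabla\varphi^t(\bx)=(I-\bW^t)(\bx-\bar{\bx})$; combined with $\varphi^t(\bx)=\tfrac12\iprod{\bx}{(I-\bW^t)\bx}$ this evaluates the first term exactly as $\iprod{\bx-\bar{\bx}}{\nabla\varphi^t(\bx)} = 2\varphi^t(\bx)$.

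The heart of the argument is to show the normal correction $\sum_i\iprod{x_i-\bar x}{v_i}$ is quadratically small relative to $\varphi^t(\bx)$, and this is exactly where proximal smoothness enters. Choosing $v=P_{N_x\Mcal}(y-x)$ in the normal inequality \eqref{eq:normal-bound} gives the chord estimate $\|P_{N_x\Mcal}(y-x)\|\le\tfrac{\|y-x\|^2}{4\gamma}$ for all $x,y\in\Mcal$. Applying it with $y=\bar x$ bounds the normal part of the displacement, $\|P_{N_{x_i}\Mcal}(x_i-\bar x)\|\le\tfrac{\|x_i-\bar x\|^2}{4\gamma}$; applying it to each chord in $\nabla\varphi_i^t(\bx)=\sum_j W_{ij}^t(x_i-x_j)$ bounds the normal part of the gradient, $\|v_i\|\le\tfrac{1}{4\gamma}\sum_j W_{ij}^t\|x_i-x_j\|^2$. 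Since $v_i$ is normal at $x_i$, $\iprod{x_i-\bar x}{v_i}=\iprod{P_{N_{x_i}\Mcal}(x_i-\bar x)}{v_i}$, so Cauchy--Schwarz together with these two estimates, the uniform bound $\|x_i-\bar x\|\le\|\bx-\bar{\bx}\|_{F,\infty}$, and the identity $\sum_{i,j}W_{ij}^t\|x_i-x_j\|^2=4\varphi^t(\bx)$ collapse the sum into a term proportional to $\|\bx-\bar{\bx}\|_{F,\infty}^2\,\varphi^t(\bx)$. Keeping track of the proximal-smoothness constants (and using the standing hypothesis $\|\bx-\bar{\bx}\|_{F,\infty}^2\le\gamma$) then yields
\[ \iprod{\bx-\bar{\bx}}{\grad\varphi^t(\bx)} \ge 2\varphi^t(\bx)\Big(1-\tfrac{\|\bx-\bar{\bx}\|_{F,\infty}^2}{\gamma}\Big), \]
whose prefactor is nonnegative.

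Finally I would invoke quadratic growth. By \eqref{eq:qg-1}, $2\varphi^t(\bx)\ge\mu_t\|\bx-\hat{\bx}\|^2$, and since $\|\bx-\bar{\bx}\|^2\le N/(4M_2^2)$, Lemma~\ref{lem:conseus-bound} gives $\|\bx-\hat{\bx}\|^2\ge\tfrac14\|\bx-\bar{\bx}\|^2$, hence $2\varphi^t(\bx)\ge\tfrac{\mu_t}{4}\|\bx-\bar{\bx}\|^2$. Substituting this into the nonnegative-prefactor bound produces \eqref{eq:RSI-R}. The main obstacle is the third paragraph: on a nonconvex submanifold the Euclidean and Riemannian gradients differ by a normal vector that is, a priori, only first order in the consensus radius, so a crude estimate would destroy the rate. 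The decisive point, available only through proximal smoothness, is that \emph{both} the normal part of each chord and the normal part of the consensus gradient are second order, which is what turns the correction into a genuine second-order relative perturbation of $\varphi^t(\bx)$ and preserves the sharp constant in front of $\|\bx-\bar{\bx}\|^2$.
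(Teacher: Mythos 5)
Your proposal takes essentially the same route as the paper: decompose $\iprod{\bx-\bar{\bx}}{\grad\varphi^t(\bx)}$ into the Euclidean inner product (which both arguments identify with a multiple of $\varphi^t(\bx)$) minus a normal correction, control that correction via the normal inequality \eqref{eq:normal-bound} supplied by proximal smoothness, and finish with the quadratic growth \eqref{eq:qg-2}, which already encodes Lemma \ref{lem:conseus-bound}. The only substantive divergence is in the correction estimate: the paper applies \eqref{eq:normal-bound} once to each chord term $\iprod{P_{N_{x_i}\Mcal}(x_i-\bar{x})}{x_i-x_j}$, whereas your Cauchy--Schwarz with the chord bound $\|P_{N_x\Mcal}(y-x)\|\le \|y-x\|^2/(4\gamma)$ on \emph{both} factors accumulates an extra $1/(4\gamma)$, so the exact prefactor $1-\|\bx-\bar{\bx}\|_{F,\infty}^2/\gamma$ follows from your intermediate bound only when $\gamma$ is bounded away from zero (e.g.\ $\gamma\ge 1/8$) --- a constant-tracking looseness of the same kind already present in the paper's own display, and one that does not affect the structure of the argument.
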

\begin{proof}
It follows from the definition of $\grad \varphi^t(\bx)$ that
\be \label{eq:grad-prod} \iprod{\grad \varphi^t(\bx)}{\bx - \bar{\bx}} = \iprod{\nabla \varphi^t(\bx)}{\bx - \bar{\bx}} - \iprod{P_{N_{\bx} \Mcal^N}(\nabla \varphi^t(\bx))}{\bx - \bar{\bx}}. \ee
Since $\Mcal$ is $2\gamma$-proximally smooth, we have by \eqref{eq:normal-bound} that
\be \label{eq:grad-decomp-normal}
\begin{aligned}
  \iprod{P_{N_{\bx} \Mcal^N}(\nabla \varphi^t(\bx))}{\bx - \bar{\bx}} & = \iprod{P_{N_{\bx} \Mcal^N}(\bx - \bar{\bx})}{\nabla \varphi^t(\bx)}   \\
  & = \sum_{i=1}^N \iprod{P_{N_{x_i}\Mcal}(x_i - \bar{x})}{\sum_{j=1}^N W_{ij}^t (x_i - x_j)} \\
 & =   \sum_{i=1}^N \sum_{j=1}^N W_{ij}^t \iprod{P_{N_{x_i}\Mcal}(x_i - \bar{x})}{ x_i - x_j}  \\
  & \leq  \frac{\max_i \|x_i - \bar{x}\|^2}{4\gamma}\sum_{i=1}^N  \sum_{j=1}^N W_{ij}^t \|x_i - x_j\|^2 \\
  & \leq \frac{\|\bx - \bar{\bx}\|_{F, \infty}^2}{\gamma} \varphi^t(\bx).
\end{aligned}
 \ee
Substituting \eqref{eq:grad-decomp-normal} into \eqref{eq:grad-prod} results in the following inequality:
 \be \label{eq:grad-decomp} \iprod{\grad \varphi^t(\bx)}{\bx - \bar{\bx}} \geq (1-\frac{\|\bx - \bar{\bx}\|_{F, \infty}^2}{\gamma}) \varphi^t(\bx).  \ee
By combining \eqref{eq:grad-decomp} and Lemma \ref{lemma:quad-growth}, we obtain \eqref{eq:RSI-R}. 
\end{proof}

To establish a more general restricted secant inequality, we also provide the following lemma that relates the objective function values, gradients, and consensus error.
\begin{lemma} \label{lem:grad-consensus-sum}
For any $\bx \in \Mcal^N$, it holds that
\begin{align}
    & \| \grad \varphi^t(\bx) \|^2 \leq 2L_t \varphi^t(\bx), \label{eq:grad-obj} \\
    & \| P_{N_{x_i}\Mcal}(\nabla \varphi_i^t(\bx)) \| \leq \sqrt{\frac{1}{\gamma}} \|\bx - \bar{\bx}\|_{F, \infty}^{\frac{3}{2}}. \label{eq:grad-normal}
\end{align}
\end{lemma}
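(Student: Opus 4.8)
The plan is to prove the two estimates by separate arguments: \eqref{eq:grad-obj} is essentially a spectral fact about $\bW^t$, while \eqref{eq:grad-normal} rests on the normal inequality \eqref{eq:normal-bound}. For \eqref{eq:grad-obj}, I would first remove the manifold from the picture. Since $\grad\varphi^t(\bx) = P_{T_{\bx}\Mcal^N}(\nabla\varphi^t(\bx))$ is an orthogonal projection of $\nabla\varphi^t(\bx)$, it is nonexpansive, so $\|\grad\varphi^t(\bx)\| \le \|\nabla\varphi^t(\bx)\|$ and it suffices to bound the Euclidean gradient. I would then record two quadratic-form identities that both follow from $\nabla\varphi^t(\bx) = (I-\bW^t)\bx$ and the row-sum property $\sum_j W_{ij}^t = 1$, namely $2\varphi^t(\bx) = \iprod{\bx}{(I-\bW^t)\bx}$ and $\|\nabla\varphi^t(\bx)\|^2 = \iprod{\bx}{(I-\bW^t)^2\bx}$. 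Under Assumption \ref{assum-w} the symmetric matrix $I-\bW^t$ has spectrum contained in $[0,L_t]$ with $L_t = 1-\lambda_N(W^t)$, hence the operator inequality $(I-\bW^t)^2 \preceq L_t(I-\bW^t)$ holds (for each eigenvalue $\mu\in[0,L_t]$ one has $\mu^2 \le L_t\mu$). Pairing this with $\bx$ columnwise gives $\|\nabla\varphi^t(\bx)\|^2 \le L_t\iprod{\bx}{(I-\bW^t)\bx} = 2L_t\varphi^t(\bx)$, which is \eqref{eq:grad-obj}. This part is routine once phrased spectrally.

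For \eqref{eq:grad-normal} I would argue node by node. Setting $v := P_{N_{x_i}\Mcal}(\nabla\varphi_i^t(\bx))$, the fact that orthogonal projection onto $N_{x_i}\Mcal$ is self-adjoint and idempotent gives $\|v\|^2 = \iprod{v}{\nabla\varphi_i^t(\bx)}$, and the row-sum property rewrites the gradient as $\nabla\varphi_i^t(\bx) = \sum_{j=1}^N W_{ij}^t(x_i-x_j)$, so $\|v\|^2 = \sum_{j=1}^N W_{ij}^t\iprod{v}{x_i-x_j}$. The crucial step is to apply the normal inequality \eqref{eq:normal-bound} to the normal vector $-v \in N_{x_i}\Mcal$ and the manifold point $x_j$, which yields $\iprod{v}{x_i-x_j} \le \frac{\|v\|}{4\gamma}\|x_i-x_j\|^2$; summing and cancelling one power of $\|v\|$ produces the quadratic estimate $\|v\| \le \frac{1}{4\gamma}\sum_j W_{ij}^t\|x_i-x_j\|^2 \le \frac{1}{\gamma}\|\bx-\bar{\bx}\|_{F,\infty}^2$, using $\|x_i-x_j\| \le 2\|\bx-\bar{\bx}\|_{F,\infty}$. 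Factoring out $\|\bx-\bar{\bx}\|_{F,\infty}^{1/2}$ and bounding it then yields the claimed $3/2$ exponent with constant $\sqrt{1/\gamma}$: in the small-consensus-error regime $\|\bx-\bar{\bx}\|_{F,\infty}\le\gamma$ this is immediate, while for general $\bx$ one interpolates (geometric mean) against the elementary linear bound $\|v\| \le \|\nabla\varphi_i^t(\bx)\| \le 2\|\bx-\bar{\bx}\|_{F,\infty}$ coming from nonexpansiveness and the triangle inequality.

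The main obstacle is \eqref{eq:grad-normal}, and specifically producing the fractional $3/2$ power: the normal inequality only delivers a quadratic rate in $\|\bx-\bar{\bx}\|_{F,\infty}$, so the exponent has to be manufactured by combining the quadratic bound with a linear one (or by restricting to the relevant neighborhood). Two small points need care here, namely applying \eqref{eq:normal-bound} to $-v$ rather than $v$ so that the inequality points in the usable direction, and tracking that the per-node differences $\|x_i-x_j\|$ are uniformly controlled by the consensus error $\|\bx-\bar{\bx}\|_{F,\infty}$. By contrast, \eqref{eq:grad-obj} is comparatively direct.
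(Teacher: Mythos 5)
Your proposal is correct, and the two halves relate to the paper differently. For \eqref{eq:grad-obj} you take a genuinely different route: the paper writes $\nabla\varphi^t(\bx)=(I-\bW^t)\bx$, invokes the descent lemma with Lipschitz modulus $L_t$ at the point $\bx-\tfrac{1}{L_t}\nabla\varphi^t(\bx)$, and uses $\varphi^t\geq 0$ to conclude $\|\nabla\varphi^t(\bx)\|^2\leq 2L_t\varphi^t(\bx)$, whereas you prove the operator inequality $(I-\bW^t)^2\preceq L_t(I-\bW^t)$ directly from the spectrum of $W^t$ and pair it with $2\varphi^t(\bx)=\iprod{\bx}{(I-\bW^t)\bx}$. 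Both are valid; yours is arguably more self-contained (no appeal to the descent lemma), while the paper's argument would survive even without the explicit quadratic structure of $\varphi^t$. For \eqref{eq:grad-normal} your argument is essentially the paper's: set $v=P_{N_{x_i}\Mcal}(\nabla\varphi_i^t(\bx))$, write $\|v\|^2=\sum_j W_{ij}^t\iprod{v}{x_i-x_j}$, and apply the normal inequality \eqref{eq:normal-bound} together with $\|x_i-x_j\|\leq 2\|\bx-\bar{\bx}\|_{F,\infty}$; your explicit remark that one must apply \eqref{eq:normal-bound} to $-v$ so the inequality points the right way is a point of care the paper glosses over (harmlessly, since $N_{x_i}\Mcal$ is a linear space). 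The only wrinkle is the closing step: the paper substitutes $\|v\|\leq\|\bx-\bar{\bx}\|_{F,\infty}$ into $\|v\|^2\leq\frac{\|v\|}{\gamma}\|\bx-\bar{\bx}\|_{F,\infty}^2$, a bound that is itself only immediate with an extra factor of $2$, while your geometric-mean interpolation against $\|v\|\leq 2\|\bx-\bar{\bx}\|_{F,\infty}$ yields the constant $\sqrt{2/\gamma}$ rather than $\sqrt{1/\gamma}$ outside the regime $\|\bx-\bar{\bx}\|_{F,\infty}\leq\gamma$. This is a constant-factor discrepancy that the paper's own proof shares and that is immaterial where the lemma is invoked (Theorem \ref{thm:rsi} and Lemma \ref{lem:stay-neighborhood-rsi} already restrict to such a neighborhood), so I would not count it as a gap.
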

\begin{proof}
    Note that $\nabla \varphi^t(\bx) = (I - \bW^t)\bx$ is Lipschitz continuous with modulus $L_t$. Then, it holds that
    \[ \begin{aligned}
        \varphi^t(\bx - \frac{1}{L_t} \nabla \varphi^t(\bx)) & \leq \varphi^t(\bx) + \iprod{\nabla \varphi^t(\bx)}{-\frac{1}{L_t} \nabla \varphi^t(\bx)} + \frac{1}{2L_t}\|\nabla \varphi^t(\bx)\|^2 \\
        & = \varphi^t(\bx) - \frac{1}{2L_t}\|\nabla \varphi^t(\bx)\|^2.
    \end{aligned}
      \]
    By the fact that $\varphi^t(\bx - \frac{1}{L_t} \nabla \varphi^t(\bx)) \geq 0$,
    we have
    \[ \begin{aligned}
        \| \grad \varphi^t(\bx) \|^2  \leq \|\nabla \varphi^t(\bx)\|^2 
         \leq 2 L_t \varphi^t(\bx). 
    \end{aligned} 
    \]
    By \eqref{eq:normal-bound}, one has
    \[ \begin{aligned}
        \| P_{N_{x_i}\Mcal}(\nabla \varphi_i^t(\bx_k))\|^2 & = \iprod{P_{N_{x_i}\Mcal}(\nabla \varphi_i^t(\bx_k))}{P_{N_{x_i}\Mcal}(\nabla \varphi_i^t(\bx_k))} \\
        & = \iprod{P_{N_{x_i}\Mcal}(\nabla \varphi_i^t(\bx_k))}{\sum_{j=1}^N W_{ij}^t(x_i - x_j)} \\
        & \leq \sum_{j=1}^N W_{ij}^t \frac{\| P_{N_{x_i}\Mcal}(\nabla \varphi_i^t(\bx_k))\|}{4\gamma} \|x_i - x_j\|^2 \\
        & \leq \frac{ \|\bx-\bar{\bx}\|_{F,\infty}}{4\gamma} \cdot (4  \| \bx - \bar{\bx} \|_{F, \infty}^2) \\
        & = \frac{1}{\gamma}  \| \bx - \bar{\bx} \|_{F, \infty}^3,
    \end{aligned} \]
    which gives \eqref{eq:grad-normal}. The proof is completed.
\end{proof}

The estimate \eqref{eq:grad-obj} coincides with the result for the Stiefel manifold \cite[Lemma 10]{chen2021local}.  It is worth noting that for any $x\in {\rm St}(d,r)$ and $y\in \R^{d\times r}$, we have $P_{N_x {\rm St}(d,r)}(y) = x(x^\top y + y^\top x)/2$. In the case where $\Mcal$ is the Stiefel manifold, we can obtain a stronger version of \eqref{eq:grad-normal}, which is given by:
\[ \begin{aligned}
    P_{N_{x_i}\Mcal}(\nabla \varphi^t_i(\bx_k)) & = x_i(x_i^\top (x_i - \sum_{j=1}^N W_{ij}^t x_j ) + (x_i - \sum_{j=1}^N W_{ij}^t x_j )^\top x_i)/2 \\
    & = x_i \sum_{j=1}^NW_{ij}^t(x_i - x_j)^\top (x_i - x_j)  \\
    & \leq \|\bx - \bar{\bx}\|_{F, \infty}^2.
\end{aligned}
  \]

With the above lemma, we present a more general form of the restricted secant inequality.
\begin{theorem} \label{thm:rsi}
    For any $\nu \in [0,1]$, $t \geq 1$, and any $x\in \mathcal{M}^N$ with $\|\bx - \bar{\bx}\|_{F, \infty}^2 \leq \gamma $ and $ \|\bx - \bar{\bx}\|^2 \leq \frac{N}{4M_2^2}$,
\begin{align}
    \iprod{\bx - \bar{\bx}}{\grad \varphi^t(\bx)} & \geq \frac{\Phi}{2L_t}\|\grad \varphi^t(\bx)\|^2, \label{eq:rsi-1}   \\
     \iprod{\bx - \bar{\bx}}{\grad \varphi^t(\bx)} & \geq \nu\cdot \frac{\Phi}{2L_t}\|\grad \varphi^t(\bx)\|^2 + (1-\nu)\gamma_R \|\bx - \bar{\bx}\|^2, \label{eq:rsi}
\end{align}
where $\Phi := 2\left(1 - \frac{\|\bx - \bar{\bx}\|_{F,\infty}^2}{2\gamma} \right) > 1$ and $\gamma_R := (1 - \frac{\|\bx - \bar{\bx}\|_{F, \infty}^2}{\gamma})\frac{\mu_t}{4}$.
\end{theorem}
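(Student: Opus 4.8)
The plan is to establish the sharp gradient-type inequality \eqref{eq:rsi-1} first, and then to obtain the interpolated inequality \eqref{eq:rsi} for free as a convex combination of \eqref{eq:rsi-1} with the quadratic-type bound \eqref{eq:RSI-R} that has already been proved. Throughout I would write $\rho := \|\bx - \bar{\bx}\|_{F,\infty}^2/\gamma$, so that $\Phi = 2-\rho$ and $\gamma_R = (1-\rho)\frac{\mu_t}{4}$; the standing hypothesis $\|\bx - \bar{\bx}\|_{F,\infty}^2 \le \gamma$ forces $\rho \le 1$ and hence $\Phi \ge 1$.

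For \eqref{eq:rsi-1}, the essential point is to sharpen the lower bound on $\iprod{\bx - \bar{\bx}}{\grad \varphi^t(\bx)}$ from the $(1-\rho)\varphi^t(\bx)$ of \eqref{eq:grad-decomp} to the larger $\Phi\,\varphi^t(\bx) = (2-\rho)\varphi^t(\bx)$. First I would start from the orthogonal decomposition \eqref{eq:grad-prod}. Then I would evaluate the Euclidean term \emph{exactly}: since $\nabla \varphi^t(\bx) = (I - \bW^t)\bx$ and $\varphi^t(\bx) = \tfrac12 \iprod{\bx}{(I-\bW^t)\bx}$ (a quadratic with $\varphi^t(0)=0$ and the stated gradient), and since $(I-\bW^t)\bar{\bx} = 0$ because $W^t \mathbf{1}_N = \mathbf{1}_N$ gives $\bW^t \bar{\bx} = \bar{\bx}$, the symmetry of $I - \bW^t$ yields
\[ \iprod{\nabla \varphi^t(\bx)}{\bx - \bar{\bx}} = \iprod{(I-\bW^t)\bx}{\bx} = 2\varphi^t(\bx). \]
Combining this identity with the normal-component bound \eqref{eq:grad-decomp-normal}, namely $\iprod{P_{N_{\bx} \Mcal^N}(\nabla \varphi^t(\bx))}{\bx - \bar{\bx}} \le \rho\,\varphi^t(\bx)$, and substituting into \eqref{eq:grad-prod} gives $\iprod{\bx - \bar{\bx}}{\grad \varphi^t(\bx)} \ge (2-\rho)\varphi^t(\bx) = \Phi\,\varphi^t(\bx)$. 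Finally I would invoke \eqref{eq:grad-obj}, i.e. $\varphi^t(\bx) \ge \frac{1}{2L_t}\|\grad \varphi^t(\bx)\|^2$, to convert this into \eqref{eq:rsi-1}.

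For \eqref{eq:rsi}, I would interpolate the two available lower bounds. The first is \eqref{eq:rsi-1} just established, $\iprod{\bx - \bar{\bx}}{\grad \varphi^t(\bx)} \ge \frac{\Phi}{2L_t}\|\grad \varphi^t(\bx)\|^2$; the second is \eqref{eq:RSI-R}, $\iprod{\bx - \bar{\bx}}{\grad \varphi^t(\bx)} \ge \gamma_R \|\bx - \bar{\bx}\|^2$, which is valid precisely under the theorem's two hypotheses on $\|\bx - \bar{\bx}\|_{F,\infty}$ and $\|\bx - \bar{\bx}\|$. For any $\nu \in [0,1]$ I would write $\iprod{\bx - \bar{\bx}}{\grad \varphi^t(\bx)} = \nu\,\iprod{\bx - \bar{\bx}}{\grad \varphi^t(\bx)} + (1-\nu)\,\iprod{\bx - \bar{\bx}}{\grad \varphi^t(\bx)}$ and apply \eqref{eq:rsi-1} to the first copy and \eqref{eq:RSI-R} to the second, which yields \eqref{eq:rsi} at once.

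The only genuinely nontrivial step is the exact evaluation $\iprod{\nabla \varphi^t(\bx)}{\bx - \bar{\bx}} = 2\varphi^t(\bx)$: this is what produces the sharp constant $\Phi = 2-\rho$ rather than the weaker $1-\rho$ appearing in \eqref{eq:grad-decomp}, the factor $2$ being exactly the homogeneity factor from differentiating the quadratic $\varphi^t$ together with the fact that $\bar{\bx}$ lies in the null space of $I-\bW^t$. Everything else is a direct assembly of the cited inequalities \eqref{eq:grad-prod}, \eqref{eq:grad-decomp-normal}, \eqref{eq:grad-obj}, and \eqref{eq:RSI-R}, so I anticipate no real obstacle beyond keeping track of the constants.
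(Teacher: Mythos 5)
Your proposal is correct and follows the same overall route as the paper: bound the inner product below by a multiple of $\varphi^t(\bx)$ via the decomposition \eqref{eq:grad-prod} and the normal inequality \eqref{eq:grad-decomp-normal}, convert $\varphi^t(\bx)$ into $\|\grad\varphi^t(\bx)\|^2/(2L_t)$ via \eqref{eq:grad-obj}, and obtain \eqref{eq:rsi} as a $\nu$-convex combination with \eqref{eq:RSI-R}. The one place you genuinely add something is the step the paper glosses over: the paper's proof claims the prefactor $\Phi = 2 - \|\bx-\bar{\bx}\|_{F,\infty}^2/\gamma$ ``by \eqref{eq:grad-decomp} and \eqref{eq:grad-obj}'', but \eqref{eq:grad-decomp} as stated only gives the weaker prefactor $1 - \|\bx-\bar{\bx}\|_{F,\infty}^2/\gamma$, which would not yield \eqref{eq:rsi-1}. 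Your exact evaluation $\iprod{\nabla\varphi^t(\bx)}{\bx-\bar{\bx}} = \iprod{(I-\bW^t)\bx}{\bx} = 2\varphi^t(\bx)$ (using the symmetry of $I-\bW^t$ and $(I-\bW^t)\bar{\bx}=0$) is precisely the missing ingredient that upgrades \eqref{eq:grad-decomp} to the constant $2-\rho$ and makes the theorem's statement consistent; it is correct under Assumption \ref{assum-w}. One trivial quibble: the hypothesis $\|\bx-\bar{\bx}\|_{F,\infty}^2\le\gamma$ only gives $\Phi\ge 1$, not the strict inequality $\Phi>1$ asserted in the statement, but that boundary case is immaterial to both your argument and the paper's.
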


\begin{proof}
    By \eqref{eq:grad-decomp} and \eqref{eq:grad-obj}, we have
    \[ \iprod{\bx - \bar{\bx}}{\grad \varphi^t(\bx)} \geq 2\left(1 - \frac{\|\bx - \bar{\bx}\|_{F, \infty}^2}{2 \gamma}\right) \frac{\|\grad \varphi^t(\bx)\|^2}{2L_t}. \]
    This gives \eqref{eq:rsi-1}. Combining the above inequality with \eqref{eq:RSI-R} yields \eqref{eq:rsi}.   
\end{proof}

\subsection{Locally linear convergence by the restricted secant inequality}
To prove the locally linear convergence of the Riemannian gradient descent \eqref{eq:rgd-alpha}, 
 our first step is to demonstrate that all the iterates remain within a neighborhood around $\mathcal{X}$ given appropriate initialization and step size. Subsequently, by utilizing the restricted secant inequality within this region, we establish the linear convergence rate.

Let us define the neighborhood 
 \be \label{eq:neighborhood} \mathcal{N}_{R}:= \mathcal{N}_{1} \cap \mathcal{N}_{2},\ee
 where
\begin{eqnarray}
     \mathcal{N}_{1} & := &\{ \bx : \| \bx -\bar{\bx}\|^2 \leq N \delta_{1}^2 \}, \\ \label{eq:neigh-R-1}
     \mathcal{N}_{2} & := &\{ \bx :  \| \bx -\bar{\bx}\|_{F,\infty} \leq \delta_{2} \}, \label{eq:neigh-R-2}
\end{eqnarray}
 and $\delta_{1}, \delta_{2}$ satisfy 
 \begin{align}
     \delta_{1} & \leq \min \left \{\frac{\delta_{2}}{4}, \frac{1}{2M_2 + 2L_p \sqrt{N} + 2L_t^2}, \sqrt{\frac{\gamma}{M_2}}, \frac{1}{2M_1\gamma} \right \}, \label{eq:delta-1} \\
     \delta_{2} & \leq \min\left\{\frac{1}{32}, \gamma, \frac{\sqrt{2\gamma}}{6}\right\}. \label{eq:delta-2}
 \end{align}

We will now establish the following lemma regarding the distance between $\bar{x}_k$ and $\bar{x}_{k+1}$, which plays a crucial role in characterizing the local neighborhood.
\begin{lemma} \label{lem:dist-manifold-mean}
    Let $\{\bx_k\}$ be the sequence generated by \eqref{eq:rgd-alpha}. For any $\bx_k, \bx_{k+1} \in \mathcal{N}_{1}$, it holds that 
    \be \label{eq:dist-manifold-mean}  
    \| \bar{x}_{k+1} - \bar{x}_{k}\| \leq  (2L_t^2 M_0 \alpha^2 + 2L_p \sqrt{N} \alpha) \delta_{1}^2. 
    \ee
\end{lemma}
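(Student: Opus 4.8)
The plan is to reduce $\|\bar{x}_{k+1}-\bar{x}_k\|$ to the displacement of the Euclidean means $\|\hat{x}_{k+1}-\hat{x}_k\|$ and then control the latter directly from the iteration \eqref{eq:rgd-alpha}. First I would record that $\bar{x}_k=P_{\Mcal}(\hat{x}_k)$ and $\bar{x}_{k+1}=P_{\Mcal}(\hat{x}_{k+1})$, and that both $\hat{x}_k$ and $\hat{x}_{k+1}$ lie in $\bar{U}_{\Mcal}(\gamma)$: indeed, by Lemma~\ref{lem:dist-consensus} together with $\bx_k,\bx_{k+1}\in\mathcal{N}_1$ we have ${\rm dist}(\hat{x}_\ell,\Mcal)=\|\hat{x}_\ell-\bar{x}_\ell\|\leq M_2\|\bx_\ell-\bar{\bx}_\ell\|^2/N\leq M_2\delta_1^2\leq\gamma$ for $\ell\in\{k,k+1\}$, using $\delta_1\leq\sqrt{\gamma/M_2}$ from \eqref{eq:delta-1}. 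This makes the local Lipschitz continuity \eqref{eq:lip-proj-alpha} of $P_{\Mcal}$ over $\bar{U}_{\Mcal}(\gamma)$ available, so that $\|\bar{x}_{k+1}-\bar{x}_k\|$ is controlled by $\|\hat{x}_{k+1}-\hat{x}_k\|$ up to the proximal-smoothness constant.

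Next I would expand one step of \eqref{eq:rgd-alpha} agent-wise. Writing $x_{i,k+1}=R_{x_{i,k}}(-\alpha\grad\varphi_i^t(\bx_k))$ and applying the retraction estimate \eqref{eq:retr-lip} of Proposition~\ref{prop:lip-retr}, I would record $x_{i,k+1}=x_{i,k}-\alpha\grad\varphi_i^t(\bx_k)+e_{i,k}$ with $\|e_{i,k}\|\leq M_0\alpha^2\|\grad\varphi_i^t(\bx_k)\|^2$. Averaging over $i$ and using the definition of $\hat{x}$ gives the clean identity $\hat{x}_{k+1}-\hat{x}_k=-\frac{\alpha}{N}\sum_{i=1}^N\grad\varphi_i^t(\bx_k)+\frac{1}{N}\sum_{i=1}^N e_{i,k}$, which isolates a first-order gradient-sum term and a second-order retraction-error term.

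The two pieces are then estimated separately. For the gradient-sum term the crucial point is that $\sum_{i=1}^N\nabla\varphi_i^t(\bx_k)=0$ by the doubly stochastic structure of $W$, so that $\sum_i\grad\varphi_i^t(\bx_k)$ is in fact quadratically small; quantitatively, Lemma~\ref{lem:grad-consensus-sum2} gives $\|\sum_i\grad\varphi_i^t(\bx_k)\|\leq 2\sqrt{N}L_p\|\bx_k-\bar{\bx}_k\|_{F,\infty}\|\bx_k-\bar{\bx}_k\|$, and substituting $\|\bx_k-\bar{\bx}_k\|_{F,\infty}\leq\|\bx_k-\bar{\bx}_k\|\leq\sqrt{N}\delta_1$ (from $\bx_k\in\mathcal{N}_1$) turns this contribution into a bound of order $L_p\sqrt{N}\alpha\delta_1^2$. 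For the retraction-error term I would use that $\bar{\bx}_k$ is a consensus point, whence $\nabla\varphi^t(\bx_k)=(I-\bW^t)(\bx_k-\bar{\bx}_k)$ and $\sum_i\|\grad\varphi_i^t(\bx_k)\|^2=\|\grad\varphi^t(\bx_k)\|^2\leq\|\nabla\varphi^t(\bx_k)\|^2\leq L_t^2\|\bx_k-\bar{\bx}_k\|^2\leq L_t^2N\delta_1^2$, so that $\frac{1}{N}\sum_i\|e_{i,k}\|$ is of order $L_t^2M_0\alpha^2\delta_1^2$. Combining these two estimates with the Lipschitz-continuity reduction from the first step yields \eqref{eq:dist-manifold-mean}.

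The main obstacle I anticipate is the bookkeeping of constants in the passage from $\hat{x}$ to $\bar{x}$: one must apply the local Lipschitz continuity of $P_{\Mcal}$ over $\bar{U}_{\Mcal}(\gamma)$ to the retraction-error increment while keeping the gradient-sum increment at the sharp constant, so that the two contributions assemble into exactly $2L_t^2M_0\alpha^2+2L_p\sqrt{N}\alpha$ rather than a looser multiple. The structural heart of the argument — and the reason the whole right-hand side is $O(\delta_1^2)$ rather than $O(\delta_1)$ — is the exact cancellation $\sum_i\nabla\varphi_i^t(\bx_k)=0$ encoded in Lemma~\ref{lem:grad-consensus-sum2}, which forces the first-order displacement of the mean to be second order in the consensus error.
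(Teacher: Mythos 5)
Your proposal follows essentially the same route as the paper's proof: reduce $\|\bar{x}_{k+1}-\bar{x}_k\|$ to $2\|\hat{x}_{k+1}-\hat{x}_k\|$ via the local $2$-Lipschitz continuity of $P_{\Mcal}$ on $\bar{U}_{\Mcal}(\gamma)$ (justified by Lemma~\ref{lem:dist-consensus} and $M_2\delta_1^2\leq\gamma$), then split the averaged increment into the retraction-error term bounded by Proposition~\ref{prop:lip-retr} with $\sum_i\|\grad\varphi_i^t(\bx_k)\|^2\leq L_t^2\|\bx_k-\bar{\bx}_k\|^2$ and the gradient-sum term bounded by Lemma~\ref{lem:grad-consensus-sum2}. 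The argument is correct and matches the paper's, including its correct identification of the cancellation $\sum_i\nabla\varphi_i^t(\bx_k)=0$ as the reason the bound is second order in $\delta_1$.
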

\begin{proof}
It follows from Lemma \ref{lem:dist-consensus} that $\|\hat{x}_k - \bar{x}_k\| \leq M_2\|\bx_k - \bar{\bx}_k\|^2/N \leq M_2 \delta_{1}^2$. Hence, by \cite[Lemma 3.1]{davis2020stochastic} and noting $M_2 \delta_{1}^2 <\gamma $, we have
\[ \begin{aligned}
   & \| \bar{x}_{k+1} - \bar{x}_k \| \leq 2 \|\hat{x}_{k+1} - \hat{x}_k \| \\
   = & 2 \|\frac{1}{N}\sum_{j=1}^N (x_{i,k+1} - x_{i,k}) \| \\
   \leq & 2 \|\frac{1}{N}\sum_{j=1}^N (x_{i,k+1} - x_{i,k} + \alpha \grad \varphi_i^t(\bx_k) ) \| + 2\| \frac{\alpha}{N} \sum_{i=1}^N \grad \varphi_i^t(\bx_k) \| \\
   \leq & \frac{2 M_0 \alpha^2}{N} \sum_{i=1}^N \|\grad \varphi_i^t(\bx_k) \|^2 + 2\| \frac{\alpha}{N} \sum_{i=1}^N \grad \varphi_i^t(\bx_k) \| \\
   \leq & \frac{2L_t^2 M_0 \alpha^2 + 2L_p \sqrt{N} \alpha}{N} \| \bx_k - \bar{\bx}_k\|^2\\
   \leq & (2L_t^2 M_0 \alpha^2 + 2L_p \sqrt{N} \alpha) \delta_{1}^2,
   \end{aligned} \]
   where the first inequality follows from the 2-Lipschitz continuity of $P_{\Mcal}$ over $\bar{U}_{\Mcal}(\gamma)$, the second inequality is from the triangle inequality, and the third inequality is derived from Proposition \ref{prop:lip-retr}.
\end{proof}

The aforementioned result generalizes the corresponding lemma for the Stiefel manifold presented in \cite[Lemma 12]{chen2021local}. Consequently, we can now demonstrate that the iterates produced by \eqref{eq:rgd-alpha} remain within $\mathcal{N}_R$ given sufficiently large $t$ and an appropriately chosen $\alpha$.
\begin{lemma} \label{lem:stay-neighborhood-rsi}
    Let $\alpha \leq \min \left\{ \frac{\nu \Phi \tilde{\gamma}_{R} (1-2 \tilde{\gamma}_{R})}{L_t(1-\tilde{\gamma}_{R})((1-2\tilde{\gamma}_{R})M_0^2L_t^2N\delta_{1}^2 + \tilde{\gamma}_{R})} ,1, \frac{1}{M_0} \right \}$ with $\tilde{\gamma}_{R}:= (1 -\nu) \gamma_{R}$ and $t \geq \log_{\sigma_2} \frac{1}{2\sqrt{N}}$. If $\bx_k \in \mathcal{N}_{R}$, then $\bx_{k+1} \in \mathcal{N}_{R}$. 
\end{lemma}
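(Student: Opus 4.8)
The plan is to verify the two defining conditions of $\mathcal{N}_{R} = \mathcal{N}_{1} \cap \mathcal{N}_{2}$ separately for $\bx_{k+1}$, handling the Frobenius consensus error (membership in $\mathcal{N}_{1}$) via the restricted secant inequality and the per-agent error (membership in $\mathcal{N}_{2}$) via an agent-wise estimate in the spirit of Lemma~\ref{lemma:stay-neigb}. First I would record that $\bx_k \in \mathcal{N}_{R}$ guarantees the standing hypotheses of Theorem~\ref{thm:rsi}: from \eqref{eq:delta-2} one has $\|\bx_k - \bar{\bx}_k\|_{F,\infty}^2 \leq \delta_{2}^2 \leq \gamma$, and from \eqref{eq:delta-1} one has $\|\bx_k - \bar{\bx}_k\|^2 \leq N\delta_{1}^2 \leq N/(4M_2^2)$, so the restricted secant inequality \eqref{eq:rsi} is available at $\bx_k$. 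I will also use that $\grad \varphi^t(\bx_k)$ is the tangential projection of $\nabla \varphi^t(\bx_k)$ and that $\nabla \varphi^t(\bx) = (I-\bW^t)(\bx - \bar{\bx})$, which together give $\|\grad \varphi^t(\bx_k)\| \leq L_t \|\bx_k - \bar{\bx}_k\|$.

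For the $\mathcal{N}_{1}$ part I would start from the optimality of the projection onto $\mathcal{X}$, namely $\|\bx_{k+1} - \bar{\bx}_{k+1}\| \leq \|\bx_{k+1} - \bar{\bx}_k\|$, and then apply Proposition~\ref{prop:lip-retr} to the update \eqref{eq:rgd-alpha} to write $\bx_{k+1} = \bx_k - \alpha\,\grad \varphi^t(\bx_k) + e_k$ with $\|e_k\| \leq M_0 \alpha^2 \|\grad \varphi^t(\bx_k)\|^2$. Expanding $\|\bx_{k+1} - \bar{\bx}_k\|^2$ and substituting the restricted secant inequality \eqref{eq:rsi} for $\iprod{\bx_k - \bar{\bx}_k}{\grad \varphi^t(\bx_k)}$ yields
\[ \|\bx_{k+1} - \bar{\bx}_k\|^2 \leq (1 - 2\alpha \tilde{\gamma}_{R})\|\bx_k - \bar{\bx}_k\|^2 + \Big(\alpha^2 - \alpha \nu \tfrac{\Phi}{L_t}\Big)\|\grad \varphi^t(\bx_k)\|^2 + 2\iprod{\bx_k - \bar{\bx}_k - \alpha\,\grad \varphi^t(\bx_k)}{e_k} + \|e_k\|^2. \]
The role of the step-size bound is precisely to force the right-hand side below $\|\bx_k - \bar{\bx}_k\|^2$: the negative multiple of $\|\grad \varphi^t(\bx_k)\|^2$ produced by the co-coercive term in \eqref{eq:rsi} supplies the slack that absorbs the quadratic retraction remainder, which I would control through $\|e_k\| \leq M_0\alpha^2 L_t^2 \|\bx_k-\bar{\bx}_k\|^2$ and $\|\grad \varphi^t(\bx_k)\|^2 \leq L_t^2 N\delta_{1}^2$ on $\mathcal{N}_{1}$. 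Collecting the cross term and $\|e_k\|^2$ and requiring the resulting coefficient of $\|\bx_k - \bar{\bx}_k\|^2$ to stay at most $1$ reproduces the stated threshold on $\alpha$; the factors $M_0^2 L_t^2 N\delta_{1}^2$ and $(1-2\tilde{\gamma}_{R})$ in the denominator trace back, respectively, to the bound on $\|e_k\|^2$ and to the contraction factor $1-2\alpha\tilde{\gamma}_R$. This gives $\|\bx_{k+1}-\bar{\bx}_{k+1}\|^2 \leq \|\bx_k-\bar{\bx}_k\|^2 \leq N\delta_{1}^2$, i.e. $\bx_{k+1}\in \mathcal{N}_{1}$.

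For the $\mathcal{N}_{2}$ part I would argue agent by agent as in Lemma~\ref{lemma:stay-neigb}, splitting $\|x_{i,k+1} - \bar{x}_{k+1}\| \leq \|x_{i,k+1} - \bar{x}_k\| + \|\bar{x}_{k+1} - \bar{x}_k\|$ and controlling the manifold-mean drift $\|\bar{x}_{k+1}-\bar{x}_k\|$ directly by Lemma~\ref{lem:dist-manifold-mean}. For $\|x_{i,k+1} - \bar{x}_k\|$, I would use Lemma~\ref{lem:lip-proj} to replace the retraction step $R_{x_{i,k}}(-\alpha\,\grad\varphi^t_i(\bx_k))$ by the projection $P_{\Mcal}\big((1-\alpha)x_{i,k} + \alpha\sum_{j}W_{ij}^t x_{j,k}\big)$ up to an $O(\alpha^2\|\nabla\varphi_i^t\|^2)$ error, bound the projection argument by a convex combination of $\|x_{i,k}-\bar{x}_k\| \leq \delta_{2}$ and $\|\sum_j W_{ij}^t x_{j,k} - \bar{x}_k\|$, and estimate the latter by $\sqrt{N}\sigma_2^t \delta_{2} + M_2\delta_{1}^2$ using $\|\hat{x}_k-\bar{x}_k\| \leq M_2\delta_{1}^2$ (Lemma~\ref{lem:dist-consensus}) and the row-sum decay of $W^t$. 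The connectivity condition $t \geq \log_{\sigma_2}\frac{1}{2\sqrt{N}}$ gives $\sqrt{N}\sigma_2^t \leq \tfrac12$, so the convex combination contracts $\delta_{2}$ to roughly $(1-\tfrac{\alpha}{2})\delta_{2}$; adding the two $O(\delta_{1}^2)$ remainders and invoking \eqref{eq:delta-1}--\eqref{eq:delta-2} keeps the total below $\delta_{2}$, yielding $\bx_{k+1}\in\mathcal{N}_{2}$.

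The main obstacle is the $\mathcal{N}_{1}$ estimate. The retraction contributes an error $e_k = O(\alpha^2\|\grad\varphi^t(\bx_k)\|^2)$ that is quadratic in the gradient, hence quartic in the consensus error, and it must be dominated by the contraction linear in $\alpha$ coming from the restricted secant inequality. The delicate point is that the only slack available for this cancellation is the co-coercive term $\nu\tfrac{\Phi}{2L_t}\|\grad\varphi^t(\bx_k)\|^2$ in \eqref{eq:rsi}; balancing it against $\|e_k\|$ and $\|e_k\|^2$ while simultaneously respecting the per-agent bound of $\mathcal{N}_{2}$ is exactly what forces the intricate step-size threshold in the statement, and checking that one admissible $\alpha$, together with the radii $\delta_1,\delta_2$ fixed by \eqref{eq:delta-1}--\eqref{eq:delta-2}, makes both invariances hold at once is the crux of the argument.
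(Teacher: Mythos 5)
Your overall two-part structure ($\mathcal{N}_1$ via the restricted secant inequality, $\mathcal{N}_2$ agent by agent) matches the paper, and your $\mathcal{N}_1$ argument is essentially the paper's: expand $\|\bx_{k+1}-\bar{\bx}_k\|^2$ around $\bx_k-\alpha\grad\varphi^t(\bx_k)$ using Proposition~\ref{prop:lip-retr}, absorb the retraction remainder and its cross term into the co-coercive slack $\alpha\nu\Phi\|\grad\varphi^t(\bx_k)\|^2/L_t$ from \eqref{eq:rsi} (the paper does this with the split $(a+b)^2\le(1+1/\beta)a^2+(1+\beta)b^2$ and the choice $\beta=(1-2\alpha\tilde{\gamma}_R)/(\alpha\tilde{\gamma}_R)$, which is the optimized form of your Cauchy--Schwarz on the cross term), and read off the step-size threshold. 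That part is sound.

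The $\mathcal{N}_2$ part as you route it has a genuine gap. You propose to replace $R_{x_{i,k}}(-\alpha\grad\varphi_i^t(\bx_k))$ by $P_{\Mcal}\bigl((1-\alpha)x_{i,k}+\alpha\sum_j W_{ij}^t x_{j,k}\bigr)$ via Lemma~\ref{lem:lip-proj} and then bound the \emph{argument} of the projection by a convex combination. But to compare $P_{\Mcal}(z)$ with $\bar{x}_k=P_{\Mcal}(\hat{x}_k)$ you must pass through the local Lipschitz constant of $P_{\Mcal}$ on $\bar{U}_{\Mcal}(\gamma)$, which is $2$, not $1$. This turns the dominant term into roughly $2(1-\alpha)\|x_{i,k}-\hat{x}_k\|\approx 2(1-\alpha)\delta_2$, which exceeds $\delta_2$ whenever $\alpha<1/2$ --- and the admissible step sizes in the lemma can be far smaller than $1/2$, since the stated bound on $\alpha$ involves $\nu\Phi\tilde{\gamma}_R(1-2\tilde{\gamma}_R)/L_t(\cdots)$. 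So the claimed contraction to $(1-\tfrac{\alpha}{2})\delta_2$ does not survive the projection step. The paper avoids this entirely: it applies Proposition~\ref{prop:lip-retr} directly (no projection), writes the ambient point exactly as
\[
x_{i,k}-\alpha\grad\varphi_i^t(\bx_k)-\bar{x}_k=(1-\alpha)(x_{i,k}-\bar{x}_k)+\alpha(\hat{x}_k-\bar{x}_k)+\alpha\textstyle\sum_j W_{ij}^t(x_{j,k}-\hat{x}_k)+\alpha P_{N_{x_{i,k}}\Mcal}(\nabla\varphi_i^t(\bx_k)),
\]
so the leading coefficient is exactly $(1-\alpha)$, bounds the normal component by $\tfrac{\alpha}{\sqrt{\gamma}}\delta_2^{3/2}$ via \eqref{eq:grad-normal}, and pays only the second-order retraction error $M_0\alpha^2\|\grad\varphi_i^t(\bx_k)\|^2$. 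All correction terms then carry a factor of $\alpha$ and are absorbed into the $\tfrac{\alpha}{2}\delta_2$ slack using \eqref{eq:delta-1}--\eqref{eq:delta-2}. To repair your argument you would need to keep the iterate in the ambient space as the paper does, rather than re-projecting.
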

\begin{proof} 
    By the definition of $\mathcal{N}_R$, it holds that 
    \be  \label{eq:NR-rsi}\|\bx - \bar{\bx}\|^2_{F, \infty} \leq \gamma, \quad {\rm and} \quad \|\bx - \bar{\bx}\|^2 \leq \frac{N}{4M_2^2}. \ee
    Then, we have
    \be \label{eq:contrac-xk} \begin{aligned}
        & \|\bx_{k+1} - \bar{\bx}_{k+1}\|^2  \leq \| \bx_{k+1} - \bar{\bx}_k \|^2 \\
        = & \sum_{i=1}^N \| R_{x_{i,k}}(-\alpha \grad \varphi_i^t(\bx)) - \bar{x}_k  \|^2 \\
        \leq & \sum_{i=1}^N(1+ \frac{1}{\beta}) \| x_{i,k} - \alpha \grad \varphi_i^t(\bx_k) -\bar{x}_k \|^2 + (1+ \beta) (M_0 \|\alpha \grad \varphi^t(\bx_k) \|^2)^2 \\
        \leq & (1+ \frac{1}{\beta}) \left[ \|\bx_k - \bar{\bx}_k\|^2 - 2\alpha \iprod{\grad \varphi^t(\bx_k)}{\bx_k - \bar{\bx}_k} \right] \\ 
        & + \left((1+\beta)M_0^2\|\alpha \grad \varphi^t(\bx_k)\|^2 + 1 + \frac{1}{\beta} \right)\alpha^2\| \grad \varphi^t(\bx_k)\|^2 \\
        \leq & (1+\frac{1}{\beta})(1-2\alpha(1-\nu)\gamma_R) \|\bx_k - \bar{\bx}_k\|^2 \\
        & + \left(\left((1+\beta)M_0^2\alpha^2 L_{t}^2 N \delta_{1}^2 + 1 + \frac{1}{\beta} \right)\alpha^2 -  \frac{\alpha \nu\Phi}{L_t} \right) \| \grad \varphi^t(\bx_k)\|^2,
    \end{aligned} \ee
    where the second inequality is from \eqref{eq:retr-lip} and $(a+b)^2 \leq (1+1/\beta)a^2 + (1+\beta)b^2$ for any $a,b,\beta > 0$, the third inequality is due to \eqref{eq:rsi}, and the last inequality comes from $\| \grad \varphi^t(\bx_k) \|^2 \leq L_t \| \bx_k - \bar{\bx}_k \|^2$ and $\bx_k \in \mathcal{N}_{1}$. Let 
    $\beta = (1 - 2\alpha(1-\nu) \gamma_R)/ (\alpha(1-\nu) \gamma_R)$
    and $\alpha \leq \min\left \{\frac{\nu \Phi \tilde{\gamma}_R (1-2 \tilde{\gamma}_R)}{L_t(1-\tilde{\gamma}_R)((1-2\tilde{\gamma}_R)M_0^2L_t^2N\delta_{1}^2 + \tilde{\gamma}_R)} , 1\right\}$, 
    we have
    \[   \|\bx_{k+1} - \bar{\bx}_{k+1}\|^2  \leq \| \bx_{k} - \bar{\bx}_k \|^2.  \]
    This implies that $\bx_{k+1}\in \mathcal{N}_{1}$. For each $i \in [N]$, it holds
    \[  \begin{aligned}
       & \|x_{i,k+1} -  \bar{x}_k \|  \\
       \leq & \| x_{i,k} - \alpha \grad \varphi_i^t(\bx_k) - \bar{x}_k \| + M_0 \| \alpha \grad \varphi_i^t(\bx_k)\|^2 \\
       = & \| (1-\alpha)(x_{i,k}-\bar{x}_k) + \alpha (\hat{x}_k - \bar{x}_k) + \alpha \sum_{j=1}^N W_{ij}^t(x_{j,k} -\hat{x}_k) + \alpha P_{N_{x_{i,k}}\Mcal}(\nabla \varphi_i^t(\bx_k)) \| \\
       & + M_0 \| \alpha \grad \varphi_i^t(\bx_k)\|^2 \\
       \leq & (1-\alpha) \delta_{2} + M_2 \alpha \delta_{1}^2 + \frac{\alpha}{2}\delta_{2} + \frac{\alpha}{\sqrt{\gamma}} \delta_{2}^{\frac{3}{2}} +  4M_0\alpha^2 \delta_{2}^2   \\
       = & (1-\frac{\alpha}{2}) \delta_{2} + \frac{\alpha}{\sqrt{\gamma}}\delta_{2}^{\frac{3}{2}} + 4M_0 \alpha^2 \delta_{2}^2 + M_2 \alpha \delta_{1}^2,
    \end{aligned}  \]
    where the first inequality is from \eqref{eq:retr-lip} and the second inequality is due to \eqref{eq:grad-obj} and \eqref{eq:grad-normal}. By Lemma \ref{lem:dist-manifold-mean}, we have
    \[ \begin{aligned}
        & \|x_{i,k+1} - \bar{x}_{k+1} \| \leq \|x_{i,k+1} - \bar{x}_k\| + \| \bar{x}_k - \bar{x}_{k+1} \| \\
        \leq & (1-\frac{\alpha}{2}) \delta_{2} + \frac{\alpha}{\sqrt{\gamma}}\delta_{2}^{\frac{3}{2}} + 4M_0 \alpha^2 \delta_{2}^2 + M_2 \alpha \delta_{1}^2 + (2L_t^2 M_0 \alpha^2 + 2L_p \sqrt{N} \alpha) \delta_{1}^2 \\
        = & (1-\frac{\alpha}{2}) \delta_{2} + \frac{\alpha}{\sqrt{\gamma}}\delta_{2}^{\frac{3}{2}} + 4M_0 \alpha^2 \delta_{2}^2 + (M_2 + 2L_p\sqrt{N}  + 2L_t^2 M_0^2 \alpha^2 ) \alpha \delta_{1}^2 \leq \delta_{2}, 
    \end{aligned} \]
    where the last inequality is from \eqref{eq:delta-1}, \eqref{eq:delta-2}, and $\alpha \leq 1 /M_0$. This implies that $\bx_{k+1} \in \mathcal{N}_{2}$. Hence, $\bx_{k+1} \in \mathcal{N}_R$. 
\end{proof}

Compared to the result in \cite{chen2021local}, the neighborhood $\mathcal{N}_R$ in Lemma \ref{lem:stay-neighborhood-rsi} is much more complicated due to the absence of the convex-like property of a general projection, i.e.,
\be \label{eq:proj-cvx} \| R_{x}(d) - y \| \leq \|x+ d - y\|, \ee
where $x, y \in \Mcal$ and $d\in T_x{\Mcal}$. Such inequality has been shown to hold for $\Mcal = {\rm St}(d,r)$ when $R$ is the projection operator (i.e., polar decomposition).  Another difference from \cite{chen2021local} is that the definition of
 $\delta_1$ in $\mathcal{N}_R$ depends on the number of agents, $N$. This dependence arises from the weaker results between consensus and gradients presented in Lemma \ref{lem:grad-consensus-sum2} and Lemma \ref{lem:grad-consensus-sum}. 

By applying the restricted secant inequality to the neighborhood $\mathcal{N}_R$ (which implies \eqref{eq:NR-rsi}),  we can establish the following result on linear convergence.
\begin{theorem}
Let $\{ \bx_{k} \}$ be the sequence generated by the Riemannian gradient descent and $\nu \in (0,1)$. If $t \geq \log_{\sigma_2} \frac{1}{2\sqrt{N}}$, $\bx_0 \in \mathcal{N}_R$, and $\alpha$ satisfies
$$\alpha \leq \min \left\{ \frac{\nu \Phi \tilde{\gamma}_R (1-2 \tilde{\gamma}_R)}{L_t(1-\tilde{\gamma}_R)((1-2\tilde{\gamma}_R)M_0^2L_t^2N\delta_{1}^2 + \tilde{\gamma}_R)} ,1, \frac{1}{M_0} \right \},$$ then $\{\bx_k\}$ converges linearly to the optimal solution set of problem \eqref{prob:original}, 
\be \label{eq:linear-rsi} \|\bx_{k+1} - \bar{\bx}_{k+1}\|^2 \leq (1-\alpha \tilde{\gamma}_R) \| \bx_k - \bar{\bx}_k \|^2 \leq (1-\alpha \tilde{\gamma}_R)^{k+1} \| \bx_0 - \bar{\bx}_0 \|^2. \ee
\end{theorem}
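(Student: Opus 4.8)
The plan is to manufacture the sharp one-step contraction factor $1-\alpha\tilde{\gamma}_R$ out of the very chain of estimates already assembled in the proof of Lemma \ref{lem:stay-neighborhood-rsi}, where that chain was exploited only to certify boundedness. First I would invoke Lemma \ref{lem:stay-neighborhood-rsi} and run an induction to show $\bx_k \in \mathcal{N}_R$ for every $k$: the base case is the hypothesis $\bx_0 \in \mathcal{N}_R$, and the inductive step is exactly the conclusion of that lemma under the stated bounds on $t$ and $\alpha$. Remaining in $\mathcal{N}_R$ (hence in $\mathcal{N}_1$) is what keeps the two regularity tools available at each iterate, namely the restricted secant inequality \eqref{eq:rsi} and the gradient bound $\|\grad \varphi^t(\bx_k)\|^2 \leq L_t\|\bx_k - \bar{\bx}_k\|^2$ together with $\|\bx_k - \bar{\bx}_k\|^2 \leq N\delta_{1}^2$.

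Next I would reproduce the core estimate \eqref{eq:contrac-xk}. Starting from the projection inequality $\|\bx_{k+1} - \bar{\bx}_{k+1}\|^2 \leq \|\bx_{k+1} - \bar{\bx}_k\|^2$ (valid because $\bar{\bx}_{k+1}$ is the nearest point of $\mathcal{X}$ to $\bx_{k+1}$), I would split off the retraction remainder using Proposition \ref{prop:lip-retr} and the Young-type inequality $(a+b)^2 \leq (1+1/\beta)a^2 + (1+\beta)b^2$, expand the Euclidean term, and substitute the restricted secant inequality \eqref{eq:rsi}. This produces
\[ \|\bx_{k+1} - \bar{\bx}_{k+1}\|^2 \leq (1+\tfrac{1}{\beta})(1-2\alpha\tilde{\gamma}_R)\|\bx_k - \bar{\bx}_k\|^2 + \Big[\big((1+\beta)M_0^2\alpha^2 L_t^2 N\delta_{1}^2 + 1 + \tfrac{1}{\beta}\big)\alpha^2 - \tfrac{\alpha\nu\Phi}{L_t}\Big]\|\grad \varphi^t(\bx_k)\|^2. \]

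The decisive step is the coefficient bookkeeping, and it is here that the rate $1-\alpha\tilde{\gamma}_R$ is generated. I would set $\beta = (1-2\alpha\tilde{\gamma}_R)/(\alpha\tilde{\gamma}_R)$, the same choice as in Lemma \ref{lem:stay-neighborhood-rsi}, for which $(1+1/\beta)(1-2\alpha\tilde{\gamma}_R) = 1-\alpha\tilde{\gamma}_R$ by direct algebra; this is legitimate provided $1-2\alpha\tilde{\gamma}_R > 0$, which follows from $\alpha \leq 1$ and $\tilde{\gamma}_R = (1-\nu)\gamma_R < \gamma_R \leq \mu_t/4 < 1/2$. The first entry in the prescribed upper bound on $\alpha$ is engineered precisely so that the bracketed coefficient multiplying $\|\grad \varphi^t(\bx_k)\|^2$ becomes nonpositive, letting that entire term be discarded. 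What survives is the per-step estimate $\|\bx_{k+1} - \bar{\bx}_{k+1}\|^2 \leq (1-\alpha\tilde{\gamma}_R)\|\bx_k - \bar{\bx}_k\|^2$, and telescoping over $k$ delivers the geometric bound \eqref{eq:linear-rsi}.

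The main obstacle I anticipate is not any individual inequality but the simultaneous consistency of the demands placed on $\alpha$ and $\beta$: the choice of $\beta$ must stay strictly positive (forcing $1-2\alpha\tilde{\gamma}_R>0$), the threshold on $\alpha$ must independently drive the gradient-term coefficient below zero, and both must coexist with the step-size cap $\alpha \leq \min\{1,1/M_0\}$ inherited from the invariance lemma. Verifying that the stated $\min\{\cdots\}$ meets all of these at once — and that $\Phi > 1$ and $\gamma_R > 0$ throughout $\mathcal{N}_R$ keep the bounds meaningful — is the genuinely careful part; everything else is a faithful transcription of the computation already performed for Lemma \ref{lem:stay-neighborhood-rsi}.
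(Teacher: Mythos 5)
Your proposal is correct and follows essentially the same route as the paper: Lemma \ref{lem:stay-neighborhood-rsi} keeps all iterates in $\mathcal{N}_R$ by induction, the contraction estimate \eqref{eq:contrac-xk} is reused with $\beta = (1-2\alpha\tilde{\gamma}_R)/(\alpha\tilde{\gamma}_R)$ so that $(1+1/\beta)(1-2\alpha\tilde{\gamma}_R)=1-\alpha\tilde{\gamma}_R$, and the step-size cap makes the coefficient of $\|\grad\varphi^t(\bx_k)\|^2$ nonpositive. Your additional checks that $1-2\alpha\tilde{\gamma}_R>0$ and that the various constraints on $\alpha$ are simultaneously satisfiable are sound and, if anything, more explicit than the paper's own terse argument.
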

\begin{proof}
By Lemma \ref{lem:stay-neighborhood-rsi}, all iterates stay in the neighborhood $\mathcal{N}_R$. Then, using \eqref{eq:contrac-xk} and noting $\beta = (1 - 2\alpha \tilde{\gamma}_R)/ (\alpha\tilde{\gamma}_R)$, we have
\[ \begin{aligned}
    \| \bx_{k+1} - \bar{\bx}_{k+1}\|^2 & \leq (1 + \frac{1}{\beta})(1 - 2\alpha\tilde{\gamma}_R) \|\bx_k - \bar{\bx}_k\|^2 \\
    & \leq (1-\alpha\tilde{\gamma}_R) \| \bx_k - \bar{\bx}_k\|^2.
\end{aligned}
  \]
  This completes the proof.
\end{proof}
\begin{remark}
    It follows that as $\bx_k \rightarrow \mathcal{X}$, we have  $\Phi \rightarrow 2$. If we set $\nu=1/2$  choose an admissible step size  $\alpha = 1$, the convergence rate 
    \[ \sqrt{1 - \alpha \tilde{\gamma}_R} \rightarrow \sqrt{1- \frac{\mu_t}{8}}, \]
    which is worse than the rate $\sigma_2^t$ in the Euclidean setting and the rate $\sqrt{1-\mu_t}$ in the case of $\Mcal$ being the Stiefel manifold. This is due to the weak bound between the gradients and the consensus error, as well as the absence of  the convex-like inequality \eqref{eq:proj-cvx}. 
\end{remark}

\section{Connection between local Lipschitz continuity and RSI}
In Sections \ref{sec:Lip} and \ref{sec:rsi}, we have demonstrated the local linear convergence of the Riemannian gradient descent by establishing the conditions of local Lipschitz continuity of $P_{\Mcal}$ and the restricted secant inequality, respectively. It is natural to inquire about any potential connections or implications between these two conditions with respect to problem \eqref{prob:original}. In this section, we aim to address this question.

We will begin by introducing the local error bound condition \cite{luo1993error}, which is commonly employed in the analysis of local linear convergence for gradient descent or proximal gradient descent methods \cite{karimi2016linear}.
First, we present a lemma that establishes the equivalence between two types of local error bound conditions. These conditions can be viewed as generalizations of the Luo-Tseng error bound \cite{luo1993error} for problems in the Euclidean space. 
\begin{lemma} \label{lem:eb}
    Denote $\mathcal{N}(\delta) =\{\bx: \|\bx - \bar{\bx}\| \leq \delta \}$. If there exists some positive constants $\delta$ and $c$ such that 
    \be \label{eq:eb1} \|\grad \varphi^t(\bx) \| \geq c \|\bx - \bar{\bx} \|, \quad \bx \in \mathcal{N}(\delta), \ee
    then it holds that for some positive constants $\hat{\delta}$ and $\hat{c}$,
    \be \label{eq:eb2} \|\bx - P_{\Mcal^N}(\bW^t \bx) \| \geq \hat{c} \|\bx - \bar{\bx} \|, \quad \bx \in \mathcal{N}(\hat{\delta}). \ee 
    In addition, the converse is also true.
\end{lemma}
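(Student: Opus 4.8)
The plan is to establish the equivalence by relating the Riemannian gradient $\grad \varphi^t(\bx)$ to the projected-gradient residual $\bx - P_{\Mcal^N}(\bW^t \bx)$ via the connection between retraction and projection already proved in Lemma \ref{lem:lip-proj}. The key observation is that the Riemannian gradient update $R_{\bx}(-\grad \varphi^t(\bx))$ and the projected-gradient step $P_{\Mcal^N}(\bx - \nabla \varphi^t(\bx)) = P_{\Mcal^N}(\bW^t\bx)$ differ only by a term of order $\|\nabla \varphi^t(\bx)\|^2$, and both residual quantities measure, to leading order, the displacement $\grad \varphi^t(\bx) = P_{T_{\bx}\Mcal^N}(\nabla \varphi^t(\bx))$. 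First I would record the two elementary identities that drive everything: $\nabla \varphi^t(\bx) = (I - \bW^t)\bx$, so that $\|\nabla \varphi^t(\bx)\| \le L_t \|\bx - \hat\bx\| \le L_t\|\bx - \bar\bx\|$ by Lemma \ref{lem:conseus-bound}, and $\grad \varphi^t(\bx) = P_{T_{\bx}\Mcal^N}(\nabla \varphi^t(\bx))$, so that $\|\grad \varphi^t(\bx)\| \le \|\nabla \varphi^t(\bx)\|$.

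For the forward direction, I would apply Lemma \ref{lem:lip-proj} with $x$ replaced by each $x_i$ and $u = -\nabla \varphi_i^t(\bx)$, giving
\[
\left\| P_{\Mcal^N}(\bx - \nabla \varphi^t(\bx)) - R_{\bx}(-\grad \varphi^t(\bx)) \right\| \le M_1 \|\nabla \varphi^t(\bx)\|^2 \le M_1 L_t^2 \|\bx - \bar\bx\|^2.
\]
Combining this with the retraction estimate $\|R_{\bx}(-\grad \varphi^t(\bx)) - \bx + \grad \varphi^t(\bx)\| \le M_0 \|\grad \varphi^t(\bx)\|^2$ from Proposition \ref{prop:lip-retr}, the triangle inequality yields
\[
\left| \|\bx - P_{\Mcal^N}(\bW^t\bx)\| - \|\grad \varphi^t(\bx)\| \right| \le (M_0 + M_1) L_t^2 \|\bx - \bar\bx\|^2.
\]
Thus on a sufficiently small neighborhood $\mathcal{N}(\hat\delta)$, the quadratic error term is dominated by a fraction of $\|\bx - \bar\bx\|$, and assumption \eqref{eq:eb1} transfers directly: $\|\bx - P_{\Mcal^N}(\bW^t\bx)\| \ge \|\grad \varphi^t(\bx)\| - (M_0+M_1)L_t^2\|\bx-\bar\bx\|^2 \ge (c - (M_0+M_1)L_t^2\hat\delta)\|\bx-\bar\bx\| =: \hat c \|\bx - \bar\bx\|$, choosing $\hat\delta$ small enough that $\hat c > 0$. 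The converse direction is symmetric: the same two-sided estimate gives $\|\grad \varphi^t(\bx)\| \ge \|\bx - P_{\Mcal^N}(\bW^t\bx)\| - (M_0+M_1)L_t^2\|\bx-\bar\bx\|^2$, and shrinking $\delta$ again absorbs the quadratic remainder, so \eqref{eq:eb2} implies \eqref{eq:eb1} with a fresh constant.

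The main obstacle I anticipate is bookkeeping the neighborhoods so that all the cited inequalities are legitimately available simultaneously — in particular ensuring $\bx \in \mathcal{N}(\delta)$ is small enough that $\hat x \in \bar U_{\Mcal}(\gamma)$ and each $\sum_j W_{ij}^t x_j \in \bar U_{\Mcal}(\gamma)$, so that Lemma \ref{lem:lip-proj}, the $2$-Lipschitz continuity of $P_{\Mcal}$, and Lemma \ref{lem:conseus-bound} all apply. Once the neighborhood radii are chosen to satisfy these proximal-smoothness constraints, the equivalence follows from the single quantitative fact that the two residuals agree up to a $\|\bx - \bar\bx\|^2$ term, which is negligible relative to the linear lower bound near the consensus set. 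The only subtlety worth flagging is that the constants $\delta, \hat\delta$ and $c, \hat c$ genuinely change between the two directions, which is why the statement asserts existence of \emph{some} constants rather than a sharp correspondence.
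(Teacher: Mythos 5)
Your proof is correct and takes essentially the same route as the paper: both reduce the equivalence to the single quadratic estimate $\|\bx - P_{\Mcal^N}(\bW^t \bx) - \grad \varphi^t(\bx)\| \leq Q\|\bx - \bar{\bx}\|^2$ and then absorb the $O(\|\bx-\bar{\bx}\|^2)$ remainder by shrinking the neighborhood so that the linear lower bound survives with a halved constant. The only cosmetic difference is that the paper cites this estimate directly from an external lemma, whereas you rederive it inside the paper's own toolkit by chaining Lemma \ref{lem:lip-proj} with Proposition \ref{prop:lip-retr} so that the intermediate retraction term cancels.
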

\begin{proof}
    It follows from \cite[Lemma 4.3]{chen2021local} that there exists a constant $Q > 0$ such that for all $\bx$,
    \[ \|\bx - P_{\Mcal^N}(\bW^t \bx) - \grad \varphi^t(\bx)\| \leq Q \|\bx - \bar{\bx} \|^2. \]
    This gives
    \[ \| \bx - P_{\Mcal^N}(\bW^t \bx) \| \geq \| \grad \varphi^t(\bx) \| - Q \|\bx - \bar{\bx} \|^2 \geq (c - Q \|\bx - \bar{\bx} \|) \|\bx - \bar{\bx} \|.  \]
    Then, taking $\hat{\delta} = \min\{ c/(2Q), \delta \}$, the inequality \eqref{eq:eb2} holds with $\hat{c} = c/2$. 
Conversely, if \eqref{eq:eb2} holds, then \eqref{eq:eb1} holds with $c=\hat{c}/2$ and $\delta = \min\{ \hat{c}/(2Q), \hat{\delta} \}$. 
\end{proof}

Next, we show the local Lipschitz continuity of $P_{\Mcal}$ as shown in \eqref{eq:lip-proj-alpha}  implies the satisfaction of the local error bound conditions  \eqref{eq:eb1} and \eqref{eq:eb2}. 
\begin{theorem} \label{lem:eb2}
    Let $\beta\in (0,2)$ be given. For the consensus problem \eqref{prob:original} with $t > \log_{\sigma_2} \frac{2-\beta}{2}$, the local error bound condition \eqref{eq:eb2} holds with $\hat{c} = (1 - 2 \sigma_2^t/(2-\beta))$ and $\hat{\delta} = \beta\gamma$.
\end{theorem}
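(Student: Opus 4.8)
The plan is to lower-bound the residual $\|\bx - P_{\Mcal^N}(\bW^t\bx)\|$ against the consensus error $\|\bx-\bar{\bx}\|$ via the triangle inequality, and then to control the resulting cross term using the local Lipschitz estimate \eqref{eq:lip-proj-alpha} together with the spectral contraction of $W^t$ already exploited in \eqref{eq:linear-ecu-lip}. First I would write
\[ \|\bx - P_{\Mcal^N}(\bW^t\bx)\| \geq \|\bx - \bar{\bx}\| - \|\bar{\bx} - P_{\Mcal^N}(\bW^t\bx)\|, \]
so that it suffices to prove $\|\bar{\bx} - P_{\Mcal^N}(\bW^t\bx)\| \leq \frac{2\sigma_2^t}{2-\beta}\|\bx-\bar{\bx}\|$. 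Since $\hat{c} = 1 - \frac{2\sigma_2^t}{2-\beta}$ is positive precisely when $\sigma_2^t < \frac{2-\beta}{2}$, and $\sigma_2 < 1$ forces the logarithm to flip the inequality into $t > \log_{\sigma_2}\frac{2-\beta}{2}$, this reduction already explains the threshold on $t$ in the statement.

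The core estimate is a blockwise application of \eqref{eq:lip-proj-alpha}. Recalling that $\bar{x} = P_{\Mcal}(\hat{x})$ and that the $i$-th block of $\bW^t\bx$ is $\sum_{j=1}^N W_{ij}^t x_j$, I would bound each component by
\[ \left\| P_{\Mcal}\Big(\sum_{j=1}^N W_{ij}^t x_j\Big) - \bar{x} \right\| = \left\| P_{\Mcal}\Big(\sum_{j=1}^N W_{ij}^t x_j\Big) - P_{\Mcal}(\hat{x}) \right\| \leq \frac{2}{2-\beta}\left\| \sum_{j=1}^N W_{ij}^t x_j - \hat{x} \right\|, \]
and then sum the squares over $i$ to obtain $\|\bar{\bx} - P_{\Mcal^N}(\bW^t\bx)\| \leq \frac{2}{2-\beta}\|\bW^t\bx - \hat{\bx}\|$. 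I would finish by reusing the spectral bound from \eqref{eq:linear-ecu-lip}: writing $\bW^t\bx - \hat{\bx} = \big((W^t - J)\otimes I_d\big)(\bx - \hat{\bx})$ and using $\|\bx - \hat{\bx}\| \leq \|\bx - \bar{\bx}\|$ (because $\hat{\bx}$ is the Euclidean projection onto the consensus subspace, which contains $\mathcal{X}\ni\bar{\bx}$), this gives $\|\bW^t\bx - \hat{\bx}\| \leq \sigma_2^t\|\bx - \bar{\bx}\|$. Combining the three displays yields the claimed inequality with $\hat{c} = 1 - \frac{2\sigma_2^t}{2-\beta}$ and $\hat{\delta} = \beta\gamma$.

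The main obstacle is the feasibility check required to invoke \eqref{eq:lip-proj-alpha}, since that Lipschitz estimate is only valid when both arguments of the projection lie in the tube $\bar{U}_{\Mcal}(\beta\gamma)$. For $\bx \in \mathcal{N}(\beta\gamma)$, I would verify this in two steps. For the Euclidean mean, using $\bar{x}\in\Mcal$ I would estimate ${\rm dist}(\hat{x},\Mcal) \leq \|\hat{x}-\bar{x}\| = \big\|\frac{1}{N}\sum_{i=1}^N(x_i-\bar{x})\big\| \leq \frac{1}{\sqrt{N}}\|\bx-\bar{\bx}\| \leq \beta\gamma$. For each mixed point, exploiting $\sum_{j=1}^N W_{ij}^t = 1$ so that $\sum_{j=1}^N W_{ij}^t x_j - \bar{x} = \sum_{j=1}^N W_{ij}^t(x_j - \bar{x})$, I would bound ${\rm dist}\big(\sum_{j=1}^N W_{ij}^t x_j, \Mcal\big) \leq \max_j\|x_j - \bar{x}\| = \|\bx-\bar{\bx}\|_{F,\infty} \leq \|\bx-\bar{\bx}\| \leq \beta\gamma$. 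These two inclusions place both projection arguments inside $\bar{U}_{\Mcal}(\beta\gamma)$, legitimizing the blockwise Lipschitz step and completing the argument.
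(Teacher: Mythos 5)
Your proof is correct and follows essentially the same route as the paper's: the triangle inequality, the blockwise Lipschitz bound $\|\bar{\bx} - P_{\Mcal^N}(\bW^t\bx)\| \leq \frac{2}{2-\beta}\|\bW^t\bx - \hat{\bx}\|$ via $\bar{x} = P_{\Mcal}(\hat{x})$ and \eqref{eq:lip-proj-alpha}, and the spectral contraction $\|\bW^t\bx - \hat{\bx}\| \leq \sigma_2^t\|\bx - \bar{\bx}\|$. Your explicit check that $\hat{x}$ and each $\sum_{j=1}^N W_{ij}^t x_j$ lie in $\bar{U}_{\Mcal}(\beta\gamma)$ is a detail the paper's proof leaves implicit, and it is a welcome addition.
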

\begin{proof}
    By \eqref{eq:lip-proj-alpha}, for any $\bx \in \mathcal{N}(\beta\gamma)$, 
    \[ \begin{aligned}
    \| \bx - P_{\Mcal^N}(\bW^t \bx) \| & \geq \| \bx - \bar{\bx}\| - \| \bar{\bx} -  P_{\Mcal^N}(\bW^t \bx) \|  \\
    & \geq \| \bx - \bar{\bx}\| - \frac{2}{2-\beta} \|\hat{\bx} - \bW^t \bx \| \\
    & \geq (1 - \frac{2\sigma_2^t}{2-\beta}) \| \bx - \bar{\bx} \|. 
    \end{aligned} \]
    By noting $t > \log_{\sigma_2} \frac{2-\beta}{2}$, we have that $\hat{c} > 0$. This completes the proof. 
\end{proof}

Furthermore, we can also derive a restricted secant inequality using the local Lipschitz continuity of $P_{\Mcal}$.
\begin{theorem} \label{thm:Lip-rsi}
    Let $\beta\in (0,2)$ be given. For the consensus problem \eqref{prob:original} with $t > \log_{\sigma_2} \frac{2-\beta}{2}$, the following restrict secant inequality holds,
    \be \label{eq:rsi-new} \iprod{\bx - \bar{\bx}}{\grad \varphi^t(\bx)} \geq c_r \| \bx - \bar{\bx} \|^2 \geq c_r/2 \| \grad \varphi^t(\bx) \|^2, \quad \forall x \in \mathcal{N}(\delta_r),\ee
    where $c_r = (4-(2-\beta)^2\sigma_2^{2t})/16 $ and $\delta_r= \min\{\beta \gamma, \left(4 - (2-\beta)^2\sigma_2^{2t} \right)/ (16Q) \}$. 
\end{theorem}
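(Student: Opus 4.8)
The plan is to route the whole argument through the projection residual $\bx - P_{\Mcal^N}(\bW^t\bx)$, which the local Lipschitz continuity \eqref{eq:lip-proj-alpha} already controls, and to transfer the resulting estimates to $\grad\varphi^t(\bx)$ only at the end, using the second-order comparison $\|\bx - P_{\Mcal^N}(\bW^t\bx) - \grad\varphi^t(\bx)\| \le Q\|\bx - \bar{\bx}\|^2$ (the bound from \cite[Lemma 4.3]{chen2021local} already invoked in Lemma \ref{lem:eb}). Writing $\bz := P_{\Mcal^N}(\bW^t\bx)$, I would first reproduce the two one-sided estimates underlying Theorem \ref{lem:eb2}: the error-bound lower bound $\|\bx - \bz\| \ge (1 - \tfrac{2\sigma_2^t}{2-\beta})\|\bx - \bar{\bx}\|$, and, using $\bar{\bx} = P_{\Mcal^N}(\hat{\bx})$ together with the identity $\hat{\bx} - \bW^t\bx = ((J - W^t)\otimes I_d)(\bx - \bar{\bx})$ and the spectral bound $\|(W^t - J)\otimes I_d\|_{\rm op} = \sigma_2^t$, the matching upper bound $\|\bar{\bx} - \bz\| \le \tfrac{2\sigma_2^t}{2-\beta}\|\bx - \bar{\bx}\|$. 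Both hold on $\mathcal{N}(\delta_r)$ since $\delta_r \le \beta\gamma$ keeps $\hat{\bx}$ and $\bW^t\bx$ inside $\bar{U}_{\Mcal}(\beta\gamma)$.

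To obtain a \emph{signed} lower bound rather than a mere norm bound, I would apply the polarization identity $\iprod{\bx - \bar{\bx}}{\bx - \bz} = \tfrac12\left(\|\bx - \bar{\bx}\|^2 + \|\bx - \bz\|^2 - \|\bar{\bx} - \bz\|^2\right)$ and substitute the two estimates above. The positive $\|\bx - \bz\|^2$ contribution and the subtracted $\|\bar{\bx} - \bz\|^2$ term combine into an explicit multiple of $\|\bx - \bar{\bx}\|^2$ that is quadratic in $\sigma_2^t$, which is the source of the term $(2-\beta)^2\sigma_2^{2t}$ appearing in $c_r$. Passing to the gradient through the comparison bound then gives $\iprod{\bx - \bar{\bx}}{\grad\varphi^t(\bx)} \ge (\text{that multiple})\,\|\bx-\bar{\bx}\|^2 - Q\|\bx-\bar{\bx}\|^3$, and the choice $\delta_r \le c_r/Q$ is calibrated precisely so that the cubic remainder is absorbed, leaving the clean constant $c_r$ throughout $\mathcal{N}(\delta_r)$. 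This establishes the first inequality in \eqref{eq:rsi-new}.

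For the second inequality $c_r\|\bx-\bar{\bx}\|^2 \ge \tfrac{c_r}{2}\|\grad\varphi^t(\bx)\|^2$, I would instead use an \emph{upper} control on the same residual: by the triangle inequality and Step 1, $\|\grad\varphi^t(\bx)\| \le \|\bx - \bz\| + Q\|\bx-\bar{\bx}\|^2 \le (1 + \tfrac{2\sigma_2^t}{2-\beta} + Q\delta_r)\|\bx - \bar{\bx}\|$. In the regime $t > \log_{\sigma_2}\tfrac{2-\beta}{2}$ this prefactor is strictly below $2$, and after squaring and combining with the already established first inequality one recovers the stated bound; this is the only place where an upper rather than lower estimate on the residual is required.

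The main obstacle is turning one-sided norm estimates into a signed inner-product bound of the exact form $c_r\|\bx-\bar{\bx}\|^2$: polarization is the right device, but it forces the simultaneous and sharp use of both the error-bound lower bound on $\|\bx-\bz\|$ and the Lipschitz/spectral upper bound on $\|\bar{\bx}-\bz\|$, and it requires verifying that the cubic remainder from the gradient--residual comparison is dominated uniformly on $\mathcal{N}(\delta_r)$ — which is exactly why the radius is tied to $c_r/Q$. Once these three ingredients (the comparison bound, the error bound, and the spectral estimate) are in place, the bookkeeping that produces the precise coefficient $\frac{4-(2-\beta)^2\sigma_2^{2t}}{16}$ and the matching $\delta_r$ is routine.
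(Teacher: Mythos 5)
Your proposal follows essentially the same route as the paper's proof: the same decomposition through the projection residual $\bx - P_{\Mcal^N}(\bW^t\bx)$, the same second-order comparison bound to pass to $\grad\varphi^t(\bx)$ with the cubic remainder absorbed by tying $\delta_r$ to $c_r/Q$, and the same Lipschitz/spectral estimate on $\|\bar{\bx} - P_{\Mcal^N}(\bW^t\bx)\|$; the only difference is that your polarization identity retains the positive term $\|\bx - P_{\Mcal^N}(\bW^t\bx)\|^2$ via the error bound of Theorem \ref{lem:eb2}, whereas the paper drops it using $\iprod{a}{b}\ge -(\|a\|^2+\|b\|^2)/2$, so your bookkeeping lands on a constant that is linear rather than quadratic in $\sigma_2^t$ --- a slightly stronger bound that still implies the stated one. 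One caveat: your final step for $c_r\|\bx-\bar{\bx}\|^2 \ge \tfrac{c_r}{2}\|\grad\varphi^t(\bx)\|^2$ requires the prefactor in $\|\grad\varphi^t(\bx)\|\le C\|\bx-\bar{\bx}\|$ to be at most $\sqrt{2}$ after squaring, not merely below $2$, but the paper's own proof has the identical slip (it invokes $\|\grad\varphi^t(\bx)\|\le 2\|\bx-\bar{\bx}\|$), so this is a shared, not a new, gap.
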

\begin{proof}
     Note that there exists a $Q > 0$ such that for any $\bx \in \mathcal{N}(\beta\gamma)$,  
     \[ \begin{aligned}
        & \iprod{\bx - \bar{\bx}}{\grad \varphi^t(\bx)} \\
        = & \iprod{\bx - \bar{\bx}}{\bx - P_{\Mcal}(\bW^t \bx) +  \grad \varphi^t(\bx) - \bx + P_{\Mcal}(\bW^t \bx)}   \\
        \geq & \iprod{\bx - \bar{\bx}}{\bx - P_{\Mcal}(\bW^t \bx)} - Q\| \nabla \varphi^t(\bx) \|^2 \|\bx - \bar{\bx}\| \\
        \geq & \iprod{\bx - \bar{\bx}}{\bx - \bar{\bx} + \bar{\bx} - P_{\Mcal}(\bW^t \bx)} -  4 Q \| \bx - \bar{\bx} \|^3 \\
        \geq & \| \bx - \bar{\bx}\|^2 - \frac{\|\bx - \bar{\bx} \|^2 + \| \bar{\bx} - P_{\Mcal}(W^t \bx) \|^2}{2} - 4 Q \| \bx - \bar{\bx} \|^3 \\
        \geq &  \left(\frac{1}{2} - \frac{(2-\beta)^2}{8}\sigma_2^{2t} \right) \| \bx - \bar{\bx} \|^2 - 4Q \| \bx - \bar{\bx}\|^3 \\
        \geq & \left(\frac{1}{4} - \frac{(2-\beta)^2}{16}\sigma_2^{2t} \right) \| \bx - \bar{\bx}  \|^2,
     \end{aligned} \]
     where the first inequality is from the Lipschitz-type inequality of $P_\Mcal$ \cite[Lemma 4.3]{deng2023decentralized}, the second inequality is due to the 2-Lipschitz continuity of $\nabla \varphi^t$, the third inequality comes from the basic inequality $\iprod{a}{b} \geq -(\|a\|^2 + \|b\|^2)/2$, the fourth inequality is from the 2-Lipschitz continuity of $P_{\Mcal}$, and we use $\|\bx - \bar{\bx}\| \leq \left(4 - (2-\beta)^2\sigma_2^{2t} \right)/ (16Q)$ in the last inequality. In addition, note that 
     \[ \|\grad \varphi^t(\bx)\| \leq \| \nabla \varphi^t(\bx) \| \leq 2 \| \bx - \bar{\bx} \|,  \]
     the restrict secant inequality \eqref{eq:rsi-new} holds. 
\end{proof}

We note that the above theorem relies on the condition $t > \log_{\sigma_2} (2-\beta)/2 > 1$, which is not necessary for Theorem \ref{thm:rsi}. This implies that the local Lipschitz continuity of $P_{\Mcal}$ can yield the restricted secant inequality if $t$ is sufficiently large. Additionally, based on Lemma \ref{lem:eb} and Theorem \ref{lem:eb2}, the local Lipschitz continuity also implies the local error bounds \eqref{eq:eb1} and \eqref{eq:eb2}. 
Furthermore, disregarding the discrepancy in the definition of the local neighborhood,  the restricted secant inequality \ref{eq:rsi-new} serves as a sufficient condition for the local error bound. 

\section{Numerical experiments}
In this section, we evaluate the performance of the Riemannian gradient descent (RGD) and the projected gradient descent (PGD) for solving the consensus problem \eqref{prob:original}.  For each algorithm, we randomly generate the initial value $\bx_0 \in \Mcal^N$, and we set the number of agents to be $N = 15$. For the comparisons, we report the consensus error and the norm  of the Riemannian gradient of $\varphi^t(\bx)$ for each algorithm, i.e., $\frac{1}{N}\|\bm{x} - \bar{\bm{x}}\|$ and $\|\grad \varphi^t(\bx)\|$. All algorithms are terminated either when the consensus error is less than or equal to $2\times 10^{-16}$, or when the number of iterations exceeds 1000.
\subsection{Consensus problem on the Stiefel manifold}
In this subsection, we evaluate the PGD and the RGD for solving the consensus problem \eqref{prob:original} on the Stiefel manifold, i.e., $\Mcal = \mathrm{St}(d,r):=\{x\in\mathbb{R}^{d\times r}:\;x^Tx = I_r\}$, where $d = 200$ and $r=2$.   For the RGD, we use the QR decomposition as the retraction. Additionally, we also test the RGD with the projection on ${\rm St}(d,r)$, i.e., the polar decomposition, as the retraction, which we refer to as PRGD.   

We evaluate the performance of different algorithms using three types of graphs, namely ``random'', ``star'', and ``cycle''. These graphs are generated according to \cite{shi2015extra}. In our experiments, we set  $\alpha = 1$ and $t=1$. The results, as depicted in Figure \ref{fig:diff-method}, include the consensus error and the norm of the Riemannian gradient under the different graph types. These results indicate that RGD and PRGD outperform PGD on different graphs. This can be attributed to the utilization of Riemannian gradients in RGD and PRGD, which enables them to leverage the underlying manifold structure. The overlapping trajectories of RGD and PRGD indicate that the choice of different contraction operators has a negligible impact on the performance of RGD.
Furthermore, it can be observed from the number of iterations that the performance obtained on the ``random'' graph is superior to those obtained on other graphs. This is attributed to the denser adjacency matrix $W$ in random graphs, albeit at the expense of increased communication costs. 
\begin{figure*}[!htb]
	\begin{center}
		\begin{minipage}[b]{0.32\linewidth}
			\centering
\centerline{\includegraphics[width=\linewidth]{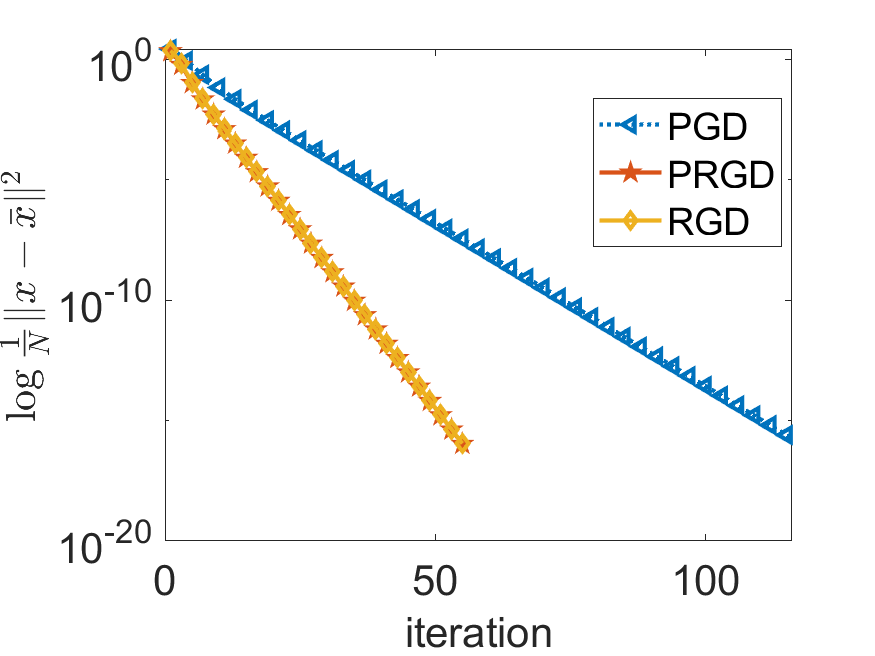}}
			\centerline{}\medskip
		\end{minipage}
		\begin{minipage}[b]{0.32\linewidth}
			\centering
			\centerline{\includegraphics[width=\linewidth]{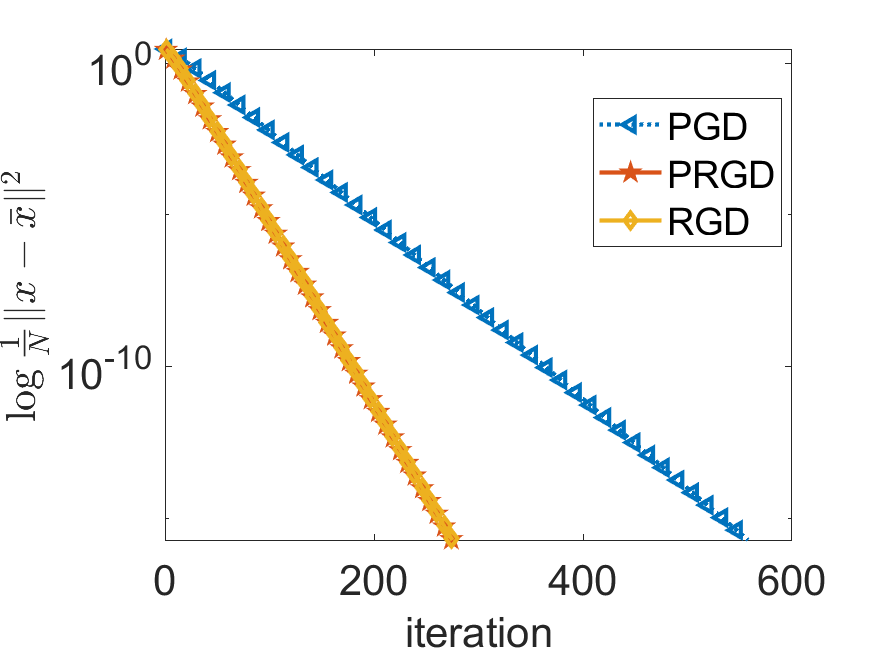}}
			\centerline{} \medskip
		\end{minipage}
  \begin{minipage}[b]{0.32\linewidth}
			\centering
			\centerline{\includegraphics[width=\linewidth]{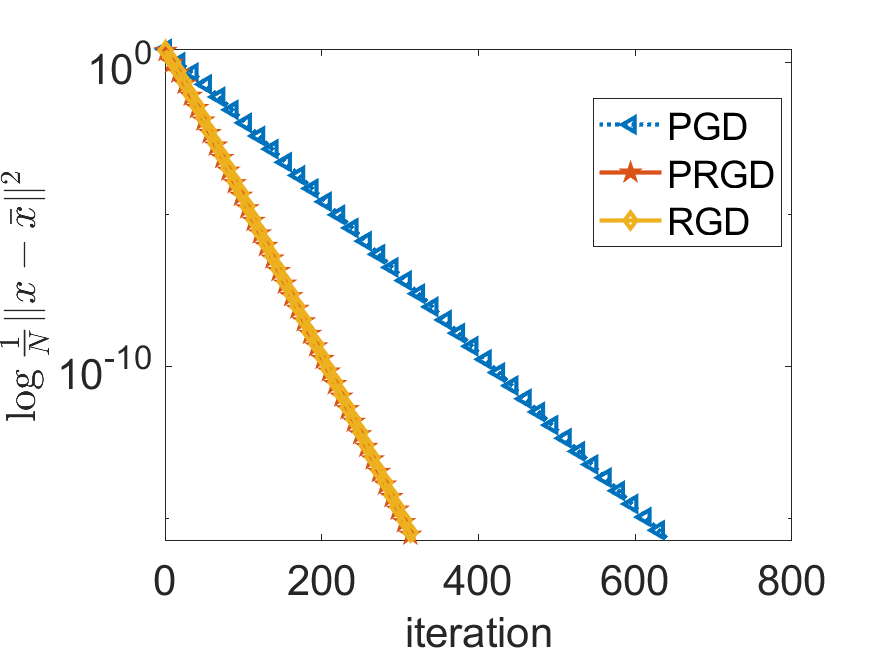}}
			\centerline{}\medskip
		\end{minipage}
  \\
  \begin{minipage}[b]{0.32\linewidth}
			\centering
			\centerline{\includegraphics[width=\linewidth]{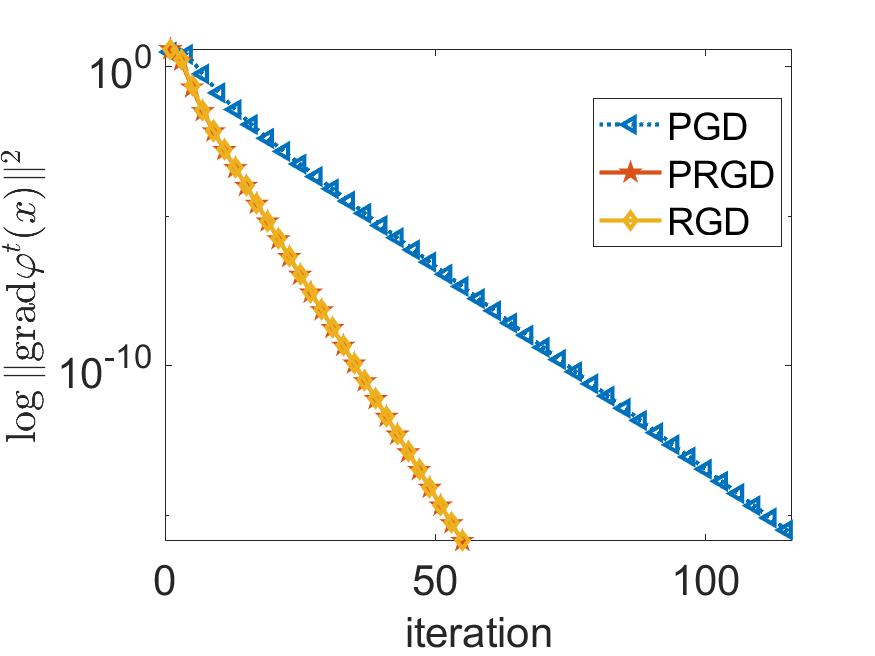}}
			\centerline{}\medskip
		\end{minipage}
    \begin{minipage}[b]{0.32\linewidth}
			\centering
			\centerline{\includegraphics[width=\linewidth]{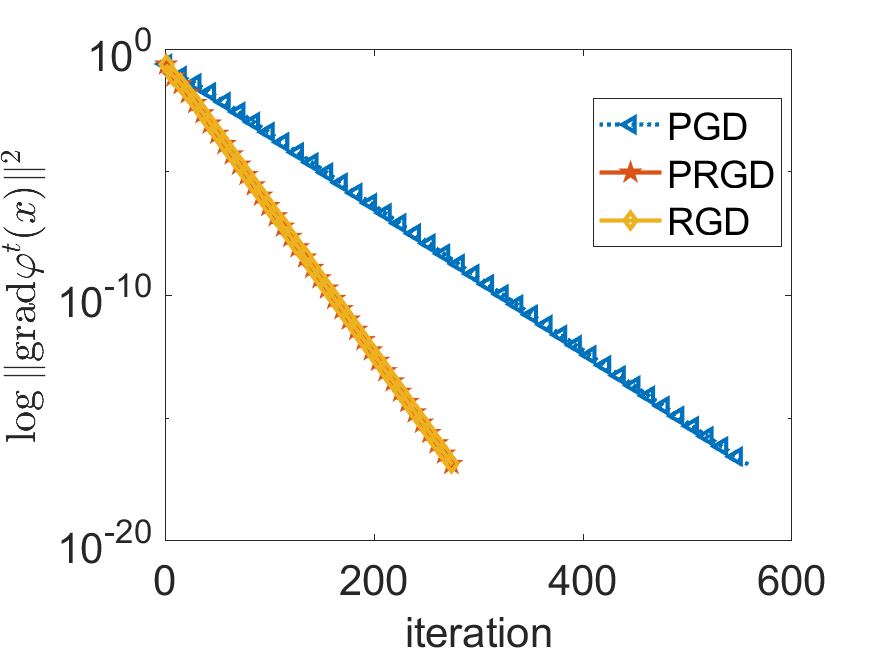}}
			\centerline{}\medskip
		\end{minipage}
    \begin{minipage}[b]{0.32\linewidth}
			\centering
			\centerline{\includegraphics[width=\linewidth]{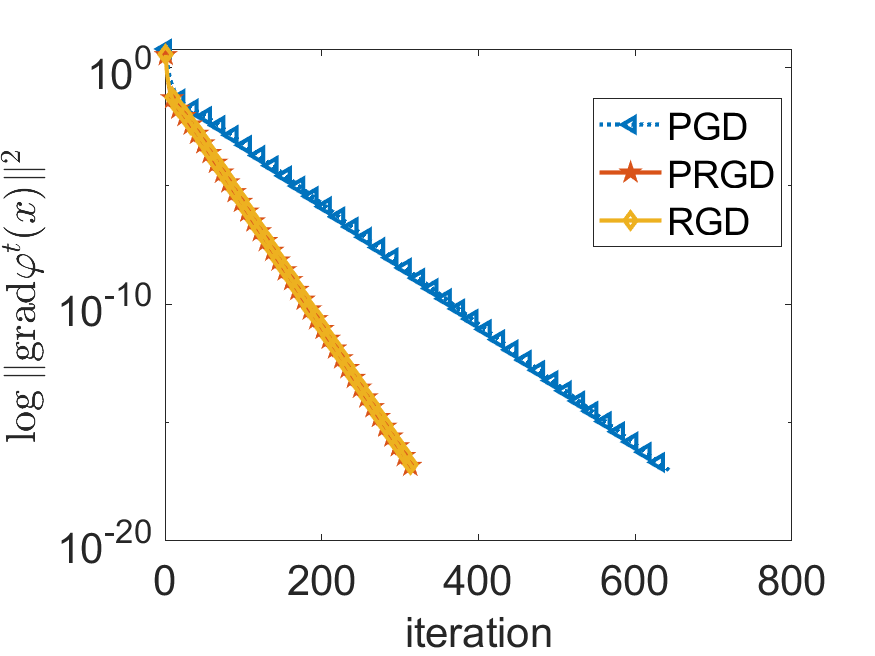}}
			\centerline{}\medskip
		\end{minipage}
	\end{center}
	\vskip -0.2in
	\caption{ Numerical results for solving consensus over the Stiefel manifold on different graphs with $\alpha = 1, t = 1$. Left:random; middle:star; right:cycle.}
	\label{fig:diff-method}
\end{figure*}

We also investigate the impact of different step sizes $\alpha$ and $t$ on the performance of the algorithms. Considering the similarity in performance among RGD with different retractions, we  test  PGD and  RGD with QR decomposition as the retraction. We consider two choices of step size: $\alpha=1,2/(L+\mu)$, where $L = 1-\lambda_{\min}(W)$ and $\mu$ denote the second-largest eigenvalue. Moreover, we also consider two choices of $t$: $t = 1, t=10$. Figures \ref{fig:diff-alpha-star} and \ref{fig:diff-alpha-cycle} depict the performance of these algorithms on the "cycle" and "star" graphs, respectively. For both figures, (a)(b) show the comparisons of different $\alpha$ with $t = 1$, (c)(d) show the comparisons of different $t$ with $\alpha = 1$.  Our observations indicate that the step size 
$\alpha = 2/(L+\mu)$ achieves better performance for both algorithms and graphs. It is worth emphasizing that obtaining the constants $L$ and $\mu$ is often challenging, especially in scenarios where the communication graph is dynamic. Setting $\alpha = 1$ is more practical if it is acceptable. Moreover, it is observed that both algorithms exhibit faster convergence rates when $t=10$ compared to when $t=1$, albeit at the cost of increased communication cost.

\begin{figure}[!htb]
    \centering
    \subfigure[]{
        \includegraphics[width=0.45\textwidth]{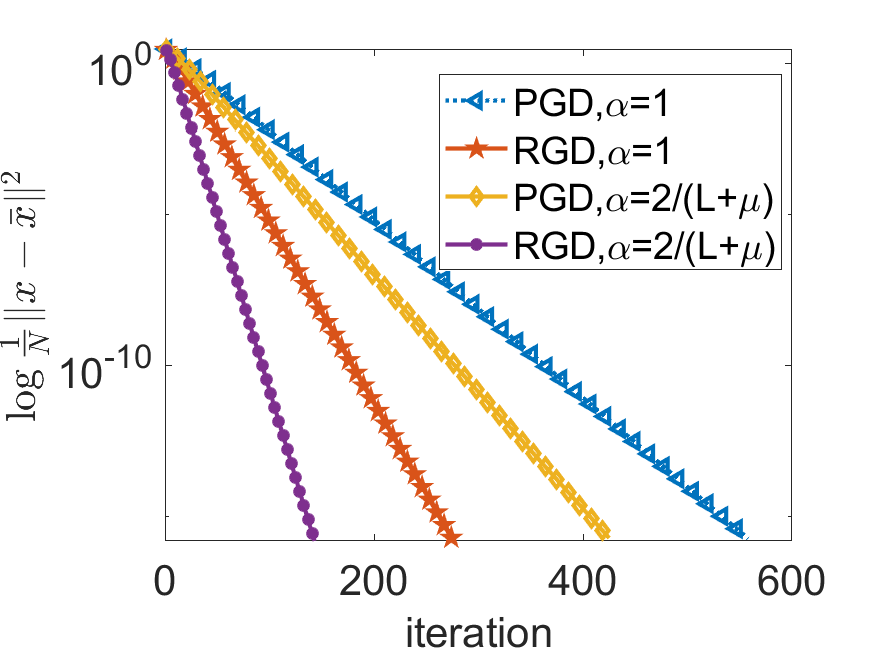}
    }
    \subfigure[]{
        \includegraphics[width=0.45\textwidth]{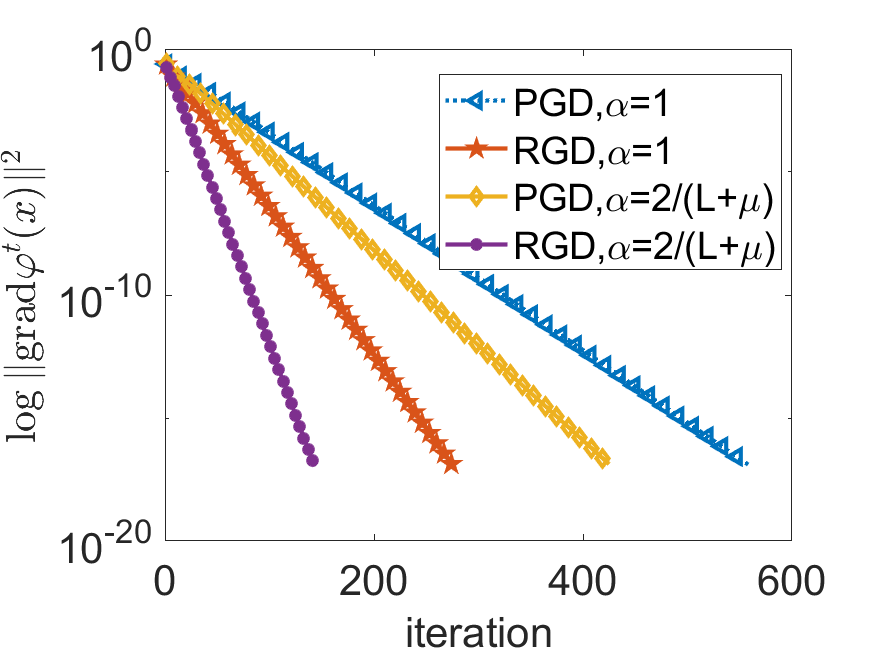}
    }
    \\
    
    \subfigure[]{
        \includegraphics[width=0.45\textwidth]{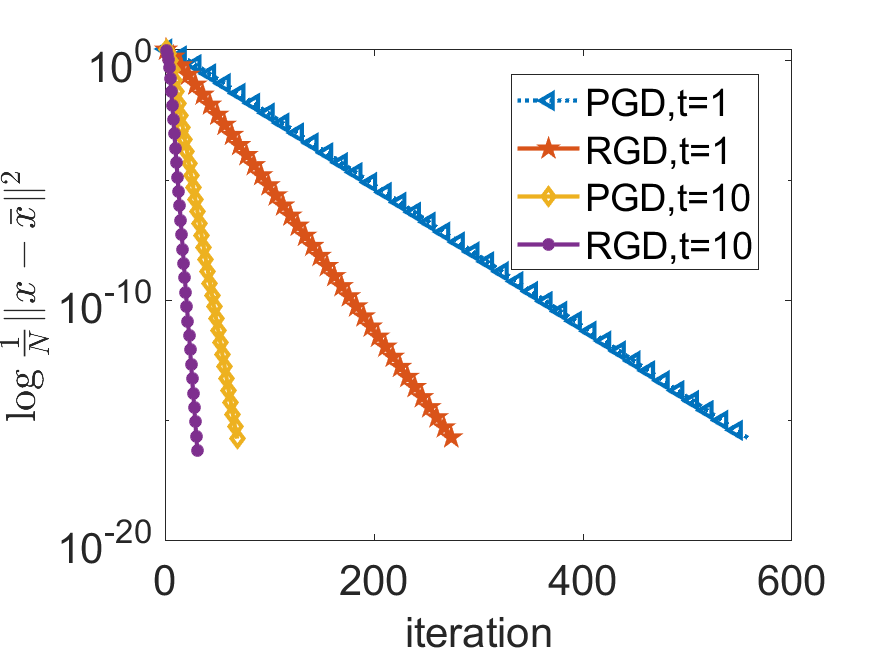}
    }
    \subfigure[]{
        \includegraphics[width=0.45\textwidth]{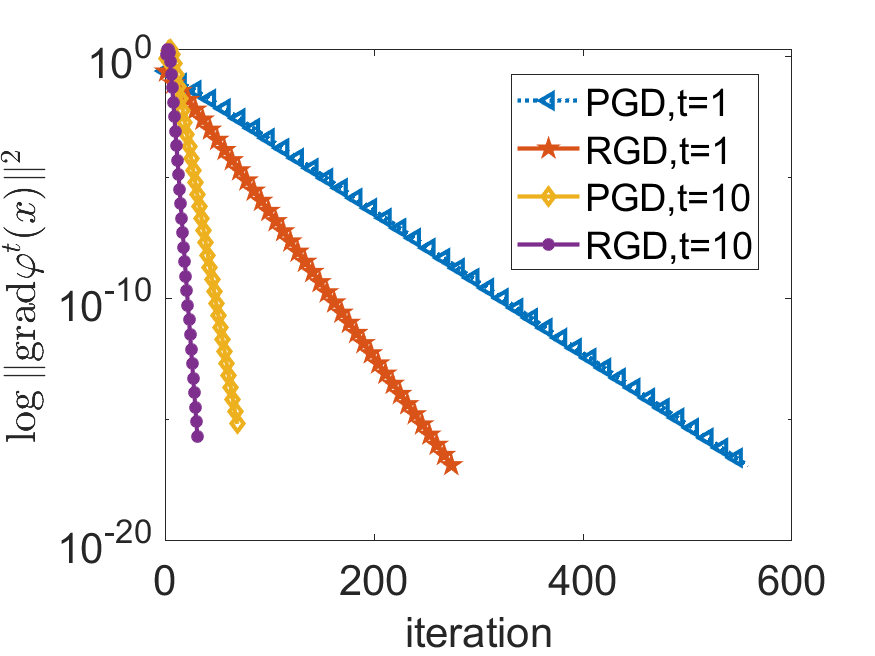}
    }
    \caption{Numerical results for solving consensus over the Stiefel manifold on ``star'' graph with different $\alpha$ and $t$. Top: fix $t=1$; botton: fix $\alpha = 1$.}
    \label{fig:diff-alpha-star}
\end{figure}

\begin{figure}[!htb]
    \centering
    \subfigure[]{
        \includegraphics[width=0.45\textwidth]{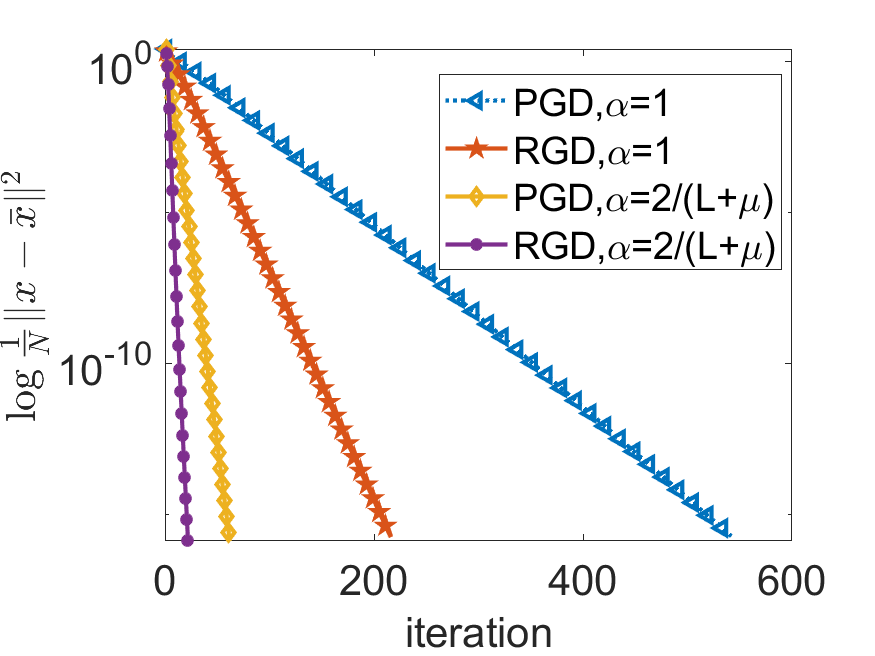}
    }
    \subfigure[]{
        \includegraphics[width=0.45\textwidth]{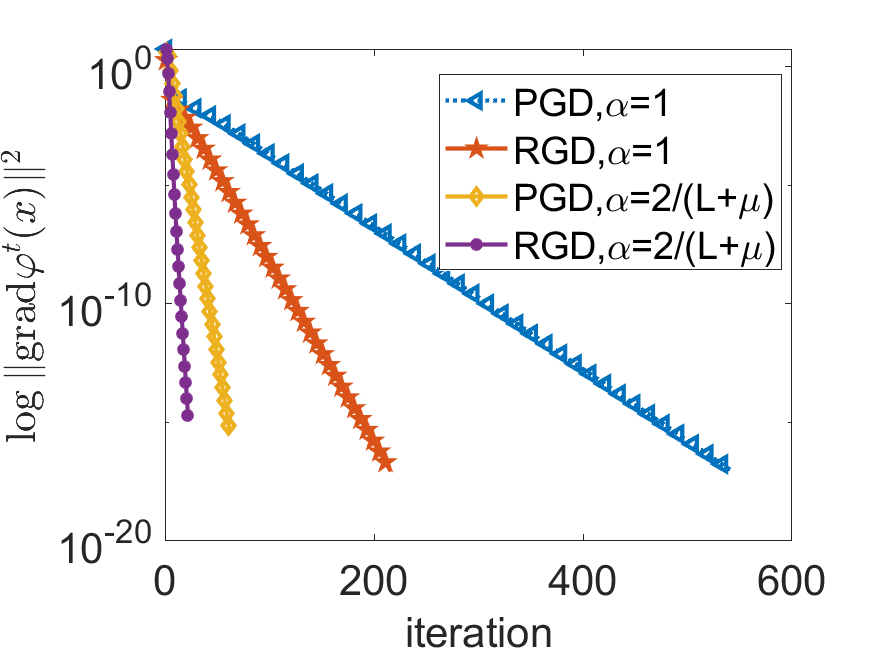}
    } \\
    
    \subfigure[]{
        \includegraphics[width=0.45\textwidth]{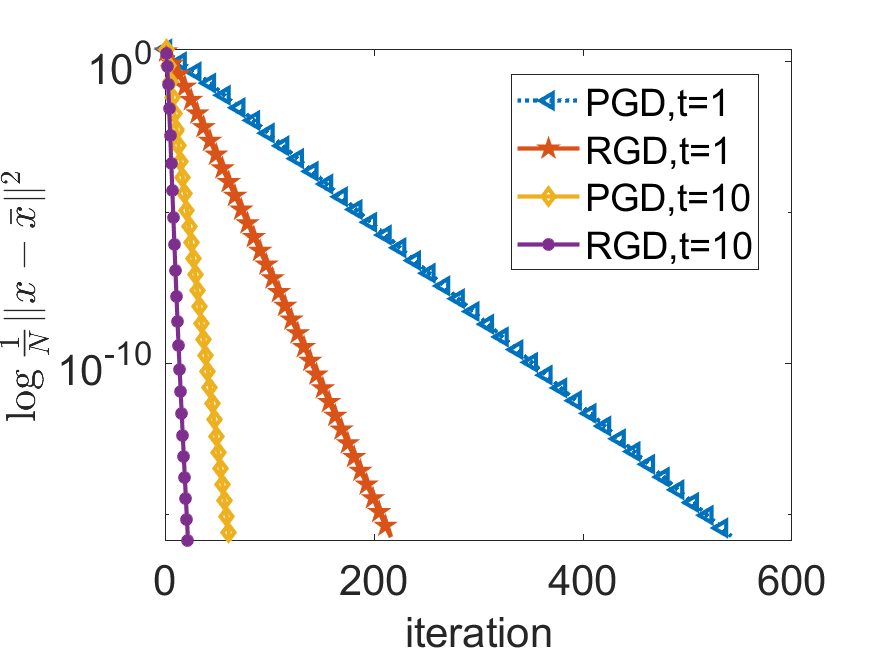}
    }
    \subfigure[]{
        \includegraphics[width=0.45\textwidth]{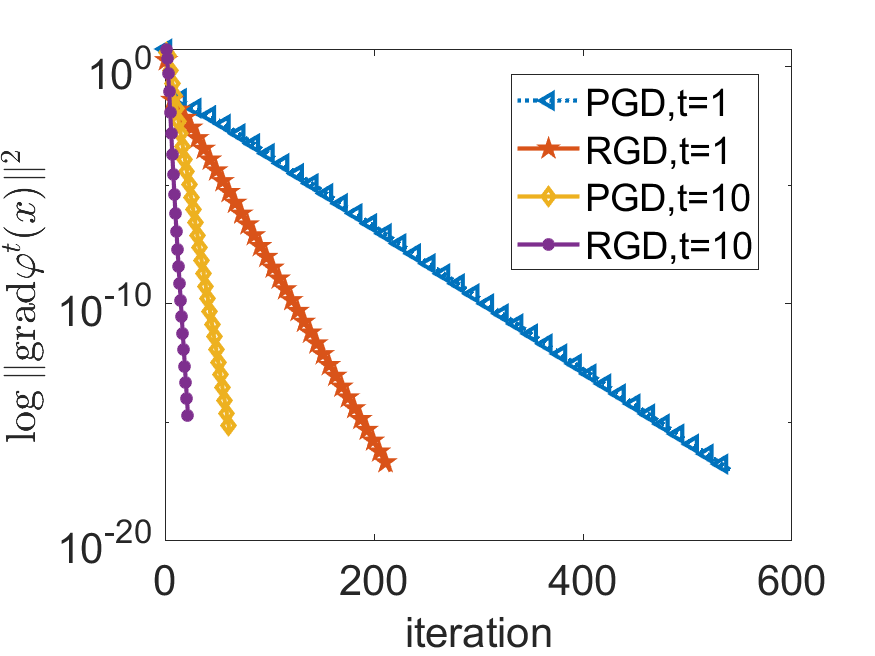}
    }
    \caption{Numerical results for solving consensus over the Stiefel manifold on ``cycle'' graph with different $\alpha$ and $t$. Top: fix $t=1$; bottom: fix $\alpha = 1$.}
    \label{fig:diff-alpha-cycle}
\end{figure}

\subsection{Consensus problem on the Oblique manifold}
In this subsection, we evaluate the performance of different algorithms for solving the consensus problem on the Oblique manifold, i.e., $\Mcal = \mathrm{Ob}(d,r):=\{x\in\mathbb{R}^{d\times r}:\mathrm{diag}(x^\top x)=\mathbf{1}_r\;\}$, where $d = 200$ and $r=5$, $\mathrm{diag}(X)$ is a vector whose $i$-th element is $X_{ii}$. Figures \ref{fig:diff-alpha-star-o} and \ref{fig:diff-alpha-cycle-o} show the performance of PGD and RGD on two graphs: ``star'', and ``cycle''.  The  results show that employing a step size of $\alpha = 2/(L+\mu)$ results in superior performance for both algorithms and graphs. In addition, both figures show that utilizing $t=10$ for the algorithms results in a faster convergence rate when compared to the cases where $t=1$. These findings are consistent with the results presented in the previous subsection.
\begin{figure}[!htb]
    \centering
    \subfigure[]{
        \includegraphics[width=0.45\textwidth]{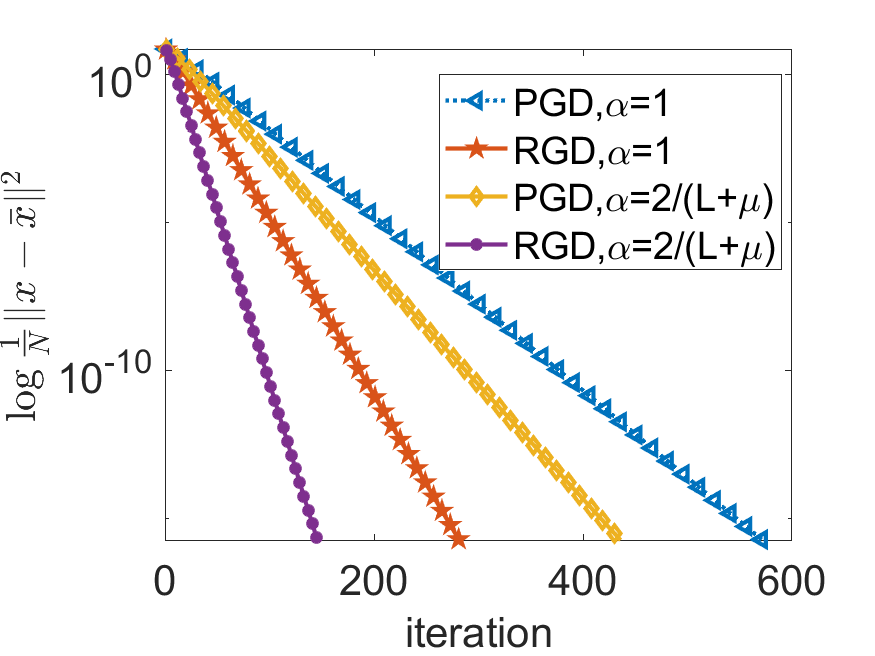}
    }
    \subfigure[]{
        \includegraphics[width=0.45\textwidth]{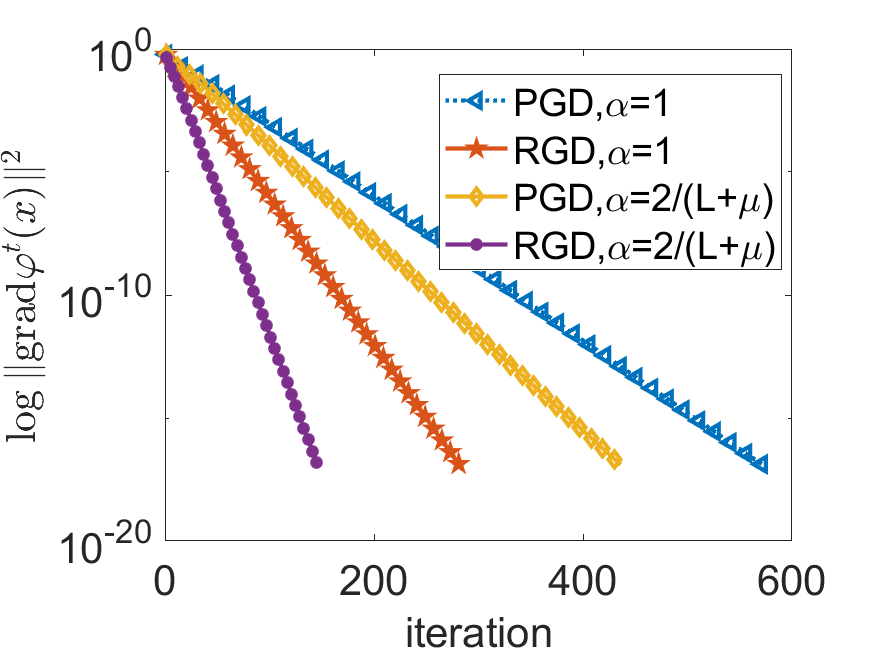}
    }\\
    \subfigure[]{
        \includegraphics[width=0.45\textwidth]{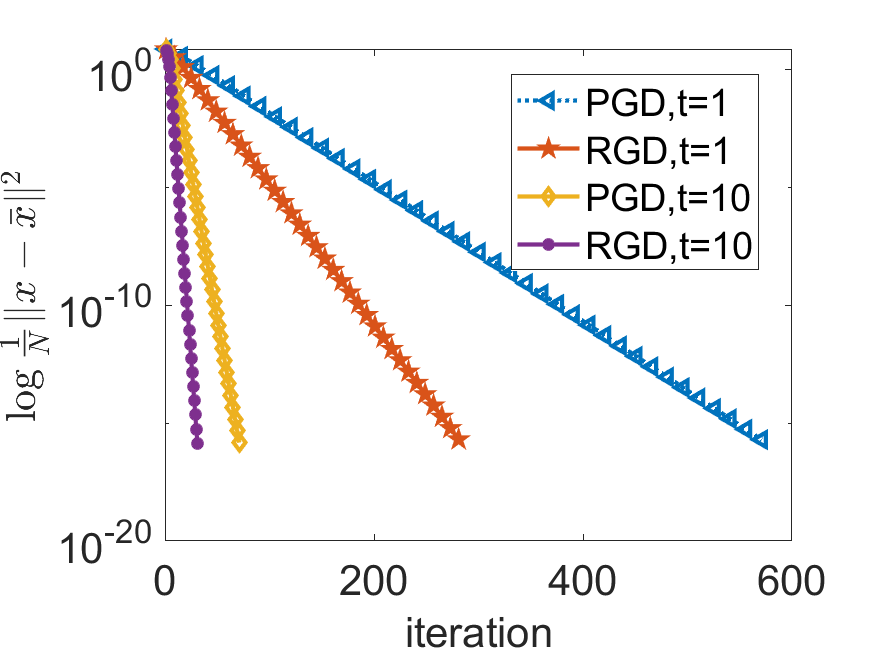}
    }
    \subfigure[]{
        \includegraphics[width=0.45\textwidth]{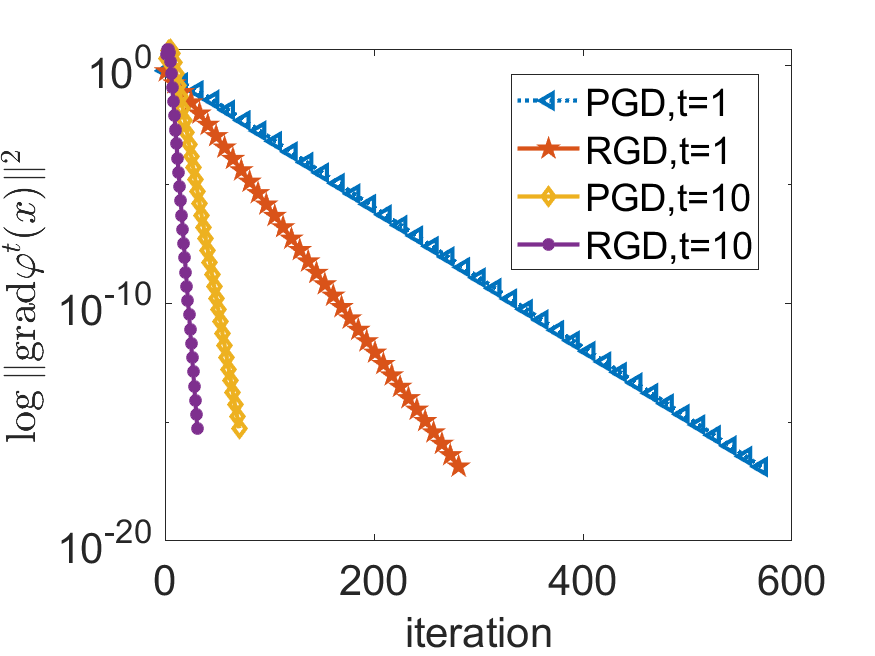}
    }
    \caption{Numerical results on ``star'' graph with different $\alpha$ and $t$. Top: fix $t=1$; bottom: fix $\alpha = 1$.}
    \label{fig:diff-alpha-star-o}
\end{figure}

\begin{figure}[!htb]
    \centering
    \subfigure[]{
        \includegraphics[width=0.45\textwidth]{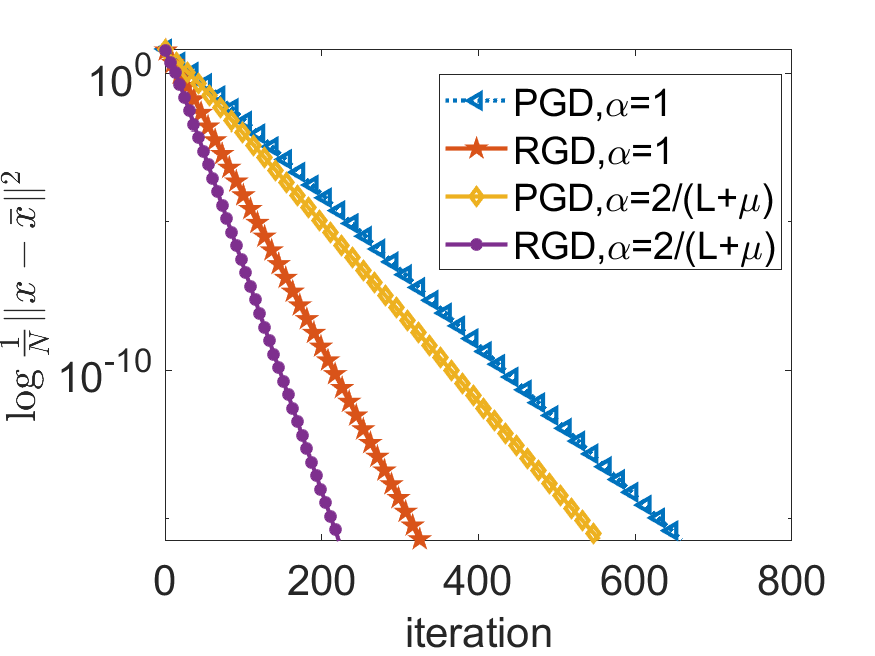}
    }
    \subfigure[]{
        \includegraphics[width=0.45\textwidth]{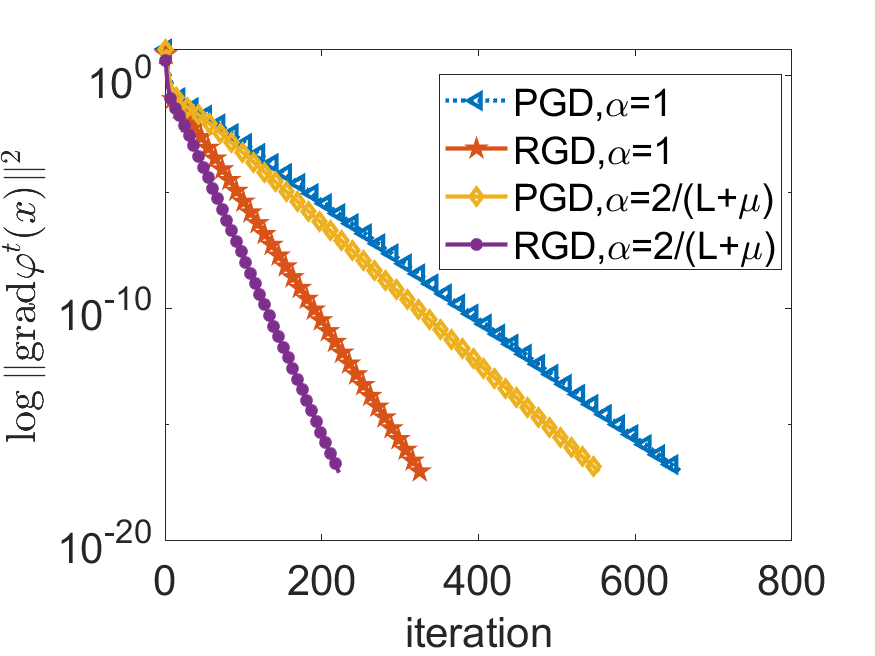}
    }\\
    \subfigure[]{
        \includegraphics[width=0.45\textwidth]{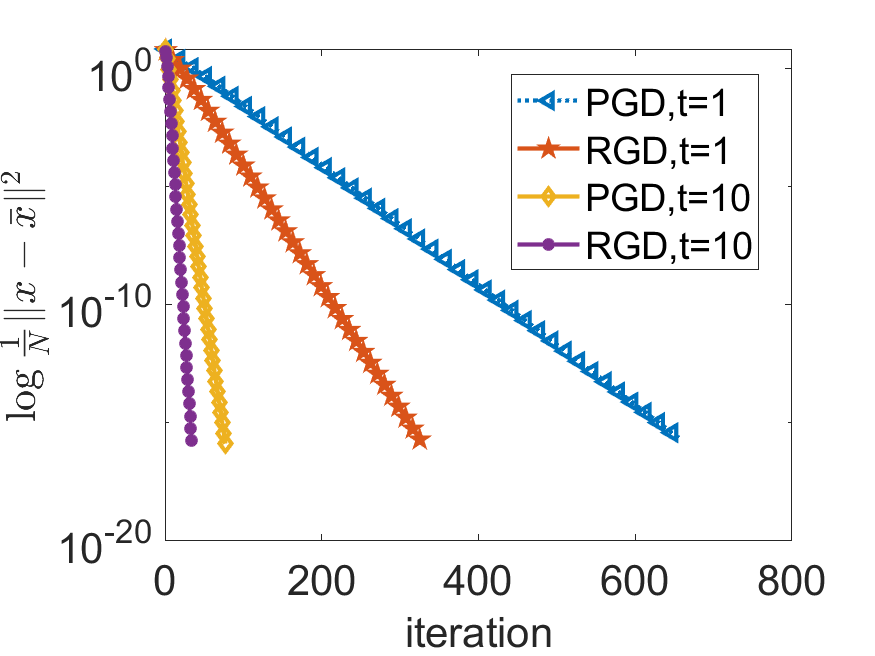}
    }
    \subfigure[]{
        \includegraphics[width=0.45\textwidth]{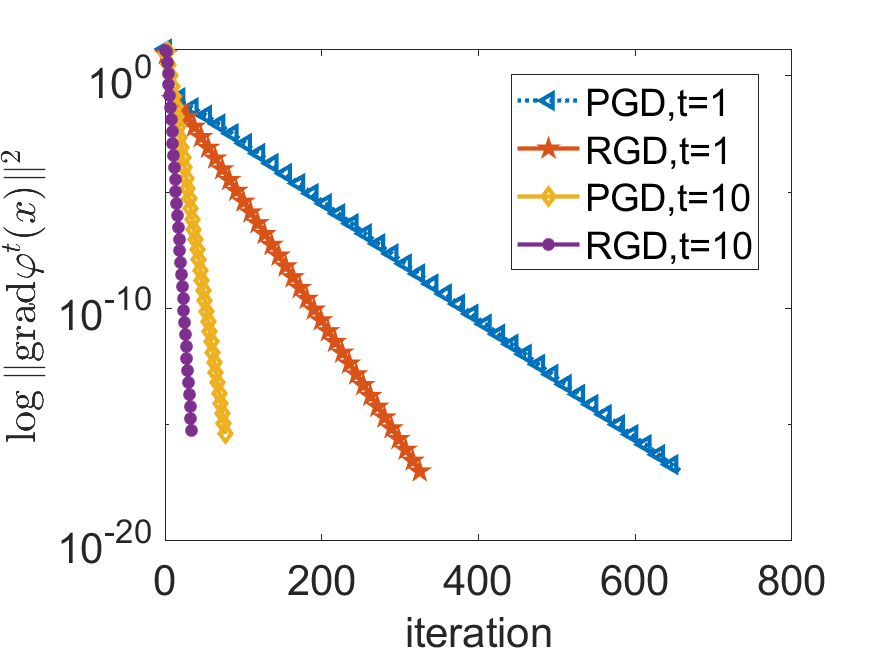}
    }
    \caption{Numerical results for solving consensus over the oblique manifold on ``cycle'' graph with different $\alpha$ and $t$. Top: fix $t=1$; bottom: fix $\alpha = 1$.}
    \label{fig:diff-alpha-cycle-o}
\end{figure}

\section{Conclusion}
In this paper, we present a demonstration of the effectiveness of Riemannian gradient descent in solving the consensus problem on a compact submanifold. By leveraging the geometric characteristics of the submanifold, we establish the presence of generalized convexity properties near the global optima, including the local Lipschitz continuity, the restricted secant inequality, and the local error bound, resulting in the linear convergence of the Riemannian gradient descent. The key tools  are the geometric properties of the tangent space and retraction operators, as well as the proximal smoothness of the compact submanifold. To validate our theoretical findings, we conduct numerical experiments that provide empirical evidence of the efficacy of our approach.

\bibliographystyle{siamplain}
\bibliography{ref}
\end{document}